%% file: HausTipSubmit.tex
\documentclass[11pt]{article}
\usepackage{amsfonts,amsmath,amsthm,amssymb}
\usepackage{latexsym}
\usepackage{graphicx}
\usepackage{color}
\usepackage{wasysym}
\usepackage[utf8]{inputenc}
\usepackage{xspace}

\author{Neil Dobbs\\
\small{School of Mathematics and Statistics}\\
\small{University College, Dublin}\\
\small{ Ireland}\\
\and
Jacek Graczyk\\
\small{Univ. de Paris-Sud}\\
\small{Lab. de Math\'ematiques}\\
\small{91405 Orsay, France}\\
\and
Nicolae Mihalache\\
\small{ Universit\'e Paris-Est Creteil, UMR 8050 CNRS, LAMA, F-94010 Creteil, France }\\
\small{ Universit\'e Gustave Eiffel, LAMA, F-77447 Marne-la-Vall\'ee, France}}

\newtheorem{lem}{Lemma}[section]

\newtheorem{remark}[lem]{Remark}
\newtheorem{theo}{Theorem}
\newtheorem{coro}[lem]{Corollary}
\newtheorem{prop}[lem]{Proposition}

\newtheorem{defi}[lem]{Definition}

\newcommand{\M}{{\cal M}}

\newcommand{\calD}{{\mathcal D}}
\newcommand{\cD}{{\mathcal D}}

\font\mathfonta=msam10 at 11pt
\font\mathfontb=msbm10 at 11pt
\def\Bbb#1{\mbox{\mathfontb #1}}
\def\lesssim{\mbox{\mathfonta.}}

\def\grtsim{\mbox{\mathfonta\&}}

\def\diam{{\mathrm{diam}\,}}
\def\Leb{{\mathrm{Leb}}}
\def\dist#1{{{\mathrm{dist}}\br{#1}}}

\def\eps{\epsilon}

\def\J{{\mathcal{J}}}

\def\cE{{\mathcal{E}}}
\def\cC{{\mathcal{C}}}
\def\vp{{\varphi}}

\def\C{\Bbb C}
\def\N{\Bbb N}
\def\Z{\Bbb Z}

\def\R{{\Bbb{R}}}
\def\br#1{\left(#1\right)}

\def\HD{{\mathrm{dim_H}}}

\def\Mis{\mathrm{Mis}}

\begin{document}
\title{Precise asymptotics at the tip of the Mandelbrot set}

\maketitle

\begin{abstract}
	For the quadratic family $f_c(z)=z^2+c$, the only parameters in the
	Mandelbrot set $\M$ for which the  Julia  set $\J_c$ has Hausdorff dimension $1$ 
	are $c=0$ and $c=-2$.  
	Near $c=0$, Ruelle's  theory gives a real-analytic expansion of the dimension.
	The  tip $c=-2$ of $\M$, however, is a non-hyperbolic parameter and the
	dimension function $c\mapsto \HD(\J_c)$ is highly discontinuous there. We prove the sharp first-order asymptotic for the lower envelope of the Hausdorff dimension at the tip: If $c\in \M $ then $\HD(\J_c)$ lies  asymptotically above   $1+ \Omega \sqrt{|c+2|}$ with the Jaksztas constant $\Omega =\sqrt{\frac{2}{3}}\frac{1}{\pi\log 2}$. 
This is a surprisingly   precise contribution to the Yoccoz problem  about unfolding attractors. 

The proof develops a thermodynamic formalism for degenerating families
of box mappings.  At each scale, for parameters $c\to -2$, the 
induced dynamics  exhibit  a uniform property
of exponential tails, generating improved control  of their pressure functions. 
\end{abstract}

\section{Introduction}
\paragraph{Yoccoz problem  and lower  dimension envelope.}
The Hausdorff dimension of Julia sets is one of the most sensitive
global parameters  in holomorphic dynamics.  For hyperbolic rational
maps, Bowen's formula identifies the Hausdorff dimension as the zero
of a pressure function, and Ruelle proved  \cite{Ruelle_1982} that this dimension varies real-analytically in analytic hyperbolic families.  These results belong
to the classical uniformly hyperbolic theory.  They do not, however,
address what happens when the family fails to be hyperbolic  and the thermodynamic formalism itself has to be rebuilt on scales
which move with the parameter.

This paper studies precisely such a singular degeneration.  We consider
the quadratic family
$ f_c(z)=z^2+c$
near the Chebyshev parameter $c=-2$.  At this parameter, the Julia set
is the interval $[-2,2]$. Zdunik's theorem~\cite{Zdunik_1990} shows that, together with $c=0$, this is the only parameter in the Mandelbrot set for which the connected Julia set has Hausdorff dimension $1$. As hyperbolic sets admit continuations, Zdunik's theorem gives even more, $\HD(\J_c)$ can approach the singular value $1$ only for $c\in \M$ that  approach $0$ or $-2$.
The two cases are of  different nature. 

Near $c=0$, the
Julia set remains a quasicircle and Ruelle's perturbative hyperbolic theory  \cite{Ruelle_1982}  gives
the expansion
$$\HD(\J_c)
=1+\frac{|c|^2}{4\log 2}+o(|c|^2).$$

The situation near $c=-2$ is  different due to  the lack of hyperbolicity. For example,  parameters $c \in \M$ with $\HD(\J_c) = 2$ are dense in $\partial \M$ and hence accumulate on $-2$ by Shishikura, \cite{Shishikura_1998}.
On the other hand, Graczyk and Smirnov \cite{Graczyk_2008} showed that, restricting to uniformly summable parameters, the Hausdorff dimension varies continuously. In particular, there are parameters approaching $-2$ for which $\HD(\J_c)$ approaches $1$.  Thus the Yoccoz conformal problem to understand unfolding of
$[-2,2]$ into  fractal Julia sets $\J_c$ 
through properties  of the dimension  function  $c\in \M\mapsto \HD(\J_c)$ 
does not belong to any known  perturbation theory.  One needs to extract the  lower envelope from  $c\in \M\mapsto \HD(\J_c)$ near a singular tip of $\M$ through a new construction.

In the recent work \cite{Dobbs_2022}, the following estimate was obtained for all real parameters. 
There exist $\Omega', \eps_0 >0$  such that for all $c \in [-2, -2+\eps_0)$,
$$\HD(\J_c)\geq 1+ \Omega' \sqrt{|2+c|}.$$


\subparagraph{Main results.}
It is natural to ask, in the context of the Yoccoz problem cf.~\cite{Dobbs_2022}, whether one can find an optimal constant for the lower  dimension envelope  near $c=-2$.  Jaksztas \cite{Jaksztas_2023} proposed the constant $\Omega$  
taking limits of his hyperbolic estimates outside $\M$,
$$\Omega:=\sqrt{\frac{2}{3}}\frac{1}{\pi\log 2}.$$
%
\begin{theo}\label{theo:optimal}
For every  $\theta>0$, 
\begin{equation}\label{eqn:opt}
	\liminf_{c\to -2} \frac{\HD(\J_c)-1}{\sqrt{|c+2|}} = \Omega,
\end{equation}
where the limit is taken over parameters $c$ in the cusp 
$$\{-2 + x + iy : x\geq 0, |y| \leq x^{1+\theta}\}.$$ 
\end{theo}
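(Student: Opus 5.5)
The statement splits into an upper bound and a lower bound on the $\liminf$. For the upper bound ($\le \Omega$) I will exhibit parameters in the cusp, approaching $-2$, along which $(\HD(\J_c)-1)/\sqrt{|c+2|}\to\Omega$. The natural choice is real parameters $c_n\in\M$ approaching $-2$ from the right. Here $c_n=-2+\eps_n$, chosen so that the critical orbit lands on a repelling periodic point or the $\beta$-fixed point. These are Misiurewicz parameters near the tip, where $f_c^k(0)$ stays near $-2$ for $k$ steps. Jaksztas \cite{Jaksztas_2023} computed for $c=-2-\delta$ (real, outside $\M$, hyperbolic) that $\HD(\J_c)=1+\Omega\sqrt{\delta}+o(\sqrt\delta)$. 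Using the continuity of dimension along uniformly summable (e.g.\ Misiurewicz with controlled growth) families \cite{Graczyk_2008}, I would compare the pressure of $c_n$ with that of nearby hyperbolic parameters. A cleaner alternative is to compute directly: for $c$ near $-2$, conjugate by the Chebyshev coordinate $z=2\cos(\pi t)$, in which $f_{-2}$ is the doubling map. The perturbation $\eps$ then appears as a localized distortion near the critical value. The induced first-return map to a neighbourhood of $\beta$ has a number of branches and escape structure that produce the $\sqrt{\eps}$ correction. Applying Bowen's formula to that induced map gives the upper bound along the chosen sequence.

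For the lower bound ($\ge\Omega$) over the whole cusp, which is the heart of the matter, I plan the following steps.

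\begin{enumerate}
\item Build, for each $c$ in the cusp close to $-2$, a box mapping $F_c$. It is a first-return/induced map on a scale $\asymp\sqrt{|c+2|}$ around the critical point, or its preimage near $\beta=$ the $\alpha$-side fixed point. Its branches are univalent with bounded distortion onto a fixed box.
\item Show the branches of $F_c$ have \emph{uniform exponential tails}. The measure, or sum of $|DF|^{-t}$, of branches with return time $\ge n$ decays like $e^{-\kappa n}$ with $\kappa$ independent of $c$. This is where the cusp condition $|y|\le x^{1+\theta}$ enters. It guarantees the critical orbit stays close to the real line long enough that the real Chebyshev model controls the combinatorics up to the relevant scale, with the imaginary drift only an $o(\sqrt{x})$ perturbation.
\item Define the induced pressure $P_c(t)=\lim \frac1n\log\sum |DF_c^n|^{-t}e^{-p\,\tau}$ with return time $\tau$. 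The dimension of the conical/hyperbolic part, hence a lower bound for $\HD(\J_c)$, is the zero $t_c$ of $P_c(t,p=0)$.
\item Perform a first-order expansion of $P_c$ at $t=1$, $p=0$. In the Chebyshev coordinate the limiting map has pressure zero at $t=1$. The scaled branches converge, after rescaling by $\sqrt{|c+2|}$, to an explicit limiting model (essentially an arcsine-density system around the critical value), which gives $\partial_t P\approx -\chi$, a Lyapunov exponent scaling like $\log 2$ times the mean return time. The defect $P_c(1)\approx$ the escaping mass, of order $\sqrt{|c+2|}$ with constant computed from the arcsine law and giving the $\pi$ and $\sqrt{2/3}$. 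Then $t_c-1\ge P_c(1)/\chi-o(\sqrt{|c+2|})$, which matches Jaksztas' computation of $\Omega$.
\end{enumerate}

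The main obstacle is step (2)/(4): obtaining tail and derivative estimates that are uniform in $c$ yet sharp to first order. Crude bounds only give some $\Omega'$, as in \cite{Dobbs_2022}. Error terms must be shown to be $o(\sqrt{|c+2|})$ uniformly over all (including non-summable, wild) parameters in the cusp. I would handle this by truncating the induced map at return times $N\asymp|c+2|^{-1/2}\log$. Exponential tails make the discarded part negligible. The truncated part is hyperbolic, so it depends analytically on $c$, and its pressure converges to that of the explicit limit model. A lower bound on dimension only needs a subsystem, so truncation costs nothing in that direction.
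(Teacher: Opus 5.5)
The overall architecture is the paper's: an induced repeller with uniform exponential tails, $P(1)$ of order $\sqrt{|\eps|}$, slope $\approx-\log 2$ times the mean return time, then the mean value theorem. The problem is that the steps carrying the content are missing or wrong.

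\emph{Lower bound.} Your step 1 asserts that, for every $c$ in the cusp, the induced map has univalent branches with bounded distortion onto a fixed box. This fails for a first-return map around the critical point, since the central branch has degree $2$. For non-Misiurewicz, possibly recurrent, parameters, control is lost exactly in the middle of the transverse cross. In the paper these are $V_N^\pm,V_{N+1}^\pm$, with return times $N+1,N+2$ and $N\approx\log_4|\eps|^{-1}$, whose extension domains may contain $0$.

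Truncating by return time does not address this. Keeping these branches leaves distortion uncontrolled; discarding them removes a definite proportion of the transverse family, and with it the sharp constant.

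The paper instead builds a uniformly extensible subsystem. It re-induces the boundary branches ($\phi_2$), then performs a postcritical filling giving $\phi_\infty$ whose extension domains avoid $\phi_0(0)$, so that $\phi_*=\phi_\infty\circ\phi_0$ is univalent even on the central domain, and finally sets $\vp=F_M\circ\phi_*$. Crucially, it bounds what was thrown away: Proposition~\ref{prop:linkage} shows the surviving transverse branches link the two points $f_c^{-1}(-p)$ with gaps summing to $O(|\eps|^{1/2+2\theta})=o(|\eps|^{1/2})$.

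That is where the cusp condition enters. The holomorphic-motion speed bounds (Corollary~\ref{cor:holospeed}, Lemma~\ref{lem:branchboundary}, Corollary~\ref{cor:bothsides}) produce a branch boundary point within $O(|\eps|^{1/2+2\theta})$ of $0$. The tail estimates do not use the cusp condition.

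You also misidentify the first-order term. It is not an escaping mass but an excess, $P_\eps(1)>0$. It comes from the transverse family near the imaginary axis, of total length $2|p+c|^{1/2}\approx 2\sqrt{2|\eps|/3}$. In the binary tree decomposition this adds $\frac{1}{|q_M|}\sqrt{2|\eps|/3}$ to the horizontal family's contribution $\approx 1$. The factor $\sqrt{2/3}$ comes from $p+c=\frac23\eps+O(\eps^2)$, and $\pi$ enters only through the slope.

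Finally, with a box shrinking like $\sqrt{|c+2|}$ there is no fixed limit system for the pressure and slope continuity arguments (Lemmas~\ref{lemPconverges} and~\ref{lemPslope}). The paper keeps $U_M$ of fixed size and lets $\eps\to0$, so that $\vp_\eps$ converges to the first return map of $f_{-2}$ to $U_M(-2)$; only then does it let $M\to\infty$.

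\emph{Upper bound.} Continuity at uniformly summable parameters carries no rate, so it cannot transfer a first-order term. The quoted expansion also cannot hold as written: for real $c<-2$ one has $\J_c\subset\R$, so $\HD(\J_c)<1$.

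The direct route needs the hypothesis the paper isolates: real parameters with a \emph{fixed} $\kappa$, $|f_c^n(c)|\ge\kappa$ for all $n$. This makes the first return map to $U_M$ extensible over $B(0,\kappa)$, with distortion tending to $1$ as $M\to\infty$. Via Lemma~\ref{lemQa} and Proposition~\ref{propBTD}, that yields $P(1)=\frac{1}{q_M}\sqrt{2\eps/3}$ up to factors tending to $1$ in the iterated limit. It also gives $\HD(\Lambda)=\HD(\J_c)$; without this, Bowen's formula for an induced subsystem bounds $\HD(\J_c)$ only from below.

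Landing on a repelling periodic orbit does not provide a uniform $\kappa$, and the paper stresses that this condition is essential. Moreover, for real $c\in(-2,0)$ the critical orbit stays in $[c,f_c(c)]\subset(-p,p)$, so it cannot land on $\beta=p$.
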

In the Mandelbrot set, parameters near $-2$ are contained in
 the cusp  provided $\theta< 1/2$, see Section~\ref{sec:ManBound}. 
Theorem~\ref{theo:optimal} shows that the dimension function $c\mapsto \HD(\J_c)$ at the tip $c=-2$ is governed by the square root law as opposed to
 Ruelle's quadratic law at $c=0$.

We also prove that the constant $\Omega$ is attained along dynamically natural
parameters on the real line.  For $\kappa>0$, let
$$\Mis_\kappa := \{c \in \M : 
\forall {n\geq 1}, ~~~ |f_c^n(c)|\geq \kappa\},$$
the set of $\kappa$-Misiurewicz quadratic parameters, and
$\Mis:=\bigcup_{\kappa >0}  \Mis_\kappa$.
For $c$ close to $-2$ we have the following sharp estimate.
\begin{theo}\label{theo:upper}
$$
\lim_{c \to -2, c \in \Mis_\kappa} \frac{\HD(\J_{{c}})-1}{\sqrt{c+2}} =\Omega.$$
\end{theo}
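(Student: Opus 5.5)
Since $\Mis_\kappa$ parameters near $-2$ lie in the cusp, Theorem~\ref{theo:optimal} already gives the lower bound: along $c\to-2$, $c\in\Mis_\kappa$,
$$\liminf \frac{\HD(\J_c)-1}{\sqrt{c+2}}\geq \Omega.$$
Note that $\Mis_\kappa\cap\M$ near $-2$ is real in the relevant regime, or at least lies in the cusp; we will restrict to real $c>-2$ and handle the cusp by the same argument. The whole task is therefore the matching upper bound
$$\limsup \frac{\HD(\J_c)-1}{\sqrt{c+2}}\leq \Omega.$$

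For $\kappa$-Misiurewicz maps the critical orbit stays a definite distance from the critical point. So $f_c$ is non-uniformly expanding with summable derivative growth along the critical orbit, in the Collet--Eckmann sense, with constants uniform in $c$. Consequently $\HD(\J_c)$ equals the first zero $t_c$ of the pressure function $P_c(t)$ of $-t\log|f_c'|$, and also the hyperbolic dimension. The plan is therefore to bound $P_c(t)$ from above.

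\textbf{Step 1: induced box mapping.} Use the box mapping built in the proof of Theorem~\ref{theo:optimal}: a first-return map $F_c$ to a neighbourhood of the critical region, with branches $\phi_{n}$ coded by the long passage time $n$ near the repelling fixed point $\beta_c\approx 2$, whose multiplier is $\approx 4$. The Misiurewicz condition makes the critical value's excursion bounded, so no extra deep returns occur. The dimension is the zero of $\sum_{\text{branches}} |\phi'|^{t}e^{-P n}$, and $t_c\le t$ iff this induced sum at $P=0$ is $\le 1$.

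\textbf{Step 2: upper bound on the sum.} With $\epsilon=\sqrt{c+2}$, the passage near $\beta$ is governed by rotation and twist in the complex linearization. The branches form a family whose contraction satisfies $|\phi_n'|\approx 4^{-n}$ times a factor oscillating with $n\epsilon$. This factor comes from $\log\cos$-type distortion, exactly as in Jaksztas' computation outside $\M$. Summing over $n$ via a Riemann sum in the variable $s=n\epsilon$ gives
$$\sum_n |\phi_n'|^{1+\delta}=1-\delta\,C_1+\epsilon^2 C_2/\delta+\dots$$
Setting this equal to $1$ yields $\delta\approx \sqrt{C_2/C_1}\,\epsilon=\Omega\epsilon$, which matches the constant $\frac{1}{\pi\log2}\sqrt{2/3}$: the $\pi^2/6$ comes from $\sum 1/k^2$, and $\log 2$ from the Lyapunov exponent.

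For the upper bound we need the error terms to be uniformly $o(\epsilon)$ in $\delta$. This uses Koebe distortion, which is uniform thanks to the definite Misiurewicz gap, together with the exponential tails of return times from the abstract's framework. With those in hand, $P_c(1+\delta)<0$ for $\delta>(\Omega+\eta)\epsilon$.

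\textbf{Main obstacle.} The hard step is showing that the tail $\sum_{n\gg 1/\epsilon}$ and the non-linearizable part contribute only $o(\epsilon)$ to the pressure zero from above. The lower bound can simply discard branches; the upper bound must account for all of them. I would first try to dominate the tail by the geometric bound $|\phi_n'|\le C4^{-n}$, uniform in $\kappa$. I would then show that the $\kappa$-condition prevents extra mass near the critical value, which would otherwise create a larger pressure.
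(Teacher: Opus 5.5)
There is a genuine gap. Step~2 carries the whole upper bound, but it rests on a mechanism that is not present at the tip and does not produce $\Omega$.

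The parameters concerned are real (Proposition~\ref{thmMisReal}), and the multiplier $2p\approx 4$ of $p$ is real, so there is no rotation or twist. By \eqref{eqn:epslog4} the time spent near $p$ is only $O(\log(1/|c+2|))$, so $n\sqrt{c+2}\to 0$. There is therefore no Riemann sum in $s=n\sqrt{c+2}$ with oscillating $\log\cos$ factors; that is the structure of a near-parabolic passage, not of a strongly repelling fixed point. Your expansion $1-\delta C_1+\epsilon^2C_2/\delta$ has the wrong form, $C_1,C_2$ are never computed, and tying $\pi^2/6$ to $\sum 1/k^2$ is not derived from anything.

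In the paper the first-order correction at $t=1$ is \emph{linear} in $\sqrt{c+2}$ and purely geometric. Split the branch domains of the full first return map $\vp$ to $U_M(c)$ into $D_a$ (meeting $\R$) and $D_b$ (meeting $i\R$).
The $D_a$ branches cover $(-q_M,q_M)$ up to measure zero. So Lemma~\ref{lemQa} bounds their $j$-fold sums at $t=1$ above and below by a distortion constant, uniformly in $j$, and their pressure at $t=1$ is zero.
The $D_b$ branches are pullbacks of the imaginary cross $f_c^{-1}([-p,c])$ of length $2\sqrt{2\eps/3}(1+O(\eps))$, contributing about $\sqrt{2\eps/3}/q_M$.
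Proposition~\ref{propBTD} with $\alpha_1=\alpha_2=1$ then gives $P(1)=\frac{1}{q_M}\sqrt{2\eps/3}$ up to factors $C^{\pm 3}$, where $C=\Delta_{\diam(U_M)/\kappa}$.
The slope is $-\frac{\pi\log 2}{q_M}$ up to $\delta_1(M)$ by Lemma~\ref{lemPslope} and \eqref{eqn:LE2}; this is where $\pi$ enters, through the Chebyshev density $1/(2\pi)$ at $0$.
The mean value theorem and $M\to\infty$ then give the upper and lower bounds at once.

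Several further points would also need repair.
\begin{itemize}
\item Your criterion that $t_c\le t$ iff the one-step induced sum is at most $1$ ignores distortion. The zero of the pressure must be located to within $o(\sqrt{|c+2|})$, but a fixed Koebe constant $\Delta$ only determines the pressure to within $\log\Delta$, which does not shrink as $c\to -2$. The binary tree decomposition fixes this: there $\Delta$ multiplies only the $O(\sqrt{|c+2|})$ imaginary term, and $\Delta\to 1$ only as $M\to\infty$ after $c\to-2$.
\item The box map from the proof of Theorem~\ref{theo:optimal} cannot give an upper bound. It discards branches (return times above $N$, the restriction $\phi_1$), so its limit set is merely a subset of $\J_c$. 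You need the full first return map to $U_M$. It is extensible over $B(0,\kappa)$ precisely because of the $\kappa$-Misiurewicz condition once $U_M\subset B(0,\kappa)$, which is what the paper relies on for $\HD(\Lambda)=\HD(\J_c)$.
\item Tails must be controlled through uniform exponential tails of sums of diameters (Lemma~\ref{lemFmTails}). A per-branch bound $|\phi_n'|\le C4^{-n}$ says nothing about how many branches there are.
\item You may quote only the $\geq$ half of Theorem~\ref{theo:optimal} (proved in \S\ref{sec:pres}), since its equality statement itself relies on Theorem~\ref{theo:upper}.
\end{itemize}
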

The fixed-$\kappa$ condition is essential:
if $\kappa$ goes to $0$  with $c+2$ then
renormalization technique and Shishikura's theorem yield parameters
arbitrarily close to $-2$ with $\HD(\J_c)$ close to $2$.

We prove Theorem~\ref{theo:upper}  only for real parameters. 
This suffices since  all $\kappa$-Misiurewicz parameters in a small enough neighbourhood of $-2$ are real. 
\begin{prop}\label{thmMisReal}
Given $\kappa>0$, there exists $\delta>0$ such that 
 $$\{c\in \M: |c+2|<\delta\}\cap  \Mis_\kappa \subset \R. $$ 
\end{prop}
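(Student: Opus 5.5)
We argue by contradiction, using the structure of $\M$ near its tip. Let $c\in\M$ with $|c+2|<\delta$ and $c\notin\R$. We want some $n\ge1$ with $|f_c^n(c)|<\kappa$, with $\delta$ depending only on $\kappa$.

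\emph{Step 1: limbs and small copies near $-2$.} Near $-2$, every non-real parameter of $\M$ lies in a limb or small Mandelbrot copy attached along the real spine, and these shrink as they approach the tip. Concretely, for $c\in\M$ close to $-2$ the critical orbit follows the real dynamics of $f_{-2}$ for a long time: $c\mapsto c^2+c\approx 2\mapsto\approx 2\mapsto\cdots$, escaping the neighbourhood of the repelling fixed point $\beta_c\approx 2$ only after about $N\approx \log(1/|c+2|)/\log 4$ iterates. We track $\zeta_n=f_c^n(c)-\beta_c$. Since $f_c'(\beta_c)=2\beta_c\approx 4$, we have $\zeta_n\approx 4^{n-1}\zeta_1$ with $\zeta_1=c^2+c-\beta_c=O(|c+2|)$. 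Moreover $\arg\zeta_n$ is essentially preserved, up to a small drift coming from $\arg\beta_c$, which is $O(|c+2|)$ in the cusp. So at the exit time $N$, the point $f_c^N(c)$ lies at distance of order $1$ from $\beta_c$, with argument close to $\arg(c+2)$ up to a small error.

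\emph{Step 2: using non-reality.} There are two cases.
\begin{itemize}
\item If $|\arg(c+2)|$ is bounded below, then $f_c^N(c)$ lands, after $O(1)$ further steps, at a point whose distance from $\J_{-2}=[-2,2]$ is bounded below. There the orbit escapes unless it is close to $0$; escape contradicts $c\in\M$.
\item If $\arg(c+2)$ is small, we need a finer argument. Use the Yoccoz puzzle or the external rays landing at the preimages of $\beta$ on the real line that accumulate at $-2$. These are the rays of angles $1/3\cdot 2^{-k}$-type landing at the points of $f_{-2}^{-k}(-2)$ near $-2$, together with the parameter rays of $\M$ at angles accumulating to $1/2$. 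Any non-real $c\in\M$ near $-2$ lies in a wake $W$ cut off by two parameter rays landing at a real Misiurewicz (or parabolic) point $c_0$, in the subwake not containing the real segment. For $c$ in such a wake, the critical value lies in a dynamical puzzle piece bounded by rays landing at a preimage of $\beta_c$ or $\alpha_c$. Its orbit therefore enters, at a bounded time $m$ after $N$, a puzzle piece around $0$ of depth comparable to $\log|c+2|$.
\end{itemize}
The main obstacle is this second case: quantifying that the off-real wakes force the critical orbit into a small neighbourhood of $0$.

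\emph{Step 3: making the second case quantitative.} Here I would use the small-copy/limb geometry. A non-real point of $\M$ in a wake attached at a real Misiurewicz point $c_0=f^{-k}$-type with $k$ large must lie in a limb $L$ whose critical orbit enters the critical puzzle piece $P_j(0)$, and the diameter of $P_j(0)$ tends to $0$ as $j\to\infty$ uniformly in the parameter (by Yoccoz/Lyubich shrinking, or directly because the relevant maps are small perturbations of Chebyshev pieces). As $c\to-2$ the depth $j$ goes to infinity, so some $|f_c^n(c)|\le\operatorname{diam}P_j(0)<\kappa$. This gives $c\notin\Mis_\kappa$ and fixes $\delta$.

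Finally, compactness handles the intermediate scales: the set $\Mis_\kappa$ is closed, and its non-real part cannot accumulate at $-2$ by the above. Hence a single $\delta$ works.
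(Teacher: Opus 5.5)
There is a genuine gap. The decisive claim is that a non-real $c\in\M$ near $-2$ has a critical orbit entering a fixed small neighbourhood of $0$. In Steps~2--3 this is asserted rather than proved, and the combinatorial picture you use to support it is incorrect.

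\begin{itemize}
\item Case~1 of Step~2 is only the cusp bound of \S\ref{sec:ManBound}. It removes nothing relevant, since parameters of $\M$ near $-2$ already lie in a thin cusp with $|\Im c|=O(|c+2|^{3/2})$. Everything therefore rests on Case~2.
\item In Case~2 you place a non-real $c$ in a wake bounded by two parameter rays landing at a real Misiurewicz or parabolic parameter $c_0$, ``in the subwake not containing the real segment''. But such a $c_0$ is the landing point of a single conjugate pair of parameter rays. (A real repelling periodic point of a real quadratic map carries at most two rays, by conjugation symmetry and Goldberg--Milnor.) So the only wake $c_0$ cuts off contains $[-2,c_0)$, and there is no non-real subwake attached there. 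Non-real pieces of $\M$ near the tip are instead attached at non-real boundary points of real hyperbolic components, so they sit inside small copies of $\M$ of large period.
\item The claim that the orbit enters the central piece ``at a bounded time $m$ after $N$'' is also false. Real $\kappa$-Misiurewicz parameters are accumulated by small copies whose critical orbits shadow a $\kappa$-avoiding orbit for arbitrarily long before returning near $0$.
\item Step~3 then simply states that the critical orbit of a parameter in such a limb enters $P_j(0)$ with $j\to\infty$. That is the proposition itself.
\end{itemize}

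The missing ingredient is a transversality statement: the parameter must move faster than the set of $\kappa$-avoiding points. The paper gets this in a few lines.
\begin{itemize}
\item Choose $M$ with $U_M(c)\subset B(0,\kappa)$. Then any $\kappa$-Misiurewicz $c$ lies in $K_M(c)$, the hyperbolic set of points of $\J_c$ never entering $U_M(c)$.
\item This set moves holomorphically, $K_M(c)=h(c,K_M(-2))$, it is real for real $c$, and its speed is $\partial_1 h=-T_\infty$.
\item Near $\pm 2$, Lemma~\ref{lem:thirdquarter} together with an exponentially small tail gives $|T_\infty|<1/2$ (the true value is about $1/3$).
\item Integrating along the vertical segment from $-2+x$ to $c=-2+x+iy$ shows that points of $K_M(c)$ near $-2$ have imaginary part at most $|y|/2$. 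Since $\Im c=y$, it follows that $c\notin K_M(c)$ whenever $y\neq 0$.
\end{itemize}

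A small-copy argument could in principle replace this. It would need deep input: that every non-real point of $\M$ near $-2$ lies in a small copy of large period, which amounts to triviality of fibres at real parameters. It would also need the small filled Julia sets around $0$ to shrink as $c\to-2$. Your proposal establishes neither.
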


\paragraph{Heuristic  meaning of $\Omega$.}
The constant $\Omega$ has a simple geometric origin.  Write
$\epsilon=c+2$, with $\epsilon>0$ real.  The fixed point near $c=2$ is
$$ p(c)=2-\frac{\epsilon}{3}+O(\epsilon^2).$$
Pulling back the interval between $-p(c)$ and $c$ produces, through the
critical point, a transverse \emph{cross} of length
$$2\sqrt{\frac{2\epsilon}{3}}+O(\epsilon).$$
The factor $1/(2\pi)$ is the density at the critical point of the
absolutely continuous invariant probability measure of the Chebyshev
map $f_{-2}$, and $\log 2$ is its Lyapunov exponent.  The dimension
increase is therefore obtained by balancing the frequency of visits to
the critical scale with the size of the transverse crosses through $\J_c$.
This gives
$$\Omega= \sqrt{\frac{2}{3}}\frac{1}{\pi\log 2}.$$

For complex parameters in the cusp, the cross deforms and may separate, but
the separation is of higher order than $|\eps|^{1/2}$.
\paragraph{Uniform thermodynamical formalism for singular  families.}
The main difficulty is to translate heuristic ideas  about $\Omega$ into a uniform
thermodynamic formalism.  The usual hyperbolic perturbation theory does
not apply near $c=-2$.  The relevant branches of the dynamics are not
described by a fixed Markov partition, and the induced maps which capture
the geometry of $\J_c$ have combinatorics which degenerate as
$\epsilon=c+2\to 0$.  In particular, the number and depth of the essential
branches grow  as the parameter $c$ approaches the tip. 

Our principal technical contribution is building a thermodynamic formalism for
this degenerating family of induced box mappings.  For each small
parameter $\epsilon$ we construct a generalized Cantor repeller
$$ \phi_\epsilon:\bigcup_j D_{j,\epsilon}\longrightarrow D_{\epsilon}$$
obtained from first-entry and first-return map to dynamically defined
Yoccoz pieces near the critical point.  Although the family is not
compact, 
the induced box maps have
uniform exponential tails and are extensible to a common larger domain.
This leads to  a family-level no-escape mechanism: the  pressure and Lyapunov mass cannot  escape  into increasingly remote branches when $c\mapsto -2$.
Consequently, the systems behave similarly to compact ones obtained through truncation to a large but finite number of branches. 
 The discarded branches
have a uniformly negligible pressure contribution. 
This allows us to compare the corresponding  pressure functions
$ P_\epsilon(t) $ uniformly for $t$ in a neighbourhood of $t=1$,
$$    P_\eps(t)\longrightarrow P_0(t).$$
However, the comparison is imprecise and depends on the inducing domain. Letting the depth of the inducing domain increase generates a sequence of families of induced maps for which we obtain increasingly accurate estimates. 
This gives us the
control needed to locate the zero of $P_\epsilon(t)$ up to the first order.


\def\svgwidth{\textwidth}
\begin{figure}
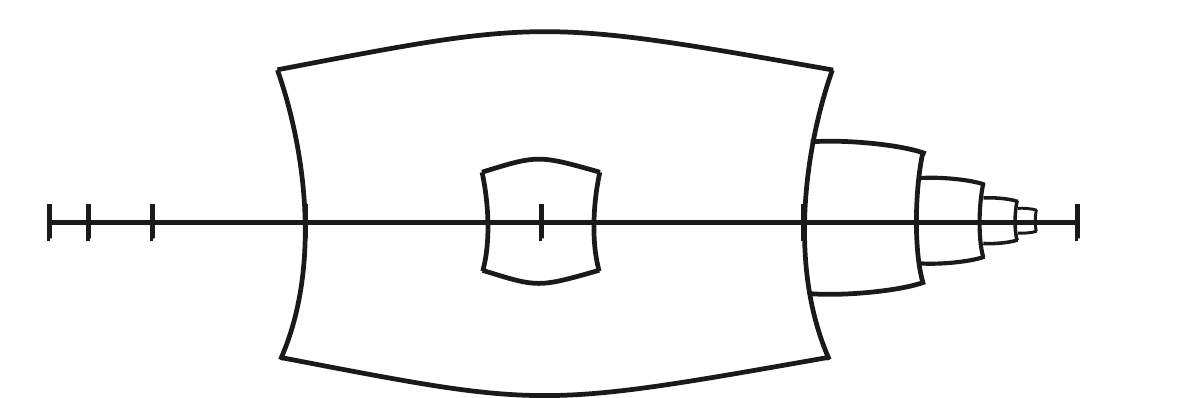
    \caption{For real parameters $c = -2+\eps$, the fixed points $p \approx 2 - \frac{\eps}3$ and $q \approx -1 + \frac{\eps}3$ are real. The fundamental domain $U$, defined in \S\ref{sec:funddom}, is constructed using dynamical rays and arcs of an ellipse, as is the deep inducing domain $U_M$ defined in \S\ref{secumqm}. As $M\to \infty$, the diameter of $U_M$ tends to $0$. The map $f$ has an inverse branch $\zeta$ with fixed point $p$.  }
  \end{figure}

\paragraph{Related formalisms and settings.}
The main ergodic theme  of the paper  lies at the intersection of several thermodynamic formalisms but outside their standard compactness regimes. A  compact  case was studied by Rugh for parameter dependent  random conformal
repellers under uniform expansion condition~\cite{Rugh_2008}.
Buzzi, Crovisier and Sarig relied on  countable state thermodynamic formalism~\cite{SARIG_1999}  to advance  symbolic models in non-uniformly hyperbolic dynamics \cite{CrovSar2022MME}.  Their continuity defect theory allows both the map and the invariant measure to vary~\cite{BuzCrovSar2022Lyap}.

In our setting,  the
coding bifurcates with the parameter. The alphabet and geometry vary and
the inducing scheme degenerates  as $\eps\to0$. As in Ruelle's case, the goal is an explicit  asymptotic near the root of the pressure. The key observation and the first step is to replace compactness by  uniform exponential tails to prevent dissipation of the  pressure towards degenerating  branches. It is worth to emphasize that our estimates hold for every $c\in \M$ sufficiently close to $-2$ rather than only  on a set obtained by  the parameter exclusion or for a typical subfamily.

\paragraph{The lower envelope and sharpness.}
\def\svgwidth{11cm}
\begin{figure}
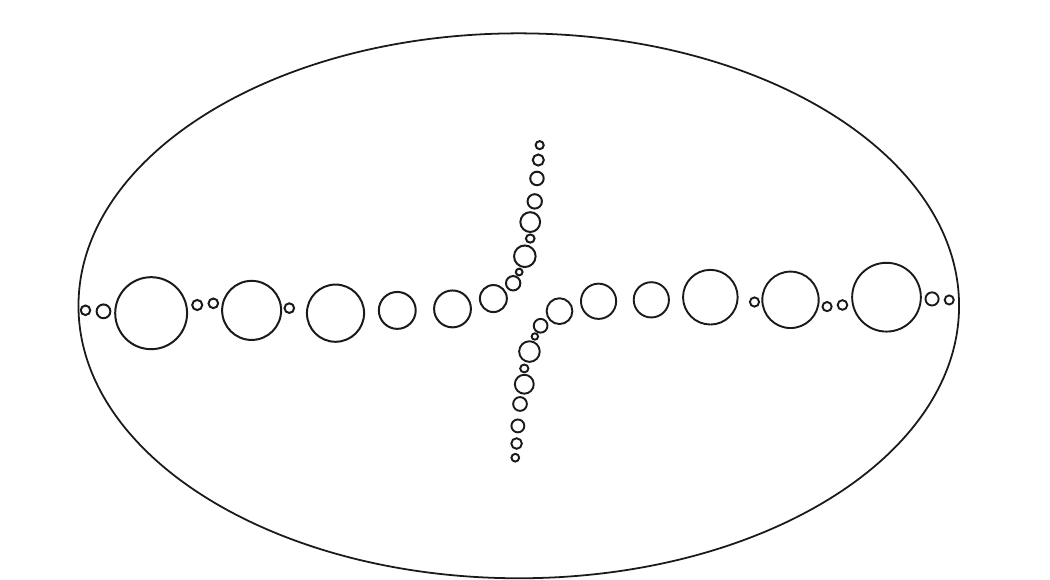
    \caption{The induced map $\vp$, constructed in \S\ref{sec:definevp}, to the deep inducing domain $U_M$ has two distinguished collections of infinitely many branches, those close to the real axis almost joining $\pm q_M$ and those joining the two preimages by $f_c$ of $-p$, close to the imaginary axis. The diagram is schematic; the distance between the preimages of $-p$ is approximately $2\sqrt{\frac{2}{3}} \sqrt{|\eps|}$.}
    \label{fig:umphi}
  \end{figure}
The induced box mapping (see Figure~\ref{fig:umphi}) has two geometrically distinguished families of
branches. The first is asymptotically horizontal and encodes the limiting
interval dynamics. The second is transverse and is responsible for the
first-order increase in dimension, its total transverse size is
$$
2\sqrt{\frac{2}{3}}\,|\eps|^{1/2}
+o\left(|\eps|^{1/2}\right).$$
For an inducing domain $U_M$ of depth $M$, this geometry yields
\begin{equation}\label{equ:pres_eps}
P_{\eps}(1) \geq\frac{1}{|q_M(c)|}\sqrt{\frac{2}{3}}\,|\eps|^{1/2}
\left(1+o(1)\right),
\end{equation}
where the errors are understood in the iterated limit
$\eps\to0$ with $M$ fixed, followed by $M\to\infty$. The points $q_M=q_M(c) \in \partial U_M$, defined in \S\ref{secumqm}, satisfy $\lim_{M\to \infty}\lim_{c \to -2} q_M(c) = 0$. 

For fixed $M$, the uniform thermodynamic formalism compares the
pressure slope (whose strong dependence on $M$ is suppressed from the notation) with that of the Chebyshev return map
uniformly for $t$ in a neighbourhood of $1$. By a direct calculation, 
$$ \lim_{M\to\infty}-\frac{q_M(-2)}{\pi}P'_{0}(1)=\log 2,$$
and consequently, in the same sense as (\ref{equ:pres_eps}), 
$$ P_\eps'(t)    =-\frac{\pi\log 2}{q_M(-2)}\left(1+o(1)\right).$$

The induced map defines a generalized Cantor repeller $\Lambda_{\eps}\subset \J_c$.  Bowen's formula gives
$P_{\eps}^{-1}(0)=\HD(\Lambda_{\eps})$.
By the mean value theorem,
$$ \HD(\Lambda_{\eps})-1 =
\frac{P_{\eps}(1)}
{-P'_{\eps}(\xi_{\eps})}$$
for some $\xi_{\eps}\in(1,\HD(\Lambda_{\eps}))$.  Since the inducing scale  factors  $|q_M(c)|$ and $q_M(-2)$ are asymptotically equal and  occur in the pressure and its
slope,
$$ \HD(\J_c)-1 \geq \sqrt{\frac{2}{3}}\frac{1}{\pi\log 2}~ {|\eps|}^{1/2} 
\left(1+o(1)\right).$$

The reverse inequality for the lower envelope follows from
Theorem~\ref{theo:upper}. For real $\kappa$-Misiurewicz parameters,
the postcritical orbit remains a definite distance from the critical
point, so the recurrent central obstruction is absent and the induced
repeller has full Hausdorff dimension in $\J_c$. This gives the matching
upper estimate and the sharp asymptotic in Theorem~\ref{theo:upper}.


\paragraph{Collet-Eckmann estimates.} A  polynomial $f_c(z)=z^2+c$ is Collet-Eckmann~\cite{Collet_1983} if
$$ \liminf_{n\to\infty}\frac1n  \log |(f_c^n)'(c)|>0.$$
In~\cite{Dobbs_2022}, it was  proved that on a set of {\em real} Collet-Eckmann
parameters having $-2$ as a one-sided Lebesgue density point, there exists
$C>1$ such that
\begin{equation}\label{equ:CE-bounds}
	1+C^{-1}\sqrt{c+2}  \leq \HD(\J_c)  \leq 1+C|\log(c+2)|\sqrt{c+2}.
\end{equation}
The upper estimate of (\ref{equ:CE-bounds}) is based on Jones's $\beta$-numbers and their
non-uniform extension~\cite{Jones_1990,Graczyk_2022}.  That geometric $L^2$ method is
robust 
but it does not capture  the first order pressure contribution needed for an explicit
sharp constant.  The present inducing and pressure method is complementary.
It gives a deterministic sharp lower estimate throughout the cusp and isolates
the universal square-root contribution responsible for $\Omega$.


\paragraph{Organization of the paper.}
The paper is organized in the following way.  Sections~\ref{sec:Initest} and~\ref{sec:Geom} give the elementary
expansion, continuation and geometric estimates near the Chebyshev
parameter. The technique is based on  holomorphic motions but overall direct and elementary estimates are privileged.  Sections~\ref{sec:CanRep}--\ref{secTDF} develop the pressure formalism for convergent
families of generalized Cantor repellers with uniform exponential tails.
Sections~\ref{sec:Lyap} and~\ref{sec:mis} compute the limiting pressure slope and prove the sharp
estimate for real Misiurewicz parameters.  Section~\ref{sec:tails} constructs the
induced box mappings needed in the cusp and proves their uniform
exponential tails and linkage properties.  Section~\ref{sec:pres} completes the
pressure estimate and proves the sharp lower-envelope asymptotic.




\section{Initial estimates}\label{sec:Initest}

We consider $f_c : z \mapsto z^2 + c$, $c = -2 + \eps$, with $\eps \in \C$ small.

Denote by $\zeta = \zeta_c$ the branch of $f_c^{-1}$, with a fixed point 
 $$p = p(c)  = 2 - \eps/3 + O(\eps^2)$$ 
 near $2$, defined on the right half-plane containing $c$ in its boundary. 
We denote by $q = q(c) = -1+\eps/3 + O(\eps^2)$ the other fixed point of $f_c$. 

We write
$$ 
A~\lesssim~B
$$
if there is a constant $C >1$ for which the expressions $A, B$ satisfy the statement $A \leq CB$ for all small $\eps$. We define $\grtsim$ analogously. If both statements hold, we write $A~\sim~B$. 

We shall occasionally use the usual big-$O$ notation.
\subsection{Expansion estimates}

On $\{z : \Re(z)> -1.2\}$, $\zeta$  is univalent and a strict contraction, with $|D\zeta| < 3/5$ provided $\eps$ is small. Consequently, on the ball $B(2, 3.2)$, iterates $\zeta^n$ have uniformly bounded distortion, so 
\begin{equation} \label{eqn:zetadist}
    |D\zeta^n(z)| ~\sim~
|p - \zeta^n(z)| ~\sim~ |Df_c(p)|^{-n}
\end{equation}
for any $z$ in the ball. 
Hence, if $z \in B(0, 1.1)$, $C>0$ and $k$ is minimal for which $|\zeta^k(z)-p| < C|\eps|$, then 
\begin{equation} \label{eqn:epslog4}
    |2p|^k ~\sim~  |\eps|^{-1}.
\end{equation}

The sequence of sets $\zeta^n(B(0,1.1))$ form a chain of topological discs converging to $p$. 

For $c=-2$, $f_{-2}(B(0,1.1)) = B(-2, 1.21)$, which compactly contains $-\zeta_c^n(B(0,1.1))$ for all $n\geq 1$. In particular, if $z\in B(0,1.1)$ returns to $B(0,1.1)$ with (minimal) return time $n\geq2$, then $f_c^{2}(z) \in \zeta_c^{n-2}(B(0,1.1)).$

\begin{lem} \label{lem:sqrtratio}
    For $|c+2| = |\eps|$ small enough, 
    if $10|\eps|^{1/2} \leq |w| \leq 1/4$ and if $f_c(y) = -f^2(w)$, then $|y|>|w|$ and
    $$\left| \frac{w}{y} Df_c(f_c(w))\right| > 3/2.$$
\end{lem}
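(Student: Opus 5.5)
The plan is a direct algebraic computation: express $y^2$ explicitly in terms of $w$ and $\eps$, then bound $|y/w|$ from above and below, and $|Df_c(f_c(w))|$ from below. No earlier result is needed beyond $c=-2+\eps$.

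First I solve for $y$. Write $f=f_c$ with $c=-2+\eps$. Then
$$f^2(w)=(w^2-2+\eps)^2-2+\eps=4-4(w^2+\eps)+(w^2+\eps)^2-2+\eps.$$
The equation $y^2-2+\eps=-f^2(w)$ therefore becomes
$$y^2=4w^2+2\eps-(w^2+\eps)^2 .$$
Since $|w|\geq 10|\eps|^{1/2}>0$, dividing by $w^2$ gives
$$\frac{y^2}{w^2}=4+E, \qquad E=\frac{2\eps}{w^2}-\frac{(w^2+\eps)^2}{w^2}=\frac{2\eps}{w^2}-w^2-2\eps-\frac{\eps^2}{w^2}.$$

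Next I bound the error $E$ using the two hypotheses on $w$:
\begin{itemize}
\item the lower bound $|w|\geq 10|\eps|^{1/2}$ gives $|\eps/w^2|\leq 1/100$, so $|2\eps/w^2|\leq 0.02$;
\item it also gives $|\eps^2/w^2|\leq |\eps|/100$, which is negligible;
\item the upper bound $|w|\leq 1/4$ gives $|w^2|\leq 1/16$;
\item finally $|2\eps|$ is arbitrarily small.
\end{itemize}
Hence $|E|\leq 0.02+1/16+o(1)<0.1$ for $|\eps|$ small. Consequently $3.9\leq |y/w|^2\leq 4.1$. The lower bound already gives $|y|>|w|$, which is the first claim. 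The upper bound gives $|y/w|\leq \sqrt{4.1}<2.03$.

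Then I bound the derivative: $Df(f(w))=2(w^2-2+\eps)$, so
$$|Df(f(w))|\geq 2\bigl(2-|w|^2-|\eps|\bigr)\geq 2\bigl(2-1/16-o(1)\bigr)>3.8 .$$
Combining the two bounds,
$$\left|\frac{w}{y}Df(f(w))\right|\geq \frac{3.8}{2.03}>1.8>3/2 .$$

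None of these steps is a real obstacle. The only care needed is bookkeeping of the constants, and in particular using the hypothesis $|w|\geq 10|\eps|^{1/2}$ to control the term $2\eps/w^2$, which is the only part of $E$ not small in absolute terms. The numerical margin ($1.8$ against $3/2$) is comfortable, so crude bounds suffice. Note also that the choice of square root $y$ is irrelevant, since only $|y|$ enters.
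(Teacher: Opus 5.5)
Your proof is correct. It has the same skeleton as the paper's: bound $|y/w|$ from both sides, bound $|Df_c(f_c(w))|$ from below, then combine. The difference is in how you control $|y/w|$.

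The paper never writes $y^2$ explicitly. Since $f_c(-p)=p$, the quotient $(f_c^2(w)-p)/(f_c(w)+p)$ is a difference quotient of $f_c$ on $B(-2,1/15)$, so its modulus lies in $[15/4,17/4]$. The paper then shifts by $p+c=O(\eps)$, which is negligible against $|f_c(w)-c|=|w|^2\geq 100|\eps|$. This gives $|(f_c^2(w)+c)/(f_c(w)-c)|\in[7/2,9/2]$, that is, $|y/w|\in[\sqrt{7/2},\sqrt{9/2}]$, and a final lower bound of $5\sqrt2/4>3/2$.

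Your exact identity $y^2=4w^2+2\eps-(w^2+\eps)^2$ is shorter and sharper: it gives $|y/w|^2\in[3.9,4.1]$ and a final ratio above $1.8$. It also makes the role of each hypothesis transparent: $|w|\geq 10|\eps|^{1/2}$ is exactly what tames $2\eps/w^2$, and $|w|\leq 1/4$ is what tames $w^2$.

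The paper's version uses only derivative bounds for $f_c$ near $-2$, the fixed point $p$, and the smallness of $p+c$. It is therefore less tied to the exact quadratic formula and is phrased in the same dynamical terms (orbits passing near $-p\mapsto p$) as Lemma~\ref{lem:expandingreturns}. For the statement as given, both arguments are complete.
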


\begin{proof}
    As $|w|\leq1/4$, 
    $$|f_c(w) + 2| \leq 1/16 + |\eps| < 1/15$$
    while crudely, on $B(-2, 1/15)$, $4 - 1/4 \leq |Df_c| < 4 + 1/4.$
    Therefore,
    $$
    \frac{15}{4} \leq \left|\frac{f^2_c(w) - p}{f_c(w) + p} \right| \leq \frac{17}4.$$
    Meanwhile, $|f_c(w)-c| \geq 100\eps  \geq 100|p+c|$, so
    $$
    \frac{14}{4} \leq \left|\frac{f^2_c(w) + c}{f_c(w) - c} \right| \leq \frac{18}4.$$
    If $f_c(y) = -f^2_c(w)$, consequently,
    $\sqrt{7/2} \leq |y/w| \leq \sqrt{9/2}$ and,
    for this $y$, 
    $$\frac{|Df^2_c(w)|}{|Df_c(y)| }  \geq \frac{15/4}{\sqrt{9/2}} =\frac32 \cdot \frac54 \sqrt{\frac89}> 3/2.$$
\end{proof}

The following lemma is not especially profound, but exponential growth of the derivative simplifies many subsequent considerations. 
\begin{lem}  \label{lem:expandingreturns}
    For $|c+2| = |\eps|$ small enough, 
    if $z \notin B(0, \eps^{2/3})$ and $n$ is a first return time of $z$ to $B(0,1.1)$ then $|Df_c^n(z)| > 2^{n/2}.$
\end{lem}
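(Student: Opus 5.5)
The plan is to split by return time and to decompose an orbit segment into a passage near $-2$, a long passage along the chain $\zeta_c^k(B(0,1.1))$ near $p$, and a final step back into $B(0,1.1)$. Let $z\in B(0,1.1)\setminus B(0,|\eps|^{2/3})$ and let $n$ be its first return time.

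\emph{Case $n=1$.} Then $f_c(z)\in B(0,1.1)$ while $|f_c(z)|\ge |z|^2$ with $z$ near... This case will have to be handled directly. The relevant $z$ satisfy $z^2\approx 2$, so $|z|\approx\sqrt2$. If $z$ must lie in $B(0,1.1)$, then $|z^2-2+\eps|<1.1$ forces $|z|^2>0.9-|\eps|$, and $|Df_c(z)|=2|z|>2\sqrt{0.89}>\sqrt2$. So the case $n=1$ is fine.

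\emph{Case $n\ge2$.} By the remark preceding Lemma~\ref{lem:sqrtratio}, $f_c^2(z)\in\zeta_c^{n-2}(B(0,1.1))$. Write $w=f_c^n(z)\in B(0,1.1)$, so that $f_c^2(z)=\zeta_c^{n-2}(w)$. By (\ref{eqn:zetadist}),
$$|Df_c^{n-2}(f_c^2(z))| = |D\zeta_c^{n-2}(w)|^{-1}\sim |2p|^{n-2}\approx 4^{n-2},$$
with a uniform constant. It then remains to control $|Df_c^2(z)|=4|z|\,|f_c(z)|\approx 8|z|$, since $f_c(z)$ is near $-2$ when $z$ is small. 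Combining the two estimates gives $|Df_c^n(z)|\gtrsim |z|\,4^{n}$. The danger is small $|z|$, which corresponds to large $n$. I will use the relation between $|z|$ and $n$: $f_c^2(z)$ lies within distance $\sim 4^{-(n-2)}$ of $p$, while
$$f_c^2(z)-p=(c+z^2)^2+c-p = z^2\bigl(2c+z^2\bigr)+(c^2+c-p).$$
Here $c^2+c-p=O(\eps)$, since $p^2+c=p$ gives $c^2+c-p=(c-p)(c+p)+c^2\ldots$; more precisely $|f_c^2(0)-p|=|c^2+c-p|\sim|\eps|$. Hence $|f_c^2(z)-p|\le 4|z|^2+C|\eps|$.

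If $|z|^2\ge |\eps|$, then $4^{-n}\lesssim |z|^2$, so $|z|\gtrsim 2^{-n}$ and $|Df_c^n(z)|\gtrsim 2^{n}$. If instead $|\eps|^{4/3}\le|z|^2<|\eps|$, then $4^{-n}\lesssim|\eps|$, so $4^n\gtrsim|\eps|^{-1}$, and in fact (\ref{eqn:epslog4}) gives $4^n\sim|\eps|^{-1}$ up to additive constants in $n$. In this regime $|z|\ge|\eps|^{2/3}\gtrsim 4^{-2n/3}$, so $|Df_c^n(z)|\gtrsim 4^{n/3}=2^{2n/3}$.

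In both subcases we obtain $|Df_c^n(z)|\ge C^{-1}2^{(1/2+\delta)n}$ with $\delta=1/6$, which beats $2^{n/2}$ once $n\ge n_0$. Because $\eps$ is small, small $|z|$ forces $n$ large: $n\ge\tfrac12\log_2(1/|\eps|)-O(1)$. Therefore only bounded $n$ with $|z|$ bounded below remain, and these are treated by the $n=1$ computation and a compactness check.

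\textbf{Main obstacle.} The hard step is making the constants uniform for moderate $n$ and moderate $|z|$ (say $|z|\ge 1/4$), where the asymptotic $\sim$ does not automatically give the sharp constant $2^{n/2}$. There I would use that $|D\zeta|<3/5$, so each step along the chain near $p$ expands by more than $5/3>\sqrt2$. I would also apply the crude bounds $|Df_c|\ge 2|z|$ on the first two steps: $|f_c(z)|\ge 2-1.21-|\eps|$ when it is not in $B(0,1.1)$. Together these should give the bound stepwise without distortion constants.
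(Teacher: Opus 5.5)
\textbf{Gap: the finite range $n<n_0$, where the sharp constant matters, is not handled.}

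Your large-$n$ analysis is sound. Writing $f_c^2(z)=\zeta_c^{n-2}(f_c^n(z))$ and using \eqref{eqn:zetadist} gives $|Df_c^n(z)|\ge C^{-1}|z|\,|2p|^{n}$ and $|2p|^{-n}\le C(|z|^2+|\eps|)$. From these, your two subcases yield $|Df_c^n(z)|\ge C^{-1}|2p|^{n/2}$ or $|Df_c^n(z)|\ge C^{-1}|2p|^{n/3}$. The subcase $|z|^2<|\eps|$ is essentially the paper's final case. Keep $|2p|$ rather than $4$, and use only the lower bound $|2p|^n\ge C^{-1}|\eps|^{-1}$: return times near $f_c^{-1}(-p)$ are unbounded, so neither $|2p|^n\sim 4^n$ nor $4^n\sim|\eps|^{-1}$ holds uniformly. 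The real issue is that $C$ comes from Koebe distortion and is unspecified. This argument therefore proves $|Df_c^n(z)|>2^{n/2}$ only for $n\ge n_0(C)$, whereas the lemma has constant $1$ in front of $2^{n/2}$.

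The missing piece is the range $n<n_0$, i.e.\ $|z|$ between a fixed $r_0$ and $1.1$. You flag this as the main obstacle but do not close it, and the stepwise bound you suggest fails. Take $c$ near $-2$ and $z=0.1$: the orbit is $-1.99,\,1.96,\,1.84,\,1.39,\,-0.06$, so $n=5$. Even with the second factor at its true value $2|f_c(z)|\approx 3.98$, your bound $2|z|\cdot 2|f_c(z)|\cdot(5/3)^{n-2}$ is about $3.7<2^{5/2}\approx 5.7$, while the true derivative is about $32$. The factor $5/3$ throws away most of the expansion $\approx 4$ near $p$. A ``compactness check'' only reduces the problem to $c=-2$, where you would still need the strict inequality for complex $z$, which you do not supply.

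\textbf{How the paper handles this range.} It avoids distortion constants entirely. For $|z|\ge 1/4$ it uses three explicit elementary estimates. For $10|\eps|^{1/2}\le|z|\le 1/4$ it introduces auxiliary points with $f_c(y_j)=-f_c^{j+1}(z)$. By Lemma~\ref{lem:sqrtratio}, each factor of the telescoping product \eqref{eqn:telprod} exceeds $3/2>\sqrt2$. This reduces the problem to a point $y_{k_0}$ with $|y_{k_0}|\ge 1/4$ and first return time $n-k_0$, covered by the explicit cases.

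\textbf{How to complete your own route.} At $c=-2$, write $z=2\cos\theta$. Then $Df_{-2}^n(z)=2^n\sin(2^n\theta)/\sin\theta$, and $|\sin\theta|^2=|1-z^2/4|\in[0.69,1.31]$ for $|z|\le 1.1$. Hence $|Df_{-2}^n(z)|\ge 0.72\cdot 2^n>2^{n/2}$ whenever $|z|,|f_{-2}^n(z)|\le 1.1$. Continuity of $(c,z)\mapsto Df_c^n(z)$ on the compact sets $\{|z|\le 1.1,\ |f_c^n(z)|\le 1.1\}$ then gives the strict inequality for all $c$ near $-2$, for each of the finitely many $n<n_0$.
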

\begin{proof}
    If $|z|\geq 3/4$, this is also true for iterates $f^j_c(z)$, $j <n$, therefore $|Df_c(f^j_c(z))| \geq 3/2$ and $|Df^n_c(z)| \geq (3/2)^n$. 

    If $2/5 \leq |z| \leq 3/4$, then $|f_c(z)| \geq 2 - 9/16 - |\eps|$ and $|Df_c(z)| \geq 4/5$, 
    so $|Df^2_c(z)| \geq \frac45(23/8 - 2|\eps|) > 2$. 

    If $1/4 \leq |z| \leq 2/5$, then $|f_c(z)| \geq 46/25 - |\eps| > \sqrt{2+1.1}$, so $n\geq 3$ and 
     $$|Df^3_c(z)| > \frac12 2 \left(\frac{46}{25} - |\eps|\right) 2.2 > 3 > 2^{3/2}.$$ 

    If $10|\eps|^{1/2}  \leq |z| \leq 1/4$, we define an auxiliary sequence $y_1, \ldots$ by $$f_c(y_j) = -f_c^{j+1}(z),$$ 
    choosing arbitrarily between $\pm y_j$. Set $y_0 := z$. 
    By the definition and the Chain Rule, for $k\geq 2$, 
    \begin{equation}\label{eqn:telprod}
    \left| \frac{Df_c^k(y_j)}{Df_c^{k-1}(y_{j+1})} \right| = 
    \left| \frac{y_j}{y_{j+1}} Df_c(f_c(y_j))\right|.
    \end{equation}
    Let $k_0$ be minimal for which $|y_{k_0}| \geq 1/4.$ From  \eqref{eqn:telprod} and Lemma~\ref{lem:sqrtratio},
    $$
    \left| \frac{Df_c^{n}(z)}{Df_c^{n-k_0}(y_{k_0})} \right| 
    \geq (3/2)^{k_0}.$$
    Combine with the estimate for $|z| \geq 1/4$ to complete this case.

    If $|z| \leq 10|\eps|^{1/2}$ then $|f_c(z) + p| \leq 200|\eps|$. Let $n_0$  be minimal with 
    $$|f^{n_0}_c(z) - p| 
    \geq |\eps|
    $$
    and let $n_1 = n-n_0$. 
    Using \eqref{eqn:zetadist} and \eqref{eqn:epslog4},
     $$|Df_c^{n_1}(f^{n_0}_c(z))| ~\sim~ |\eps|^{-1} ~\sim~|2p|^{n_1},$$ 
     while $|z| \geq |\eps|^{2/3}$, so 
     $$\left|Df_c(z)Df_c^{n_1}(f^{n_0}_c(z))\right| ~\grtsim~ |\eps|^{-1/3} ~\sim~ |2p|^{n_1/3} > 2^{n_1/2} .$$
    Iterates from $1$ to $n_0$ improve the derivative estimate. 
\end{proof}

    The expression 
    $$
    T_n(z) := \sum_{j=1}^n\frac{1}{Df_c^j(z)}
    $$
    emerges naturally from the Implicit Function Theorem, see the next subsection. We have elementary estimates: 

    \begin{lem}
    If $|f^j_c(z)| > 0.9$ for $j = 0, \ldots, n-1$, 
    \begin{equation}\label{eqnsmallspeed}
        |T_n(z)| < \frac54
\end{equation}
    and if, moreover, $|f^j_c(z)| > 3/2$ for $j=0, \ldots, n-2$, then 
    \begin{equation}\label{eqnedgespeed}
        |T_n(z)| < \frac45.
\end{equation}
    If $|f^j_c(z)| > 0.9$ for $j = 1, \ldots, n-1$, 
    \begin{equation}\label{eqnreturnspeed}
        |T_n(z)| <  \frac94\frac{1}{|Df_c(z)|}.
\end{equation}
    \end{lem}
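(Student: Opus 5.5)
Each of the three bounds follows from the chain rule, $Df_c^j(z)=\prod_{i=0}^{j-1}2f_c^i(z)$, together with comparison to a geometric series.

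For \eqref{eqnsmallspeed}, the hypothesis $|f^i_c(z)|>0.9$ for $i=0,\dots,n-1$ gives $|Df_c(f^i_c(z))|>1.8$ for every factor, so $|Df^j_c(z)|>1.8^j$. Hence
$$|T_n(z)|<\sum_{j\ge1}1.8^{-j}=\frac{1}{0.8}=\frac54 .$$

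For \eqref{eqnedgespeed}, the stronger hypothesis $|f^i_c(z)|>3/2$ for $i\le n-2$ makes the first $n-1$ factors exceed $3$. Only the last factor, at $i=n-1$, is controlled merely by $1.8$. We therefore bound
$$|T_n(z)|\le\sum_{j=1}^{n-1}3^{-j}+\frac{1}{3^{n-1}\cdot1.8}.$$
For $n=1$ the single term is $1/|2z|<1/1.8=5/9<4/5$. For $n\ge2$ the bound is $\tfrac12(1-3^{1-n})+\tfrac{5}{9}3^{1-n}$. This equals $\tfrac12+3^{1-n}\bigl(\tfrac59-\tfrac12\bigr)\le\tfrac12+\tfrac1{18}\cdot\tfrac13<\tfrac45$.

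For \eqref{eqnreturnspeed}, factor out the first derivative. For $j\ge1$ we have $Df^j_c(z)=Df_c(z)\,Df^{j-1}_c(f_c(z))$, so
$$T_n(z)=\frac{1}{Df_c(z)}\Bigl(1+\sum_{j=1}^{n-1}\frac{1}{Df^{j}_c(f_c(z))}\Bigr)=\frac{1+T_{n-1}(f_c(z))}{Df_c(z)}.$$
The point $f_c(z)$ satisfies the hypothesis of \eqref{eqnsmallspeed} with $n-1$ in place of $n$, so $|T_{n-1}(f_c(z))|<5/4$. This gives $|T_n(z)|<\frac{9}{4}\,\frac{1}{|Df_c(z)|}$. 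When $n=1$ the sum is empty and the bound is immediate.

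The only step needing any care is the bookkeeping in \eqref{eqnedgespeed}: the last factor is not covered by the $3/2$ hypothesis, and the small cases $n=1,2$ must be checked separately. Everything else is a direct geometric-series bound, and no dependence on $\eps$ enters.
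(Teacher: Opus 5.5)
Your proof is correct: the bounds follow, as you say, from the chain rule. For the first you compare with $\sum_{j\ge1}1.8^{-j}=5/4$; for the second with $\sum 3^{-j}$ plus a single final factor exceeding $1.8$; and for the third you use $T_n(z)=\bigl(1+T_{n-1}(f_c(z))\bigr)/Df_c(z)$ together with the first bound. The paper gives no written proof and simply calls these elementary estimates; your geometric-series argument is the natural one intended, the same reasoning the paper uses in Lemma~\ref{lem:thirdquarter}.
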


\begin{lem} \label{lemSplitreturns}
    There exists a universal constant $C>1$ such that the following holds. 
     If $|f^j_{c_0}(z)| > |c_0+2|^{2/3}$ for $0 \leq j \leq n-1$, 
    $$
    |T_n(z)| \leq   C \left(\inf_{0\leq j <n} |f^j_{c_0}(z)|\right)^{-1}.$$ 
     If $|f^j_{c_0}(z)| > |c_0+2|^{2/3}$ for $0 \leq j$, then the series $T_\infty(z)$ converges absolutely. 
\end{lem}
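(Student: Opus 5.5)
Write $\delta := \inf_{0\le j<n}|f^j_{c_0}(z)|$, so that $\delta>|\eps|^{2/3}$ with $\eps=c_0+2$. Cut the orbit segment $z, f_{c_0}(z),\dots,f^{n-1}_{c_0}(z)$ at the successive times $0\le t_1<t_2<\dots<t_m<n$ at which it lies in $B(0,1.1)$. Outside these times every iterate has modulus at least $1.1>0.9$. We will estimate each block of $T_n$ between consecutive visits and then sum using the exponential growth from Lemma~\ref{lem:expandingreturns}.

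\textbf{Splitting $T_n$ into blocks.} For $i<m$ let $r_i=t_{i+1}-t_i$ be the return time of $w_i=f^{t_i}_{c_0}(z)$ to $B(0,1.1)$, and set $r_m=n-t_m$. Put $t_{m+1}:=n$. Then
$$T_n(z)=T_{t_1}(z)+\sum_{i=1}^{m}\frac{1}{Df^{t_i}_{c_0}(z)}\,T_{t_{i+1}-t_i}(w_i).$$
The initial block $T_{t_1}(z)$ only involves iterates of modulus greater than $0.9$, so by \eqref{eqnsmallspeed} it is bounded by $5/4\le \frac54\cdot 1.1\,\delta^{-1}$; this uses $\delta\le 1.1$, which holds as soon as there is at least one visit. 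For each block starting at $w_i$, the intermediate iterates $f^j(w_i)$, $1\le j<r_i$, have modulus greater than $0.9$. Estimate \eqref{eqnreturnspeed} then gives
$$|T_{r_i}(w_i)|\le \tfrac94|Df_{c_0}(w_i)|^{-1}=\tfrac98|w_i|^{-1}\le \tfrac98\,\delta^{-1}.$$

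\textbf{Controlling the prefactors.} Each $w_i$ satisfies $|w_i|\ge\delta>|\eps|^{2/3}$, so Lemma~\ref{lem:expandingreturns} applies to every complete return and gives $|Df^{r_i}(w_i)|>2^{r_i/2}\ge\sqrt2$. Before the first visit, the iterates stay outside $B(0,1.1)$, where $|Df|\ge 2.2$; hence $|Df^{t_1}(z)|\ge1$. Multiplying these bounds,
$$|Df^{t_i}_{c_0}(z)|\ge 2^{(i-1)/2},$$
and therefore
$$|T_n(z)|\le \tfrac54+\tfrac98\,\delta^{-1}\sum_{i\ge1}2^{-(i-1)/2}\le C\delta^{-1}$$
with $C$ universal.

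\textbf{The infinite orbit.} If the hypothesis holds for all $j\ge0$, the same decomposition shows that the partial sums are dominated termwise by an absolutely convergent series. Within a block, the terms are controlled because \eqref{eqnreturnspeed} actually comes from a geometric bound on $\sum_j|Df^j|^{-1}$ along iterates of modulus greater than $0.9$; near the fixed point $p$ one has $|Df|\approx4$. Across blocks, the prefactors decay geometrically. If the orbit visits $B(0,1.1)$ only finitely often, the tail lies entirely in $|f^j|>0.9$, and the individual terms $|Df^j|^{-1}$ decay geometrically there as well.

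\textbf{Main obstacle.} The main point to check is that \eqref{eqnreturnspeed} and Lemma~\ref{lem:expandingreturns} are stated for $f_c$ with $|c+2|$ small and hold uniformly at the parameter $c_0$. One must also make sure that the bound used is on the absolute sum $\sum_j|Df^j|^{-1}$ and not merely on $|T|$. This is needed for absolute convergence, and it matches how those estimates are proved (geometric comparison).
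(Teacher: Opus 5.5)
Your argument is correct and is essentially the paper's: cut the orbit at its visits to $B(0,1.1)$, bound each block by a constant times $|f^{t_i}_{c_0}(z)|^{-1}$ because the intermediate iterates satisfy $|Df_{c_0}|>2$, and sum using the geometric decay of $|Df^{t_i}_{c_0}(z)|^{-1}$ from Lemma~\ref{lem:expandingreturns}. The paper works with the absolute sums $\hat T_n(z)=\sum_{j\le n}|Df^j_{c_0}(z)|^{-1}$ from the start, which makes absolute convergence of $T_\infty$ immediate (a uniform bound on monotone partial sums); and the no-visit case, which you leave open, is immediate: there $\delta\ge 1.1$, so $|Df^j_{c_0}(z)|\ge(2\delta)^j$ and $|T_n(z)|\le(2\delta-1)^{-1}\le\delta^{-1}$.
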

\begin{proof}
    Let 
    $$
    \hat{T}_n(z) := \sum_{j=1}^n\frac{1}{|Df_c^j(z)|}
    $$
    and let $n_k$ be the $k$th return time of $z$ to $B(0,1.1).$
    Then 
    $$
    \hat{T}_{n_{k+1}}(z) - 
    \hat{T}_{n_{k}}(z)  = \frac{1}{|Df_c^{n_k}(z)|} \hat{T}_{n_{k+1}-n_k}(f_c^{n_k}(z)).$$
    As $|Df_c(f^j(z))| >2$ for $j = n_k+1, \ldots, n_{k+1}-1,$
    $$\hat{T}_{n_{k+1}-n_k}(f_c^{n_k}(z)) \leq \frac{2}{|Df_c(f_c^{n_k}(z))|}.$$
    By Lemma~\ref{lem:expandingreturns}, ${|Df^{n_k}_c(z)|}^{-1} \leq 2^{-n_k/2}$. The result follows, one can take $C = 2 \sum_{k=1}^\infty 2^{-k/2}$. 
\end{proof}
\begin{lem} \label{lem:thirdquarter}
    There exists $\delta>0$ such that, for $|\eps|$ small enough, if $z \in B(p,\delta)$ and $n$ is a first entry time of $z$ to $B(0,1.1)$ then 
    $$
    \left| \frac 13 - T_n(z) \right| < 1/100.$$
\end{lem}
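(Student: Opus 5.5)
The idea is that an orbit starting near the repelling fixed point $p$ spends a long stretch near $p$, where $Df_c\approx 2p\approx 4$. So $T_n(z)$ should be close to the geometric series $\sum_{j\geq1}4^{-j}=1/3$. The proof splits $T_n(z)$ into a head of fixed length $J$, which we compare with that series, and a tail, which we control with \eqref{eqnsmallspeed}.

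First fix $J$ so large that $\frac54(3.5)^{-J}<1/1000$ and $\sum_{j>J}(3.5)^{-j}<1/1000$. Then choose $\delta>0$, depending only on $J$, so small that $4.1^J\delta$ is tiny, say less than $10^{-3}\cdot 4^{-J}$.

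\emph{Orbits stay near $p$ for $J$ steps.} Since $|Df_c|\leq 2|w|<4.1$ for $w$ near $p$ and $|\eps|$ small, induction gives
$$|f_c^j(z)-p|\leq 4.1^j|z-p|\leq 4.1^J\delta \qquad (0\leq j\leq J).$$
Hence $f_c^j(z)$ is far from $B(0,1.1)$ for these $j$, so the first entry time satisfies $n>J$. In addition $|2f_c^j(z)-2p|\leq 2\cdot 4.1^J\delta$, so for $1\leq j\leq J$
$$Df_c^j(z)=(2p)^j\,(1+\eta_j),\qquad |\eta_j|\lesssim J\,4.1^J\delta.$$
By the choice of $\delta$ this is small. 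We also get $|Df_c^J(z)|\geq 3.5^J$.

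\emph{Head.} We have $\bigl|\sum_{j=1}^J Df_c^j(z)^{-1}-\sum_{j=1}^J(2p)^{-j}\bigr|\lesssim \max_j|\eta_j|$, which is small. Also $\sum_{j=1}^J(2p)^{-j}$ differs from $\sum_{j\geq1}4^{-j}=1/3$ by at most $O(|\eps|)+\sum_{j>J}(3.5)^{-j}$, using $p=2-\eps/3+O(\eps^2)$.

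\emph{Tail.} We write
$$T_n(z)-T_J(z)=\frac{1}{Df_c^J(z)}\,T_{n-J}\bigl(f_c^J(z)\bigr).$$
Every point $f_c^{i}(z)$ with $J\leq i\leq n-1$ lies outside $B(0,1.1)$, because $n$ is the first entry time. So \eqref{eqnsmallspeed} applies and gives $|T_{n-J}(f_c^J(z))|<5/4$. The tail is therefore at most $\frac54\,3.5^{-J}$.

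Adding the three errors (head comparison, geometric truncation plus the $O(|\eps|)$ shift, and tail) gives $|T_n(z)-1/3|<1/100$ once $|\eps|$ is small enough. The only delicate point is the order of choices: $J$ first, then $\delta$ depending on $J$, then $\eps$. This order is what keeps the linearisation error near $p$ under control, and none of the constants depend on $\eps$.
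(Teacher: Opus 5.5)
Your proof is correct and follows the same idea as the paper, whose proof is only the one-line remark that $|Df_c(p)|$ is close to $4$ and $\sum_j 4^{-j}=1/3$. You supply the omitted details: the head/tail split at a fixed $J$, the order of choices ($J$, then $\delta$, then $\eps$), and control of the tail by \eqref{eqnsmallspeed}, which applies because every iterate before the first entry time has modulus at least $1.1$.
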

\begin{proof}
    This holds since $|Df_c(p)|$ is close to $4$ for small $|\eps|$, and $\sum_j 4^{-j} = 1/3$. 
\end{proof}

\subsection{Implicit function theorem and continuations}
    Applying the implicit function theorem to $f_c(z) - z$, we obtain 
    for the fixed points $p \approx 2$ and $q \approx -1$,  
    $$
    \frac{dp}{dc} = \frac{-1}{2p-1} \approx \frac{-1}3$$
    and 
    $$
    \frac{dq}{dc} = \frac{-1}{2q-1} \approx \frac13.$$
    
    Let $\rho$ be holomorphic, defined on a neighbourhood of some parameter $c_0$, and let $n\geq 1$.  Suppose that $f^n_{c_0}(z_0) = \rho(c_0)$. 
Let $G(z,c) = f^n_{c}(z) - \rho(c).$ 
Applying the implicit function theorem (if $Df^n_{c_0}(z_0) \ne 0$), we find a holomorphic function $h$ defined on a neighbourhood of $c_0$ with $h(c_0) = z_0$ and $G(h(c), c) = 0$. Moreover, 
    \begin{equation} \label{eqngenspeed}
        h'(c) = \frac{\rho'(c)}{Df^n_c(h(c))} - T_n(h(c)).
    \end{equation}



    We can also look at  perturbations of infinite orbits or, indeed, forward invariant sets. 
    The existence of \emph{holomorphic motions} is well known \cite{Jonsson_1998, Ruelle_2014, Ma__1983}. 
    We call a forward-invariant compact set on which $|Df_c^N|>2$ for some $N\geq1$ a \emph{repeller} for $f_c$. The final statement of the following can be obtained taking limits of \eqref{eqngenspeed}, with $\rho$ the constant function equal to $f^n_c(h(c))$. 
    \begin{lem} \label{lemHolMot}
        Let $K\subset \C$ be a repeller for $f_{c_0}$. There is an $r>0$ and a \emph{holomorphic motion} 
        $h : B(c_0,r) \times K \to \C$
        with $h(\cdot,z)$ holomorphic and $h(c, \cdot) :K \to h(c,K) = K_{c}$ a homeomorphism.
        Moreover, $K_{c}$ is hyperbolic for $f_{c}$ and 
        $$
        f_{c}\circ h(c,\cdot) = h(c,\cdot) \circ f_{c_0}.$$
        
        The partial derivative with respect to parameter satisfies
        $$\partial_1h(c,z) = -T_\infty(h(c,z)).$$ 
    \end{lem}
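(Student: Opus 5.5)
The plan is the standard holomorphic-motion argument for hyperbolic sets, followed by differentiation of the conjugacy equation.

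\emph{Construction of the motion.} Since $K$ is compact and forward invariant with $|Df_{c_0}^N|>2$ on $K$, after replacing the metric by an adapted one (or working with $f^N_{c_0}$) we may assume uniform expansion by a factor $\lambda>1$ on a neighbourhood $V$ of $K$. Inverse branches of $f_{c_0}$ along $K$ are then contractions on balls of a fixed radius $r_0$ around points of $K$. By continuity in the parameter, the same holds for $f_c$ with $|c-c_0|<r$, with a slightly weaker constant.

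We view a point of $K$ through its forward orbit $(z_k)=(f^k_{c_0}(z))$. For each $c$ we seek the unique sequence $(w_k)$ with $f_c(w_k)=w_{k+1}$ and $|w_k-z_k|<r_0$ for all $k$. This is a fixed point of the operator $\Phi_c$ on bounded sequences defined by
\[
(\Phi_c w)_k=g_{c,k}(w_{k+1}),
\]
where $g_{c,k}$ is the inverse branch of $f_c$ near $z_{k+1}$ sending it to a point near $z_k$. The operator $\Phi_c$ is a contraction in the sup norm on the ball of radius $r_0$ around $(z_k)$, and it maps this ball into itself once $r$ is small; this uses the shadowing estimate $|g_{c,k}(z_{k+1})-z_k|=O(|c-c_0|)$. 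The fixed point depends holomorphically on $c$, because it is a uniform limit of iterates of $\Phi_c$ applied to $(z_k)$, each of which is holomorphic in $c$.

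Set $h(c,z)=w_0$. Uniqueness of the fixed point gives the conjugacy $f_c\circ h(c,\cdot)=h(c,\cdot)\circ f_{c_0}$, since the shift of a solution is again a solution. It also gives injectivity: two points with the same $w_0$ would have orbits staying $2r_0$-close, which contradicts expansivity of $K$. Continuity in $z$ follows because finitely many steps of the contraction depend continuously on $z$ and the tail is uniformly small. Hence $h(c,\cdot)$ is a continuous injection from a compact set, so a homeomorphism onto $K_c$. Hyperbolicity of $K_c$ follows since $K_c$ lies in the neighbourhood where $|Df_c^N|>2$ persists.

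\emph{Derivative formula.} Differentiate $f^n_c(h(c,z))=h(c,f^n_{c_0}(z))$ in $c$. Since $\partial_c f^n_c(w)=Df^n_c(w)\,T_n(w)$, where $T_n$ comes from the chain rule as in \eqref{eqngenspeed}, we obtain
\[
\partial_1 h(c,z)=\frac{\partial_1 h(c,f^n_{c_0}(z))}{Df^n_c(h(c,z))}-T_n(h(c,z)).
\]
This is exactly \eqref{eqngenspeed} with $\rho(c)=h(c,f^n_{c_0}(z))$. The quantity $\partial_1 h$ is uniformly bounded on a slightly smaller parameter disc by Cauchy estimates, since $h$ is bounded. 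Meanwhile $|Df^n_c|\to\infty$ exponentially on $K_c$. Letting $n\to\infty$ gives $\partial_1 h=-T_\infty(h(c,z))$, and $T_\infty$ converges absolutely by the same exponential growth.

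\emph{Main obstacle.} The delicate step is uniformity: choosing $r_0$ and $r$ so that the inverse branches are defined and contracting along every orbit at once. This needs compactness of $K$ together with the adapted-metric trick to pass from $|Df^N|>2$ to one-step contraction. I would handle it by working with $f^N$ for the contraction argument and then recovering the intermediate iterates by continuity.
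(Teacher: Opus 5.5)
Your proposal is correct and is essentially the paper's argument. The paper cites the existence of holomorphic motions of hyperbolic sets, and your shadowing/contraction construction is the standard proof behind those references; the paper then obtains $\partial_1 h=-T_\infty(h(c,z))$ by letting $n\to\infty$ in \eqref{eqngenspeed}, as you do. The only difference is that the paper takes $\rho$ constant, so the approximating continuations have derivative exactly $-T_n$ and one must pass to the limit in the functions themselves, whereas your choice $\rho(c)=h(c,f^n_{c_0}(z))$ makes the identity exact for every $n$ and needs only the Cauchy bound on $\rho'$ and exponential growth of $|Df^n_c|$ on $K_c$.
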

    \begin{coro}\label{cor:holospeed}
        If $\theta \in (0, 1/12)$, if $c_0 = -2+\eps_0 \ne -2$ and $\eps_0$ is small then, letting 
        $$
        K = \{z \in \J_{c_0} : |f^n_{c_0}(z)| \geq |\eps_0|^{1/2+\theta} \text{ for all } n\geq0\},$$
        the lemma holds with $r = |\eps_0|^{1 + 4\theta}$ and, for $c \in B(c_0,r)$, for all $z \in K$, 
        $$|\partial_1h(c,z)| \leq |\eps_0|^{-1/2 -2\theta}.$$ 
    \end{coro}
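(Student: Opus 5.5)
We need to prove Corollary \ref{cor:holospeed}. First check that $K$ is a repeller for $f_{c_0}$, then obtain the holomorphic motion from Lemma~\ref{lemHolMot} on a ball of radius $r=|\eps_0|^{1+4\theta}$, and finally bound the speed $T_\infty$ via Lemma~\ref{lemSplitreturns}. The two analytic inputs are Lemma~\ref{lem:expandingreturns} (expansion along returns) and Lemma~\ref{lemSplitreturns} (the speed bound $|T_n|\le C(\inf_j|f^j(z)|)^{-1}$).

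\emph{Step 1: $K$ is a repeller.} The set $K$ is compact and forward invariant by definition. Since $1/2+\theta<2/3$ (because $\theta<1/12$), every point of $K$ satisfies $|f^j_{c_0}(z)|\geq|\eps_0|^{1/2+\theta}>|\eps_0|^{2/3}$ for all $j$. Lemma~\ref{lem:expandingreturns} then gives $|Df^{n}_{c_0}(z)|>2^{n/2}$ at every return time to $B(0,1.1)$. Outside $B(0,1.1)$ the derivative is at least $2.2$ per step. Combining these, $|Df^N_{c_0}|>2$ on $K$ for some uniform $N$; one may need to handle the last incomplete excursion, where the outside factors only help. Lemma~\ref{lemHolMot} therefore yields a holomorphic motion on some small ball.

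\emph{Step 2: the speed bound.} I plan to show that the motion persists on $B(c_0,r)$ and that $h(c,K)$ stays in the region $\{|w|\geq \tfrac12|\eps_0|^{1/2+\theta}\}$ for all $c$ in this ball. This will be a continuity (bootstrap) argument. While the moved points satisfy $|f^j_c(w)|\ge \tfrac12|\eps_0|^{1/2+\theta}$, which is still $>|c+2|^{2/3}$ since $|c+2|\sim|\eps_0|$, Lemma~\ref{lemSplitreturns} gives
$$|\partial_1 h|=|T_\infty|\le 2C|\eps_0|^{-1/2-\theta}\le |\eps_0|^{-1/2-2\theta}$$
for $\eps_0$ small. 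Integrating over a parameter path of length at most $r$, the displacement of any point is at most $|\eps_0|^{1+4\theta}|\eps_0|^{-1/2-2\theta}=|\eps_0|^{1/2+2\theta}$, which is much smaller than $\tfrac12|\eps_0|^{1/2+\theta}$. The same estimate applies to every orbit point $f^j_c(h(c,z))=h(c,f^j_{c_0}(z))$, using conjugacy and invariance of $K$. Hence the bootstrap hypothesis is never saturated, and the motion extends across the whole ball by analytic continuation. For the extension I will use Step 1's uniform expansion at $c$, which persists because Lemma~\ref{lem:expandingreturns} holds uniformly for the parameter $c$, together with the implicit function theorem in the form \eqref{eqngenspeed}, or equivalently the limit formula in Lemma~\ref{lemHolMot}.

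\emph{Main obstacle.} The hard step is the bootstrap continuation: ensuring that the Lemma~\ref{lemHolMot} motion really exists on the full radius $r$ and not just on some small ball. The constants from Lemma~\ref{lemSplitreturns} must be uniform in $c$, and we need $K_c$ to remain a repeller throughout. My approach is to parametrise along radial segments from $c_0$ and to take the supremum of radii on which both properties hold: the motion exists, and points stay $\tfrac12|\eps_0|^{1/2+\theta}$-away from $0$. The displacement estimate above shows that this supremum cannot be less than $r$.
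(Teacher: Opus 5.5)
Your proposal is correct and is essentially the paper's argument. Take the maximal ball about $c_0$ on which the motion exists and the speed bound holds; the displacement bound $r\cdot|\eps_0|^{-1/2-2\theta}=|\eps_0|^{1/2+2\theta}\ll|\eps_0|^{1/2+\theta}$ keeps all moved orbit points at distance at least $\frac12|\eps_0|^{1/2+\theta}$ from $0$, and Lemma~\ref{lemSplitreturns} then gives the strictly better bound $2C|\eps_0|^{-1/2-\theta}$, so the ball can be enlarged unless its radius is at least $r$.

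Your explicit check that $K$ is a repeller is a useful addition that the paper leaves implicit. One small imprecision: the final incomplete excursion starts with the possibly small factor $2|w|\geq 2|\eps_0|^{1/2+\theta}$, so outside factors alone do not handle it, and the $N$ with $|Df^N_{c_0}|>2$ on $K$ must be allowed to depend on $\eps_0$. This is harmless.
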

    \begin{proof}
        Let $r'$ be the radius of the maximal ball, centred at $c_0$, on which $h$ is defined and the derivative estimate holds. By Lemmas \ref{lemHolMot} and \ref{lemSplitreturns}, $r'>0$. Suppose $r'<r$ and $c \in \overline{B(c_0,r')}$. Then 
        $$|h(c,z)-z| \leq C|\eps_0|^{-1/2-2\theta} r' < C|\eps_0|^{1/2 + 2\theta}$$
        so
        $|h(c,z)| \geq \frac12 |\eps_0|^{1/2 + \theta}$ for all $z\in K$. 
        By Lemma~\ref{lemSplitreturns},
        $|T_\infty(h(c,z))|$ is bounded by $2C |\eps_0|^{-1/2 - \theta}$. Hence, on some larger ball, $h$ exists and the derivative estimate holds, contradicting the definition of $r'$.
    \end{proof}

\section{Geometric estimates} \label{sec:Geom}

We continue to consider $f = f_c : z \mapsto z^2 +c$, where $c = -2 + \eps$, with $\eps$ small and complex.
We denote one-dimensional Lebesgue measure on $\R$ by $\Leb$ and denote by $\diam(U)$ the diameter of a set $U$. 

\subsection{Ellipses containing the Julia set} \label{secEA}
In the extremal case $\eps = 0$, the Julia set $\J_c$ of $f$ is the real interval $[-2,2]$. 
We define a family of ellipses $\cE_a$, $a>0$, 
$$\cE_a = \{z : |z-2| + |z + 2| < 4+a\}.$$
The boundaries $\partial \cE_a$ of these ellipses are Green's curves and form an invariant family. They are the images of circles centred on 0 by the Joukowski map $z \mapsto z + 1/z$. 
\begin{lem}[\cite{Bara_ski_1998}]
If $a > 0$, then 
    \begin{equation}\label{eqn:greens}
        f_{-2}(\partial \cE_a) = \partial \cE_{\frac{(a+4)^2}2 - 8}.
    \end{equation}
\end{lem}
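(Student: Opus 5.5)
We prove \eqref{eqn:greens} by conjugating $f_{-2}$ to $w\mapsto w^2$ through the Joukowski map. Set $J(w)=w+1/w$. For $c=-2$ the identity
$$ f_{-2}(J(w)) = \left(w+\frac1w\right)^2-2 = w^2+\frac{1}{w^2} = J(w^2) $$
holds for every $w\neq 0$, so $J$ semiconjugates $w\mapsto w^2$ on $\{|w|>1\}$ to $f_{-2}$ on $\C\setminus[-2,2]$. Hence the image under $f_{-2}$ of $J(\{|w|=R\})$ is $J(\{|w|=R^2\})$, and the whole statement reduces to matching the level curves $J(\{|w|=R\})$, $R>1$, with the boundaries $\partial\cE_a$.

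The first step is to identify $J(\{|w|=R\})$. Writing $w=Re^{i\phi}$ gives
$$ J(w)=\left(R+\tfrac1R\right)\cos\phi + i\left(R-\tfrac1R\right)\sin\phi, $$
an ellipse with semi-axes $\alpha=R+1/R$ and $\beta=R-1/R$. Since $\alpha^2-\beta^2=4$, its foci are $\pm 2$, and the sum of distances to the foci equals the major axis $2\alpha$. Therefore
$$ J(\{|w|=R\})=\partial\cE_a \quad\text{with}\quad 4+a = 2\left(R+\tfrac1R\right). $$
The map $R\mapsto R+1/R$ is increasing on $(1,\infty)$, so every $a>0$ corresponds to exactly one $R>1$.

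The second step computes the image. Under $f_{-2}$, the curve $\partial\cE_a$ goes to $J(\{|w|=R^2\}) = \partial\cE_{a'}$ with $4+a' = 2(R^2+R^{-2})$. Using $R^2+R^{-2}=(R+R^{-1})^2-2$ and $R+R^{-1}=(a+4)/2$, we get
$$ 4+a' = 2\left(\frac{(a+4)^2}{4}-2\right) = \frac{(a+4)^2}{2}-4, $$
that is, $a'=\frac{(a+4)^2}{2}-8$, which is \eqref{eqn:greens}. Equality of the sets, and not merely inclusion, holds because $w\mapsto w^2$ maps the circle of radius $R$ onto the circle of radius $R^2$.

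No step is a real obstacle. The only point needing care is the normalization linking the focal-sum parameter $4+a$ to the major axis $2(R+1/R)$; once that is fixed, the result is a one-line algebraic substitution.
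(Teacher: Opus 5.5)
Your proof is correct, but it follows a different route from the paper's. The paper never parametrizes the ellipses. Writing $z=re^{i\theta}$, it combines the parallelogram identity $|z-2|^2+|z+2|^2=2r^2+8$ with the factorization $|f_{-2}(z)+2|+|f_{-2}(z)-2|=|z|^2+|z^2-4|=r^2+|z-2||z+2|$. This amounts to the pointwise identity
\[
|f_{-2}(z)+2|+|f_{-2}(z)-2|=\tfrac12\bigl(|z-2|+|z+2|\bigr)^2-4,\qquad z\in\C,
\]
from which the lemma is read off using the focal-sum definition of $\cE_a$, with $a'=\frac{(a+4)^2}{2}-8$. You instead use the Joukowski semiconjugacy $f_{-2}\circ J=J\circ(w\mapsto w^2)$ together with $J(\{|w|=R\})=\partial\cE_a$ for $4+a=2(R+1/R)$.

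Your argument is more conceptual: it explains why these confocal ellipses are the Green's curves the paper mentions. It also gives equality of sets immediately, since $w\mapsto w^2$ maps $\{|w|=R\}$ onto $\{|w|=R^2\}$. The cost is the extra, standard step of identifying $J$-images of circles as ellipses with foci $\pm 2$.

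The paper's computation is shorter and purely algebraic. As written, it only checks $f_{-2}(\partial\cE_a)\subset\partial\cE_{a'}$ explicitly. However, because the identity holds for every $z$, it actually gives $f_{-2}^{-1}(\partial\cE_{a'})=\partial\cE_a$, hence equality by surjectivity of $f_{-2}$. By monotonicity of $s\mapsto s^2/2-4$ on $s\geq 0$, the same identity also yields $f_{-2}^{-1}(\cE_{a'})=\cE_a$ for the solid ellipses.
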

\begin{proof}
Indeed, let $z = re^{i\theta} \in \partial \cE_a$, then 
\begin{eqnarray*}
    (4+a)^2 & = & |z-2|^2 + |z+2|^2+2|z-2||z+2| \\
& = & 2r^2 +8 - 4r\cos \theta + 4r \cos \theta + 2|z-2||z+2|.
\end{eqnarray*}
    so $
    (4+a)^2/2 - 4 =
    r^2 + |z-2||z+2|.  $
On the other hand,
\[
|f_{-2}(z) + 2| + |f_{-2}(z)-2| = r^2 + |z-2||z+2|,
\]
    showing \eqref{eqn:greens}.
\end{proof}
    
\begin{lem} \label{lemJellipse}
    If $0 < |\eps| \leq a < 2$, 
    then $f_c(\partial \cE_a) \cap \overline{\cE_a} = \emptyset.$ 
    Moreover,
    the Julia set $\J_c$ is contained in  $\cE_{|\eps|}$ and 
\begin{equation}\label{eqn:imband}
    \Im(\J_c) \subset [-2\sqrt{|\eps|}, 2\sqrt{|\eps|}].
\end{equation}
\end{lem}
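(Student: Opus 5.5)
The plan is to compare $f_c$ with $f_{-2}$ on the Green's curves $\partial\cE_a$, using the invariance \eqref{eqn:greens}. Fix $z\in\partial\cE_a$. By \eqref{eqn:greens}, $f_{-2}(z)\in\partial\cE_{a'}$ with $a'=\frac{(a+4)^2}{2}-8 = 4a + a^2/2$. Since $f_c(z)=f_{-2}(z)+\eps$, the triangle inequality gives
$$|f_c(z)-2|+|f_c(z)+2| \geq |f_{-2}(z)-2|+|f_{-2}(z)+2| - 2|\eps| = 4+4a+a^2/2-2|\eps|.$$
If $|\eps|\le a$, the right-hand side is at least $4+2a+a^2/2>4+a$. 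Hence $f_c(z)\notin\overline{\cE_a}$, which proves the first claim.

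For the containment $\J_c\subset\cE_{|\eps|}$, the goal is to show that $\cE_a$ contains the filled Julia set. The steps are as follows.

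First, the quantity $g(w)=|w-2|+|w+2|$ satisfies $g(f_{-2}(w)) = r^2+|w-2||w+2|$ with $r=|w|$; this was computed in the proof of \eqref{eqn:greens}. Writing $g(w)=4+b$, the same computation shows $g(f_{-2}(w)) = 4+4b+b^2/2$ for every $w$ outside $[-2,2]$, since the formula holds on each level curve.

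Second, for $w$ with $g(w)\geq 4+a$ we get
$$g(f_c(w))\geq 4+4b+b^2/2-2|\eps| \geq 4+2b+b^2/2 > 4+b.$$
So the exceedance $b$ grows at least like $b\mapsto 2b$.

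Third, iterating, any point outside $\cE_a$ has $g(f_c^n(w))\to\infty$. It therefore escapes, and lies in the basin of infinity, not in $\J_c$. This gives $\J_c\subset\overline{\cE_a}$ for every $a\geq|\eps|$.

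Finally, applying this with $a$ slightly larger than $|\eps|$ and using the strict inequality at $a=|\eps|$ itself yields $\J_c\subset\cE_{|\eps|}$; the boundary points escape by the same strict growth. One has to make sure $a<2$ is not needed for the escape. It is not: the growth estimate holds for all $a\ge|\eps|$.

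For \eqref{eqn:imband}, a point of $\cE_a$ has imaginary part at most the semi-minor axis $\sqrt{((4+a)/2)^2-4} = \sqrt{a+a^2/4}$. At $a=|\eps|$ this is $\leq \sqrt{|\eps|}\,\sqrt{1+|\eps|/4} < 2\sqrt{|\eps|}$.

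The only delicate point is justifying $g(f_{-2}(w))=4+4b+b^2/2$ off the level curves, but this is the identity already established pointwise in the previous lemma. So I expect no real obstacle; the main care is in the bookkeeping of the constant $2|\eps|$ against the growth $4b$.
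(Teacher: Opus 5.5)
Your proof is correct and uses the same mechanism as the paper. The confocal ellipses are invariant under $f_{-2}$, via $g\circ f_{-2}=g^2/2-4$ for $g(w)=|w-2|+|w+2|$. Translating by $\eps$ changes $g$ by at most $2|\eps|$; this is the paper's remark that the gap between confocal ellipses is attained along the real axis. Where the paper simply notes $f_c(\cE_a)\supset\cE_a$, you make the escape explicit by showing that the exceedance $g-4$ at least doubles on $\{g\geq 4+|\eps|\}$. That set already contains the boundary of $\cE_{|\eps|}$, so your detour through $\overline{\cE_a}$ and ``slightly larger $a$'' is unnecessary.

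There is one arithmetic slip. The semi-minor axis of $\cE_a$ is $\sqrt{((4+a)/2)^2-4}=\sqrt{2a+a^2/4}$, not $\sqrt{a+a^2/4}$. The conclusion is unaffected: at $a=|\eps|<2$ this equals $\sqrt{|\eps|}\,\sqrt{2+|\eps|/4}<2\sqrt{|\eps|}$, which only needs $|\eps|<8$, so the bound on $\Im(\J_c)$ still holds.
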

\begin{proof}
The shortest path between the boundaries of two such ellipses is along the real axis, so 
$$\dist { \cE_a , \partial f_{-2}(\cE_a)} =  \frac{\frac{(a+4)^2}2 - 8 -a}{2} = 3a/2 + a^2/4.
$$
Consequently, if $|\eps| \leq a  < 2$, then 
$
f_{-2+\eps}(\cE_a) = \eps + f_{-2}(\cE_a)$ contains $\cE_a$, whence $\J_c$ is contained in  $\cE_{a}$.
    Noting that $|2\sqrt{a} i \pm 2| = 2\sqrt{1+a} > 2 + a/2$, no point with imaginary part $2\sqrt{a}$ lies in $\cE_a$. Hence
\eqref{eqn:imband} holds.
\end{proof}

\subsection{Fundamental domain}\label{sec:funddom}

The two geometric rays $\{\arg(z) = \frac{2\pi}{3}, |z|\geq 1\}$ and $\{\arg(z) = \frac{4\pi}3, |z|\geq 1\}$ are mapped to each other by $z \mapsto z^2$. Their image under the Joukowski map $z \mapsto z + \frac1z$ is an $f_{-2}$-invariant Jordan curve $\Gamma_0$ containing $q$ and $\infty$ (viewed on the Riemann sphere). 
We note that $\Gamma_0$ is orthogonal to the family of ellipses $\{\cE_a\}_a$ with foci $\pm2$ (defined in \S\ref{secEA}).  


On $B(\Gamma_0, \rho)$, $|Df_{-2}| > 1.6$ if $\rho \in (0, 0.2)$. 
Hence, if $|\eps| < 0.1$ then 
$$\eps + f_{-2}(B(\Gamma_0, 2|\eps|)) \supset B(\Gamma_0, 2|\eps|)$$
and there is an $f_c$-forward-invariant curve, with $c = -2+\eps$,
$$
\Gamma_\eps \subset B(\Gamma_0, 2|\eps|).$$

\begin{remark} If $c\notin \R$, then $\Gamma_\eps$ will spiral in to $q$. Note, the spiralling is negligible since $\arg(q)$ is small; the radius decreases by a factor of something like $2^{-6\pi/\Im(\eps)}$ each full rotation.  If $c \in \M$, then $\Gamma_\eps$ is the union of the two dynamical rays landing at $q$, the image of the geometric rays above under the canonical Riemann map from the exterior of the unit disc to the exterior of the Julia set. We allow $c$ to leave the Mandelbrot set, resulting in disconnected Julia sets. 
    \end{remark}

The curves $\pm \Gamma_\eps$ divide the solid ellipse $\cE_{a}$ into simply connected regions. If $a$ is fixed and small and if $|\eps|$  is small enough, the  region containing the critical point $0$  is compactly contained in $B(0,1.1)$. 
We denote it by $U=U(c)$, a \emph{fundamental domain} of $f_c$. 
Recalling Lemma~\ref{lemJellipse}, $\J_c \subset \cE_a$. Every point of $\J_c$ enters ${U} \cup \{p, q\}$ under iteration.

\subsection{Mandelbrot set bound} \label{sec:ManBound}
Let $x,y \in \R$, $0 \leq y <x$, and consider
 $\eps = x+iy$ small. By Lemma~\ref{lemJellipse}, $\J_c \subset \cE_{2x}$ and \eqref{eqn:imband} bounds its imaginary part by $2\sqrt{2x}$.  The sets $\zeta^n(\cE_{2x} \cap B(0,1.1))$ form a chain of topological discs leading towards $p$. If $y$ were equal to $0$, the chain would be contained in a cone  $\cC$ with vertex $2$ of narrow aperture  $C\sqrt{x}$, with $C>0$ a universal constant. 

    Applying Lemma~\ref{lem:thirdquarter} and equation~\eqref{eqngenspeed}, the chain near $2$ is contained in $B(\cC, y/2)$ and so has imaginary part bounded by $y/2 + 10Cx^{3/2}$ in $B(2, 5x)$. 
    If $y > 20Cx^{3/2}$, then $2 - (x+iy)$ is not contained in the chain and so is not in the filled Julia set. Hence $-2 + x+iy \notin \M$. 
 
\subsection{A smaller inducing domain $U_M$} \label{secumqm}
For $M>0$, the collection of curves $f_c^{-M}(\Gamma_\eps)$ divide  $\cE_{1/M}$ into regions. Denote the one containing $0$ by $U_M(c)$. It exists, and 
 $f^j_c(\partial U_M(c)) \cap U(c) = \emptyset$ for all $j\geq 1$, 
if $\eps$ is small enough.

Denote by $\pm q_M$ the preimages of $q$ in $f_c^{-M}(\Gamma_\eps)$, so $q_M=q_M(c)  \in \partial U_M(c)$. We choose $q_M$ to have positive real part when $\eps$ is small. 
Note that, if $c=-2$ and $1<j<  M-2$,
\begin{equation}\label{eqnqmorbit}
    f_{-2}(q_M) < q <  0 < q_M < -q = f^{M-1}_{-2}(q_M) < f^{j+1}_{-2}(q_M) < f^j_{-2}(q_M) < 2.
\end{equation}

    Let $K_M(c)$ denote the set of points in $\J_c$ which never enter $U_M(c)$. 
    As $K_M(-2)$ is a repeller for $f_{-2}$, Lemma~\ref{lemHolMot} gives an $\eps_0>0$ and a holomorphic motion $h: B(-2, \eps_0) \times K_M(-2) \to \C$, 
    $$
    f_c \circ h(c,\cdot) = h(c,\cdot) \circ f_{-2}.$$
    Moreover, 
        $$|\partial_1h(c,z)| = |T_\infty(h(c,z))| $$ 
        is uniformly bounded by (taking the limit of) the first part of Lemma~\ref{lemSplitreturns}. As a consequence, we obtain:
    
        \begin{lem} \label{lemKMspeed}
            Given $M$, there exist $\eps_1>0$, $C>1$ such that, for $c=-2+\eps$, $|\eps|<\eps_1$, 
            $$
            \frac1n \left|\log |Df^n_c(h(c,z))| 
            -
            \log |Df^n_{-2}(z)|  \right| \leq C|\eps|$$
            for all $z \in K_M(-2)$ and $n\geq 1$.
        \end{lem}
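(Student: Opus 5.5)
We prove Lemma~\ref{lemKMspeed} by writing the log-derivative as a Birkhoff sum and controlling each term with the holomorphic motion. By the chain rule and the conjugacy $f_c\circ h(c,\cdot)=h(c,\cdot)\circ f_{-2}$,
$$\log |Df^n_c(h(c,z))| - \log|Df^n_{-2}(z)| = \sum_{j=0}^{n-1}\Bigl(\log|2h(c,f^j_{-2}(z))| - \log|2 f^j_{-2}(z)|\Bigr).$$
Here we use $h(c,f^j_{-2}(z)) = f^j_c(h(c,z))$. The set $K_M(-2)$ is forward invariant, so each $w_j:=f^j_{-2}(z)$ lies in $K_M(-2)$. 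It therefore suffices to find $C$ with
$$\bigl|\log|h(c,w)|-\log|w|\bigr|\le C|\eps| \quad\text{for all } w\in K_M(-2);$$
summing over $j$ then gives the bound $Cn|\eps|$.

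\textbf{Step 1: the set stays away from the critical point.} Points of $K_M(-2)$ never enter $U_M(-2)$, which is an open neighbourhood of $0$. Hence there is $r_M>0$ with $|w|\ge r_M$ for all $w\in K_M(-2)$.

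\textbf{Step 2: a uniform bound on the speed of the motion.} As noted just before the statement, $|\partial_1 h(c,w)| = |T_\infty(h(c,w))|$. We bound this using the first part of Lemma~\ref{lemSplitreturns}, passing to the limit $n\to\infty$ along the orbit of $h(c,w)$. This requires the orbit of $h(c,w)$ to stay at distance at least, say, $r_M/2$ from $0$. At $c=-2$ this holds by Step 1.

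To extend it to nearby $c$, we run the continuity argument of Corollary~\ref{cor:holospeed}. Take the maximal radius $r'\le\eps_1$ on which $|\partial_1 h|\le 2C'/r_M$, where $C'$ is the constant of Lemma~\ref{lemSplitreturns}. On that ball $|h(c,w)-w|\le 2C'|\eps|/r_M < r_M/2$ once $\eps_1$ is small. The forward orbit of $h(c,w)$ is the set $\{h(c,f^j_{-2}(w))\}$, which moves by the same amount. So that orbit stays outside $B(0,r_M/2)$, and also outside $B(0,|c+2|^{2/3})$, and Lemma~\ref{lemSplitreturns} gives $|T_\infty|\le 2C'/r_M$. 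This bound then persists on a slightly larger ball, contradicting the maximality of $r'$. The conclusion is that $|h(c,w)-w|\le C''|\eps|$ uniformly in $w\in K_M(-2)$ and $|\eps|<\eps_1$.

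\textbf{Step 3: comparing logarithms.} Since $|w|\ge r_M$ and $|h(c,w)-w|\le C''|\eps|\le r_M/2$,
$$\bigl|\log|h(c,w)|-\log|w|\bigr| \le \frac{|h(c,w)-w|}{\min(|w|,|h(c,w)|)}\le \frac{2C''}{r_M}\,|\eps|.$$
Summing over $j$ and dividing by $n$ completes the proof.

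The main obstacle is Step 2. It requires applying Lemma~\ref{lemSplitreturns} at the parameter $c$ to the moved orbit rather than at $c_0=-2$, while keeping both the uniformity in $w$ and the self-consistent lower bound on the orbit. This is settled by the bootstrap argument above, copied from Corollary~\ref{cor:holospeed}. Alternatively, one can shrink the domain of the motion given by Lemma~\ref{lemHolMot} and use compactness together with Cauchy estimates to get a uniform Lipschitz bound in $c$.
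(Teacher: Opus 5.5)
Your proof is correct and follows the paper's route. The paper deduces the lemma directly from a uniform bound on the speed $|\partial_1 h| = |T_\infty(h)|$, obtained by taking the limit in the first part of Lemma~\ref{lemSplitreturns}; your Birkhoff-sum decomposition, the bootstrap that keeps the moved orbit away from $0$, and the logarithm comparison spell out the details the paper leaves implicit.
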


        For $M$ large, the modulus of the annulus $U(c)\setminus U_M(c)$ is large. 
        \begin{lem}\label{lem:UMext}
            For $M$ large enough, if an open set $\hat W$ is mapped by $f_c^n$ biholomorphically to $U(c)$ and $W \subset \hat W$ is mapped to $U_M(c)$, 
            then
            $$
            B(W, 2\diam(W)) \subset \hat W.
            $$
        \end{lem}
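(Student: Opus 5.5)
The plan is to deduce the statement from the Koebe distortion theorem, applied to the univalent branch $g := (f_c^n|_{\hat W})^{-1} : U(c) \to \hat W$. The key input is that $U_M(c)$ is small and deep inside $U(c)$: the annulus $U(c)\setminus \overline{U_M(c)}$ has modulus tending to infinity as $M\to\infty$, uniformly for $|\eps|$ small.

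\textbf{Step 1: geometry of $U_M(c)$.} For $c=-2$, by \eqref{eqnqmorbit} the points $\pm q_M$ are the preimages of $q$ under $f_{-2}^{M}$ nearest $0$. Hence $|q_M(-2)|\sim 2^{-M}$: near $2$ the backward branch $\zeta$ contracts by about $1/4$, and the last square-root step halves the exponent. By the same reasoning, $U_M(-2)$ is contained in a disc $B(0,r_M)$ with $r_M\to 0$. For this we use that $\cE_{1/M}$ has width $\sim M^{-1/2}$ near $0$, and that the curves $f^{-M}(\Gamma_0)$ near $0$ are preimages of $\Gamma_0$ near $\pm 2$ under the chain $\zeta^{M-1}$ followed by the square root, where they cut $\cE_{1/M}$ into pieces of diameter $\to 0$. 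Meanwhile $U(c)$ contains a fixed disc $B(0,s)$ for all small $\eps$, since $\Gamma_\eps$ is $2|\eps|$-close to $\Gamma_0$, which stays away from $0$. By continuity of the finitely many curves $f_c^{-M}(\Gamma_\eps)$ in $c$ (holomorphic motion, or simply the implicit function theorem, $M$ being fixed), for $|\eps|$ small in terms of $M$ we get $U_M(c)\subset B(0,2r_M)\subset B(0,s)\subset U(c)$. Choosing $M$ so that $2r_M/s$ is tiny makes the modulus of $U(c)\setminus\overline{U_M(c)}$ as large as we like. In particular some disc $B(0,R)\subset U(c)$ contains $U_M(c)$ with $R/\diam U_M(c)\ge R/(4r_M)$ huge.

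\textbf{Step 2: Koebe.} Apply the Koebe one-quarter and distortion theorems to $g$ on $B(0,s)$, and set $W=g(U_M(c))$. Let $d=|g'(0)|$. Distortion on $B(0,2r_M)$, where $2r_M/s$ is small, gives $\diam W\le C\, d\, r_M$ with $C$ close to $2$. On the other hand, the one-quarter theorem on $B(0,s)$ gives $\hat W\supset g(B(0,s))\supset B(g(0), d s/4)$. Every point of $B(W,2\diam W)$ lies within $3\diam W \le 3C d r_M$ of $g(0)\in W$. So it suffices to have $3Cr_M < s/4$, which holds for $M$ large. This gives $B(W,2\diam W)\subset\hat W$.

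\textbf{Main obstacle.} The crux is Step 1: the bound $\diam U_M(c)\to 0$ as $M\to\infty$, uniformly for $c$ near $-2$. The statement fixes $M$ large first and then allows all small $\eps$, so uniformity is needed only after $M$ is fixed, and continuity in $c$ of the finitely many curves bounding $U_M(c)$ suffices. Even so, one must check that the complex $\Gamma_\eps$ (which spirals for nonreal $c$) does not make $U_M(c)$ leak. I would handle this using the inclusion $\Gamma_\eps\subset B(\Gamma_0,2|\eps|)$ together with the fact that $f_c^{-M}$ is a finite composition of branches, uniformly continuous in $c$ on compact sets away from the critical value.
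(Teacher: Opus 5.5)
Your proposal is correct and follows the paper's route. The paper gives no proof beyond the remark that the annulus $U(c)\setminus U_M(c)$ has large modulus for $M$ large, and your Steps~1--2 make this Koebe-space argument explicit, via the one-quarter and distortion theorems for $g=(f_c^n|_{\hat W})^{-1}$ on a fixed disc $B(0,s)\subset U(c)$. (One harmless slip: with $U_M(c)\subset B(0,2r_M)$ the distortion bound gives $\diam W\le (4+o(1))\,d\,r_M$ rather than $(2+o(1))\,d\,r_M$, so you need roughly $r_M<s/48$, which still holds for $M$ large since $r_M\to 0$.)
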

        \subsection{Nice sets}
        
\begin{defi}
    An open set $W$ is called 
    \emph{nice} (or \emph{regularly returning}) 
    if
    $f^n(\partial W) \cap W = \emptyset$ for all $n\geq0$. 
\end{defi}
Nice sets have the useful
    property, preimages are \emph{nested or disjoint}: If $A$ is a connected component of $f^{-n}(W)$ and $B$ of $f^{-m}(W)$, $0\leq n \leq m$, then $B \subset A$ or $A \cap B = \emptyset.$ 

    \begin{lem} \label{lemUareNice}
        Both $U(c)$ and $U_M(c)$ are nice sets. 
    \end{lem}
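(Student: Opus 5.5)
The plan is to check the defining property of niceness, $f^n(\partial W)\cap W=\emptyset$ for $n\geq 0$, directly from the construction of $U(c)$ and $U_M(c)$. The key point is that each boundary consists of two kinds of pieces: pieces of the curves $\pm\Gamma_\eps$ (respectively of $f_c^{-M}(\Gamma_\eps)$), and arcs of ellipses. The case $n=0$ is trivial, since $W$ is open. For $n\geq1$ I would treat the two kinds of boundary pieces separately.

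For $U=U(c)$, the boundary lies in $(\Gamma_\eps\cup-\Gamma_\eps)\cup\partial\cE_a$.
\begin{itemize}
\item \emph{Curve pieces.} The curve $\Gamma_\eps$ is $f_c$-forward invariant, and $f_c(-\Gamma_\eps)=f_c(\Gamma_\eps)\subset\Gamma_\eps$. So every forward image of a point of $\pm\Gamma_\eps$ lies in $\Gamma_\eps$. But $\Gamma_\eps$ is one of the curves bounding the complementary regions of $\cE_a$, so it is disjoint from the open region $U$.
\item \emph{Elliptic pieces.} Lemma~\ref{lemJellipse} with $|\eps|\leq a<2$ gives $f_c(\partial\cE_a)\cap\overline{\cE_a}=\emptyset$. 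It also gives $f_c(\cE_a)\supset\cE_a$, i.e.\ $f_c^{-1}(\overline{\cE_a})\subset\overline{\cE_a}$. Hence the complement of $\overline{\cE_a}$ is forward invariant, and all later images of $\partial\cE_a$ also avoid $\overline{\cE_a}\supset U$.
\end{itemize}
This handles $U$.

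For $U_M$, the boundary lies in $f_c^{-M}(\Gamma_\eps)\cup\partial\cE_{1/M}$. The paper states that $f_c^j(\partial U_M)\cap U=\emptyset$ for all $j\geq1$, when $\eps$ is small. Since $U_M\subset U$, this already gives the claim. If one wants to justify that stated fact:
\begin{itemize}
\item \emph{Curve pieces.} A point of $f_c^{-M}(\Gamma_\eps)$ has $f_c^j$-image in $\Gamma_\eps$ for $j\geq M$, which is disjoint from $U$ as above. For $1\leq j<M$ the image lies in $f_c^{-(M-j)}(\Gamma_\eps)$. Near $c=-2$, by \eqref{eqnqmorbit} and continuity, the relevant pieces of these curves sit near the points $f^j_{-2}(q_M)$, which lie outside $\overline{U(-2)}$. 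Away from the real axis these curves leave the ellipse region containing $U$. So for $\eps$ small (depending on $M$) they miss $U$.
\item \emph{Elliptic pieces.} The images of $\partial\cE_{1/M}$ lie outside $\overline{\cE_{1/M}}$. I must check they never re-enter $U_M$. This follows because $\mathbb{C}\setminus\overline{\cE_{1/M}}$ is forward invariant, by Lemma~\ref{lemJellipse} with $a=1/M\geq|\eps|$.
\end{itemize}

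The main obstacle is the intermediate iterates $1\leq j<M$ of the curve pieces of $\partial U_M$. They require a uniform separation from $U$ at $c=-2$, which then persists under perturbation. I would verify this using the real ordering \eqref{eqnqmorbit} together with the holomorphic motion of Lemma~\ref{lemHolMot} applied to the finite orbit of $q_M$.
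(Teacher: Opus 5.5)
Your argument for $U(c)$ is the paper's: the first statement of Lemma~\ref{lemJellipse} handles the elliptic arcs, and $f_c(\pm\Gamma_\eps)\subset\Gamma_\eps$ handles the curve pieces.

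For $U_M(c)$ there is a gap. Quoting the assertion of \S\ref{secumqm} that $f_c^j(\partial U_M)\cap U=\emptyset$ is only a reduction, since the paper states it without proof as part of the construction. Your proposed verification of it then fails. You want a uniform separation from $\overline{U(-2)}$, at $c=-2$, of the images under $f_c^j$ ($1\le j<M$) of the curve pieces, and you want continuity to carry it to nearby $c$. At $j=M-1$ no such separation exists. By \eqref{eqnqmorbit}, $f_{-2}^{M-1}(q_M)=-q\in\partial U(-2)$. Indeed $f_c^{M-1}$ maps these curve pieces into $-\Gamma_\eps$, one of the curves cutting out $U(c)$. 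That these images miss $U(c)$ is an exact consequence of the construction, and no perturbative argument in $c$ can supply it. Your elliptic bullet also only shows that images of $\partial\cE_{1/M}$ avoid $U_M$, not $U$. So it does not establish the assertion you set out to justify either.

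The missing observation is that niceness only asks you to avoid $U_M$ itself, and then nothing perturbative is needed.
\begin{itemize}
\item Since $f_c(\Gamma_\eps)\subset\Gamma_\eps$, a point $w$ with $f_c^{M-j}(w)\in\Gamma_\eps$ satisfies $f_c^M(w)\in\Gamma_\eps$. Hence $f_c^{-(M-j)}(\Gamma_\eps)\subset f_c^{-M}(\Gamma_\eps)$ for $0\le j\le M$, and likewise $\Gamma_\eps\subset f_c^{-M}(\Gamma_\eps)$.
\item Consequently, if $z\in\partial U_M\cap f_c^{-M}(\Gamma_\eps)$, then $f_c^j(z)\in f_c^{-M}(\Gamma_\eps)$ for every $j\ge 0$.
\item By definition $U_M$ is a component of $\cE_{1/M}\setminus f_c^{-M}(\Gamma_\eps)$, so it is disjoint from that set.
\end{itemize}
Combine this with your elliptic bullet: for $|\eps|\le 1/M$, images of $\partial\cE_{1/M}$ never return to $\cE_{1/M}\supset U_M$. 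This proves niceness of $U_M$ word for word as for $U$. That is all the paper's ``forward-invariance of the dynamical rays \ldots\ follows from construction'' amounts to. Neither \eqref{eqnqmorbit} nor the holomorphic motion is required, and the ``main obstacle'' you identify disappears.
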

    \begin{proof}
        Noting the first statement of Lemma~\ref{lemJellipse} and forward-invariance of the dynamical rays through $\pm q$, this follows from construction.
    \end{proof}

    In particular, if $W$ is $U$ or $U_M$ and $\vp$ is the first return map to $W$, one 
    connected component of its domain may contain $0$; it is called the \emph{central} branch domain and is mapped with degree 2 by $\vp$ onto $W$. Each non-central component  is mapped by $\vp$ biholomorphically onto $W$.

\subsection{$\kappa$-Misiurewicz maps are real}
    Given $\kappa>0$, we choose $M\geq 1$ such that, for $c=-2$, $U_M \subset B(0, \kappa/2)$. For $c$ close enough to $-2$,  $U_M(c) \subset B(0, \kappa)$. 
    Any $\kappa$-Misiurewicz parameter $c$ must lie in $K_M(c)$. 

    Take $N\geq 1$ large enough that $|T_\infty(h(c,z))| < 2^N$ for all $z \in K_M(-2)$ and $c$ close to $-2$.
    Note that 
    $$T_\infty(h(c,z)) = T_n(h(c,z)) + \frac{1}{Df^n_c(h(c,z))} T_\infty(f^n_c(h(c,z))).$$
    Consider a neighbourhood $W = B(\pm 2, \delta)$ of $\pm2$ on which, for all $c$ near $-2$, the first entry time of $z \in W$ to $B(0,1.1)$ is at least $20N$. 
    For $h(c,z) \in W$, let $n$ be the first entry time.  The second term on the right-hand side is bounded by $2^{-10N}2^{N} \leq 2^{-9} < 1/100$.  Apply Lemma~\ref{lem:thirdquarter} to bound $T_n(h(c,z))$. Then, 
    \begin{equation}\label{eqn:imspeed}
    |\partial_1 h(c,z)| = |T_\infty(h(c,z))| < 1/2.
    \end{equation}
    Hence, if $z \in K_M(-2) \cap B(-2, \delta/2)$ say, then $h(c,z) \in W$ for all $c$ near $-2$, while $h(-2+x,z) \in \R$ if $x\in \R$. By the speed estimate~\eqref{eqn:imspeed}, 
    \begin{equation}\label{eqn:IMspeed2}
    |\Im(h(c,z))| \leq |\Im(c)|/2.
    \end{equation}
    Let $c = -2+x+iy$, $x,y \in \R$ small, $y \ne0$. Then
    $K_M(-2+x) \subset \R$. 
    By~\eqref{eqn:IMspeed2},
    $$B(-2, \delta/2) \cap K_M(c) \subset \overline{B(\R, |y|/2)}.$$ 
    As $|y|>|y|/2$, $-2+x+iy \notin K_M(c)$, 
    so $c$ is not a $\kappa$-Misiurewicz parameter. 
    This proves Proposition~\ref{thmMisReal}.
    \subsection{Exponential tails}
    
\begin{defi}
We say that a map $g :  X \to \C $ has \emph{exponential tails} if $X$ is bounded and there are $C, \kappa >0$ such that, for $s \in \R$,
$$
\sum \diam(U) < C e^{-s\kappa},$$ 
where the sum is over connected components $U$ of the domain $X$ with diameter $\diam(U) < e^{-s}$.
\end{defi}
It is equivalent, in the definition, to consider the same sum over components with $e^{-s-T} \leq \diam(U) < e^{-s}$, which can be useful in proofs, with $T>0$ fixed.

A family of maps $(g_\eps)_{\eps \in A}$ has \emph{uniform exponential tails} if all $g_\eps$ have exponential tails, with constants $C,\kappa$ independent of $\eps \in A$.

    \subsection{First entry maps $F_M$ to $U_M(c)$ have uniform exponential tails}
        By Lemma~\ref{lem:expandingreturns}, the dynamics of $f_c$ is expanding outside $U_M(c)$. 
        Denote by $F_M$ the first entry map under $f_c$ to $U_M(c)$ (for legibility, we omit from notation the dependence of $F_M$ on $c$). As $U_M(c)$ is a nice set, preimages of $U_M(c)$ are nested or disjoint, and the first entry time is locally constant on connected components of the domain of $F_M$. 

        \begin{lem} \label{lemFmTails}
            There exists $\eps_1>0$ such that
            the family of maps 
            $$\{F_M\}_{c \in B(-2,\eps_1)}$$
            has uniform exponential tails.  
        \end{lem}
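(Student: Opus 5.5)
The plan is to control the diameters of the branch domains of $F_M$ by their entry times, and then count branches with a given entry time. $M$ is fixed throughout.

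Let $W$ be a component of the domain of $F_M$ with entry time $n$, so $f_c^n: W\to U_M(c)$. Since $U_M(c)$ is nice and $W$ is a first-entry component, the pullback of $U(c)\supset U_M(c)$ along the same orbit is univalent. Indeed, the orbit $W, f_c(W),\dots, f_c^{n-1}(W)$ avoids $U_M(c)$, hence avoids the critical point. The pullback of $U(c)$, call it $\hat W$, therefore gives a biholomorphism $f_c^n:\hat W\to U(c)$, after checking that the intermediate pullbacks avoid $0$. That check uses the nested-or-disjoint property of preimages of nice sets (Lemma~\ref{lemUareNice}) together with $f_c^j(\partial U_M)\cap U=\emptyset$.

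By Lemma~\ref{lem:UMext} and the Koebe distortion theorem, $f_c^n|_W$ has bounded distortion with constants independent of $c$. Hence $\diam(W)\sim |Df_c^n(z)|^{-1}$ for any $z\in W$. Since $f_c^j(z)$ stays outside $U_M(c)\supset B(0,|\eps|^{2/3})$ for small $\eps$, Lemma~\ref{lem:expandingreturns} applies to every return to $B(0,1.1)$. The stretches between returns are uniformly expanding, as in the proof of Lemma~\ref{lemSplitreturns}, with a bounded derivative loss near the last return before entering $U_M$. This loss is bounded because $|f_c^j(z)|\gtrsim_M 1$ outside $U_M$. The conclusion is
$$\diam(W)\lesssim_M 2^{-n/2}.$$

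It remains to bound the total diameter of components with entry time $n$. The number of such components is at most $2^n$, since each is a component of $f_c^{-n}(U_M)$. That count alone is too weak against $2^{-n/2}$, so instead I will sum areas. Koebe gives $\diam(W)^2\sim \mathrm{area}(W)$, and the components are disjoint subsets of $\cE_a$. Hence
$$\sum_{\diam W<e^{-s}}\diam(W)\le e^{-s/2}\sum \diam(W)^{1/2}\cdots,$$
but this still needs a summable bound. The cleanest route is a H\"older split:
$$\sum_{n\ge N}\sum_{W:\,\mathrm{time}\ n}\diam W\le \sum_{n\ge N}\bigl(\#_n\bigr)^{1/2}\Bigl(\sum\diam(W)^2\Bigr)^{1/2}\lesssim \sum_{n\ge N}2^{n/2}\bigl(\mathrm{area}_n\bigr)^{1/2}.$$
Here $\mathrm{area}_n$ is the area of the set of points with entry time $n$. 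Points with entry time $\geq n$ lie within the $\lesssim 2^{-n/2}$-neighbourhood of the non-escaping repeller $K_M(c)$. A better route for the area is the following: the set of points with entry time $\ge n$ is covered by the $\le 2^n$ pullbacks of $U(c)$ of depth $n$ along non-entering orbits, each of area $\lesssim |Df^n|^{-2}$. Uniform expansion gives $|Df_c^n|\ge \lambda^n$ with $\lambda>\sqrt2$ after grouping, since for $c=-2$ the repeller $K_M(-2)$ has dimension $<1$. By Lemma~\ref{lemKMspeed} this bound persists for $c$ near $-2$. So I will use the pressure $P_{K_M}(t)$ for some $t<1$, which is negative at $t=1$ uniformly in $c$, to get $\sum_{\mathrm{time}\,n}\diam W\lesssim e^{-\gamma n}$. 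Combined with $\diam W\gtrsim_M C^{-n}$ (bounded derivative), the condition $\diam W<e^{-s}$ forces $n\gtrsim s$, which yields exponential tails.

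The main obstacle is this last uniform pressure estimate: showing $\sum_{|\alpha|=n}|D f_c^n|^{-1}$ over $n$-cylinders of $K_M(c)$ decays exponentially, uniformly in $c\in B(-2,\eps_1)$. For $c=-2$ it holds because $K_M(-2)$ is a Cantor repeller of dimension $<1$, so its pressure at $t=1$ is negative. For nearby $c$ I would transfer it with the conjugacy $h$ and Lemma~\ref{lemKMspeed}, which bounds the change of $\log|Df^n|$ by $Cn|\eps|$. This is absorbed for $\eps_1$ small, and bounded distortion handles the passage from points to cylinders.
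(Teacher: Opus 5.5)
Your final route is correct and is essentially the paper's proof. It has four steps:
\begin{itemize}
\item extend each first-entry branch univalently over $U(c)$, using niceness and $f_c^j(\partial U_M)\cap U=\emptyset$;
\item use the Koebe space from Lemma~\ref{lem:UMext} to get $\diam W\sim|Df_c^n|^{-1}$ uniformly in $c$;
\item transfer the entry-time sums from $c=-2$ through the holomorphic motion of $K_M$ and Lemma~\ref{lemKMspeed};
\item use the crude lower bound $\diam W\geq C^{-n}$ to turn decay in entry time into diameter tails.
\end{itemize}
The one real difference is at $c=-2$: you quote $P_{K_M(-2)}(1)<0$ from $\HD(K_M(-2))<1$, whereas the paper proves this equivalent fact directly, and that direct argument is also what justifies your dimension claim. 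The paper bounds the real traces of small branches inside the set $E_N$ of points of $[-2,2]$ not entering $U_M$ in $N$ steps, and shows that $\Leb(E_N)$ decays exponentially. The H\"older/area detour is unnecessary and can be dropped, including the inference of $\lambda>\sqrt2$ from $\HD(K_M(-2))<1$, which does not follow.
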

        \begin{proof}
        For each $w \in K_M(c)$ and $n_w\geq 0$ with $f^{n_w}(w) = q_M$, there is a unique branch domain $W_w = W_w(c)$ of  $F_M$ with $w \in \partial W_w$, and vice versa. By bounded distortion (for example, due to expansion) $\diam(W_w) ~\sim~ |Df^{n_W}_c(w)|^{-1}$. 

        Let $\delta >0$.
        By Lemma~\ref{lemKMspeed}, there exist $C>1$ and $\eps_1>0$ with $\eps_1C < \delta$ such that
        $$
        \frac1{n_W} \log(\diam(W_w(c)))$$ 
        varies by at most $\delta$ on $B(-2,\eps_1)$. Consequently, 
        it suffices to show that $F_M$ has exponential tails when $c=-2$, 
        
        Let us sketch the (well-known) case $c=-2$. 
        If $W$ is a branch domain of $F_M$, then $\diam(W)$ is comparable with $\diam(W\cap \R)$. If $\diam(W\cap \R) < 4^{-N}\diam(U_M(-2))$, then the entry time on $W$ is at least $N+1$. Thus $$W \cap \R \subset E_N,$$ 
        where $E_N\subset [-2,2]$ is the set of real points in $[-2,2]$ that have not entered $U_M$ in the first $N$ iterates. 
        Hence, the sum of $\diam(W)$ over such branch domains is at most comparable with $\Leb(E_N)$. 
        By~\eqref{eqnqmorbit} and induction, a connected component $V$ of $E_N$ gets mapped by $f_{-2}^N$ to  an interval containing one of the three intervals $[f_{-2}(q_M), q]$, $\pm[q_M, -q]$. 
         Some definite proportion of such an interval gets mapped into $U_M$ in at most $2$ iterates. Therefore, some definite proportion of $E_N$ gets mapped into $U_M$ in at most $N+2$ iterates. 
         Thus $\Leb(E_N)$ is exponentially small in $N$, as required. 
        \end{proof}

    \section{Cantor repellers}\label{sec:CanRep}
    \begin{defi}\label{defi:rep}
Suppose that $D_1,\dots, D_n$ is a collection of open and non-degenerate topological discs with pairwise disjoint closures each compactly contained in a topological disc $D \subset \C$.
A map $\vp:\bigcup_{i=1}^n D_i\to D$ which is biholomorphic onto $D$ on every $D_i$, $1\leq i\leq n$, is called   a 
{\em Cantor repeller}.
        If we allow $n=\infty$, we call $\vp$ a \emph{generalised Cantor repeller}.
\end{defi}
If $\vp$ preserves the real line
and each branch domain $D_i$ is symmetric with respect to $\R$ then $\vp$ is 
a {\em real} Cantor repeller. 
With respect to a map $f$, if there are integers $n_i$ such that $\vp_{|D_i} = f^{n_i}$, we say that $\vp$ is \emph{induced} (by $f$).

If each branch of the inverse of $\vp$ extends univalently to be defined on a disc $\hat D \supset D$ and has range in $\hat D$ then we say that $\vp$ is \emph{extensible over $\hat D$. }
The distortion of each branch of each iterate of $\vp$ can then be bounded in terms of the modulus of the annulus $\hat D \setminus D$ via the Koebe Distortion Theorem. 

Indeed, let $\Delta_r$ denote the distortion constant 
$$
\Delta_r := \sup_g
\sup_{z,z' \in B(0,r)} \left| \frac{Dg(z)}{Dg(z')}\right|,
$$
taking the supremum
over all univalent maps $g$ defined on the unit disc. If $r<1$, $\Delta_r$ is finite. As $r$ tends to $0$, $\Delta_r$ tends to $1$, see \cite[Theorem~1.6]{CarGam}. 

 

\begin{lem}\label{lemQa}
    Let $\gamma \geq0$.
    Let $0<r< \kappa$ and let $\vp : \calD \to D$ be a generalised real Cantor repeller with domain $\calD$ and range $D \subset B(0,r)$, extensible over $B(0,\kappa)$.
   If $2r- \Leb\left(\calD \cap \R\right) \leq \gamma$ then, for all $j \geq 1$,
    \begin{eqnarray*}
        \left(1-\Delta_{r/\kappa} \frac{\gamma}{2r}\right)^j\Delta^{-1}_{r/\kappa} & \leq&
    \inf_{z\in D} \sum_{y \in \vp^{-j}(z)} |D\vp^j(y)|^{-1} \\
        & \leq&
     \sup_{z\in D} \sum_{y \in \vp^{-j}(z)} |D\vp^j(y)|^{-1} \leq \Delta_{r/\kappa}.
    \end{eqnarray*}
\end{lem}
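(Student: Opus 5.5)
The plan is to work on the real line and compare Lebesgue measure of real images of inverse branches with derivatives via Koebe distortion. Write $I := D\cap\R$, an interval (as $D$ is a real-symmetric topological disc) of length $L := \Leb(I) \leq 2r$, and write $\Delta := \Delta_{r/\kappa}$. Every inverse branch $g$ of $\vp^j$ extends univalently to $B(0,\kappa)$. So after rescaling $B(0,\kappa)$ to the unit disc, which sends $B(0,r)$ to $B(0,r/\kappa)$, the definition of $\Delta_r$ gives
$$|Dg(z)| \leq \Delta\, |Dg(z')| \quad\text{for all } z,z' \in D \subset B(0,r).$$
Since $\vp$ is real, each such $g$ restricts to a diffeomorphism of $I$ onto an interval $g(I)\subset I$. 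The images $g(I)$ over distinct branches $g$ of $\vp^j$ are pairwise disjoint, because the branch domains of $\vp^j$ are disjoint.

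\textbf{Upper bound.} Fix $z\in D$. For each branch $g$ of $\vp^{-j}$ the distortion bound gives
$$|Dg(z)|\, L \leq \Delta \int_I |Dg| = \Delta\,\Leb(g(I)).$$
Summing over $g$ and using disjointness of the $g(I)$ inside $I$ gives
$$\sum_{y\in\vp^{-j}(z)} |D\vp^j(y)|^{-1} = \sum_g |Dg(z)| \leq \Delta\, L/L = \Delta.$$

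\textbf{Lower bound.} Let $L_j := \sum_g \Leb(g(I))$ over branches $g$ of $\vp^{-j}$; this is the Lebesgue measure of the real part of the domain of $\vp^j$, with $L_0 = L$. The same distortion argument in reverse gives
$$\sum_g |Dg(z)| \geq \Delta^{-1} L_j/L.$$
It therefore suffices to show, by induction on $j$, that
$$L_{j+1} \geq \left(1-\Delta\frac{\gamma}{2r}\right) L_j .$$
Let $G := I\setminus \calD$ be the real gap. The branches of $\vp^{-(j+1)}$ are $g\circ h$, with $g$ a branch of $\vp^{-j}$ and $h$ a branch of $\vp^{-1}$, so
$$L_{j+1} = \sum_g \bigl(\Leb(g(I)) - \Leb(g(G))\bigr).$$
By the distortion bound,
$$\frac{\Leb(g(G))}{\Leb(g(I))} \leq \Delta\,\frac{\Leb(G)}{L}.$$
Moreover, $\calD\subset D$ forces $2r-\gamma \leq \Leb(\calD\cap\R)\leq L\leq 2r$, hence
$$\frac{\Leb(G)}{L} = 1-\frac{\Leb(\calD\cap\R)}{L} \leq 1-\frac{2r-\gamma}{L} \leq 1-\frac{2r-\gamma}{2r} = \frac{\gamma}{2r},$$
the last step because the middle expression is increasing in $L$. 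This gives the inductive step, and the lower bound follows.

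The main point needing care is this last comparison: one must normalise by $L$ rather than $2r$ and check that replacing $L$ by $2r$ goes in the right direction. Besides that, one should confirm that the real restrictions of the branches are injective onto disjoint subintervals of $I$, which follows from realness and the disjointness of the closures of the branch domains. If $\Delta\gamma/(2r)\geq 1$ the lower bound is trivial.
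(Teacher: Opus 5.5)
The paper states this lemma without proof, so there is nothing to compare against. Your argument is the natural one and is correct wherever the statement is true. You rescale Koebe distortion from $B(0,\kappa)$ to the unit disc and transfer it to the lengths of the disjoint real traces $g(I)\subset I$. This gives the upper bound and the comparison $\sum_g |Dg(z)|\ge \Delta_{r/\kappa}^{-1}L_j/L$. The one-step gap estimate $L_{j+1}\ge \bigl(1-\Delta_{r/\kappa}\gamma/(2r)\bigr)L_j$ then iterates along the tree of inverse branches. Each of these steps checks out.

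Your closing remark is wrong. Set $a=1-\Delta_{r/\kappa}\gamma/(2r)$. Passing from $L_{j+1}\ge aL_j$ to $L_j\ge a^jL$ requires $a\ge 0$. When $a<0$, the lower bound is vacuous only for odd $j$. For even $j$ the left-hand side is positive, and the inequality is then not trivial but false in general.

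Here is a counterexample. Take $\kappa=2r$, $D=B(0,r)$, and a single branch $\vp(z)=10^6z$ on $B(0,10^{-6}r)$, with $\gamma=3r$. All hypotheses hold. Writing $\Delta=\Delta_{1/2}\ge 1$, for $j=2$ the left-hand side is $(1.5\Delta-1)^2/\Delta\ge 1/4$. Every sum $\sum_{y\in\vp^{-2}(z)}|D\vp^2(y)|^{-1}$ equals $10^{-12}$.

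So the lemma must be read under the implicit hypothesis $\Delta_{r/\kappa}\gamma\le 2r$, or with $\max\{a,0\}^j$ on the left. You should state that restriction explicitly rather than claim the remaining case is trivial. The restriction is harmless for the paper's application to Misiurewicz parameters, where the real trace of the domain essentially fills $D\cap\R$ and $\gamma$ is negligible. It also gives $\gamma\le 2r$, which your step that $1-(2r-\gamma)/L$ is increasing in $L$ silently uses.
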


\begin{lem} \label{lemTailsComp}
    Consider two families of maps $F_\eps : X_\eps \to \C$, $\vp_\eps : Y_\eps \to \C$ with uniform exponential tails, whose domains $X_\eps, Y_\eps$ are subsets of a bounded region $U \subset \C$. 
    Suppose that
             each connected component  of the domain of $\vp_\eps$ is mapped biholomorphically by $\vp_\eps$ with uniformly bounded distortion to the same open topological disc $V_\eps$, $V_\eps \supset X_\eps$; 
    Then
            $F_\eps \circ \vp_\eps$ has uniform exponential tails. 
\end{lem}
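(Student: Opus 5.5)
The plan is to estimate the tail sum of $F_\eps\circ\vp_\eps$ by splitting each branch domain into its $\vp_\eps$-part and its $F_\eps$-part, and to treat the two families of tails separately.

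\textbf{Structure of the branches.} A connected component of the domain of $F_\eps\circ\vp_\eps$ has the form $W=(\vp_\eps|_V)^{-1}(X)$. Here $V$ is a component of $Y_\eps$ and $X$ is a component of $X_\eps$. This uses $X_\eps\subset V_\eps$ and that $\vp_\eps|_V:V\to V_\eps$ is biholomorphic.

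\textbf{Diameter comparison.} Since the distortion is uniformly bounded by some $K$,
$$\diam(W)\le K\,\frac{\diam(V)}{\diam(V_\eps)}\,\diam(X).$$
I expect this to be the main obstacle. Bounded distortion controls $|D\vp_\eps^{-1}|$ up to the factor $K$. Turning that into a diameter bound with constants uniform in $\eps$ needs two inputs:
\begin{itemize}
\item $\diam(V_\eps)\gtrsim 1$ uniformly in $\eps$, which follows for instance from $V_\eps\supset X_\eps$ together with a definite lower bound on the size of the domains;
\item $\diam(V)\sim|D\vp_\eps|^{-1}\diam(V_\eps)$, obtained by integrating $D(\vp_\eps|_V)^{-1}$ along paths in $V_\eps$.
\end{itemize}
If $V_\eps$ is not uniformly quasi-convex, the comparison is made through inner and outer radii instead. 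Combined with the uniform bound $\diam(V_\eps)\le\diam U$, this gives $\diam(W)\le K'\diam(V)\diam(X)$ with $K'$ independent of $\eps$.

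\textbf{The tail sum.} Fix $s$ and consider the components with $\diam(W)<e^{-s}$. Split them according to whether $\diam(V)<e^{-s/2}$ or $\diam(V)\ge e^{-s/2}$.

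In the first case, sum over $X$ first. Since $\sum_X\diam(X)\le C$ (the tail bound at $s=-\infty$, noting the domain is bounded), the contribution is at most
$$K'C\sum_{\diam(V)<e^{-s/2}}\diam(V)\le K'C^2e^{-s\kappa/2}.$$

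In the second case, $\diam(X)<e^{-s/2}/K'$. The tails of $F_\eps$ then bound $\sum_X\diam(X)\le C'e^{-s\kappa/2}$. Multiplying by $K'\sum_V\diam(V)\le K'C$ gives the same form of bound.

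Adding the two cases, $F_\eps\circ\vp_\eps$ has exponential tails with exponent $\kappa/2$ (taking the smaller of the two exponents if they differ). All constants are independent of $\eps$, so the tails are uniform.
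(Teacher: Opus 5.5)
\textbf{The gap.} Your first case is fine, but the second case does not follow from what you proved. When $\diam(V)\ge e^{-s/2}$ you deduce $\diam(X)<e^{-s/2}/K'$ from $\diam(W)<e^{-s}$. The only comparison you established, however, is the upper bound $\diam(W)\le K'\diam(V)\diam(X)$. Combined with $\diam(W)<e^{-s}$, this gives no upper bound on $\diam(X)$; it rearranges to a \emph{lower} bound $\diam(X)\ge\diam(W)/(K'\diam(V))$. As far as a one-sided inequality is concerned, a large $V$ and a large $X$ could produce a tiny $W$, and the second case would then be uncontrolled.

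What you need is the reverse inequality $\diam(W)\ge (K')^{-1}\diam(V)\diam(X)$, with $\diam(V_\eps)$ normalised to be of order $1$. It gives $\diam(X)\le K'e^{-s}/\diam(V)\le K'e^{-s/2}$, and then your estimate goes through with $e^{-s/2}$ replaced by $K'e^{-s/2}$. This lower bound needs the same kind of geometric input as your upper bound, not just near-constancy of $|D\vp_\eps|$. For example, you can use Koebe's one-quarter theorem for the inverse branch extended univalently over a definite neighbourhood of $\overline{V_\eps}$; this is available in the applications by extensibility. The paper's sketch builds the two-sided comparison in by assuming affine branches, so that $\diam(D\cap\vp^{-1}(D'))=\diam(D)\diam(D')$ holds exactly.

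A smaller slip: the total-mass bound $\sum_X\diam(X)\le C'$ comes from the tail inequality at some $s_0$ with $e^{-s_0}>\diam(U)$, not at $s=-\infty$, where the right-hand side $Ce^{-s\kappa}$ is vacuous.

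\textbf{Comparison with the paper.} Once repaired, your argument differs from the paper's only in the bookkeeping over scales. The paper discretises diameters as $e^{-n}$ and sums over all splittings $e^{-n}=e^{-j}e^{-(n-j)}$. This gives $\sum_{j=0}^nCe^{-j\kappa}Ce^{-(n-j)\kappa}=C^2(n+1)e^{-n\kappa}$, hence any exponent $\kappa'<\kappa$. Your single split at $e^{-s/2}$ only yields exponent $\kappa/2$, but it avoids the discretisation and works directly with inequalities rather than the idealised affine picture. That is arguably closer to the ``general case'' the paper leaves to the reader, and either exponent suffices.

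Your remark that a uniform lower bound on $\diam(V_\eps)$ is needed is correct; without it the statement can fail when $\kappa<1$. It corresponds to the paper's normalisation $\diam(V_\eps)=1$, and in the application $V_\eps=U(c)$.
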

In applications, $\vp_\eps$ will be generalised Cantor repellers and $F_\eps$ the first entry to a subdomain. 
\begin{proof}
    We shall sketch the proof supposing that the maps are affine on each connected component of their domains, and that the diameters of all concerned sets are in $\{e^{-n} : n \in \Z\}$, that $\diam(V_\eps)=1$,  and we omit the dependence of maps on $\eps$. Suppose that the uniform tail constants for both families are $C, \kappa$. 
    
    There is a bijection between connected components of $F \circ \vp$ 
    and pairs $(D, D')$,
    where 
    $D, D'$ are connected components of the domains of $\vp, F$ respectively and 
$D \cap \vp^{-1}(D')$ is the corresponding component of the domain of $F\circ \vp$. Moreover,
    $$
    \diam(D \cap \vp^{-1}(D')) = \diam(D)\cdot\diam(D'). 
    $$
    Then 
    $$ 
    \sum \diam(W) \leq \sum_{j=0}^n Ce^{-j\kappa} Ce^{(-n+j)\kappa} = C^2 (n+1)e^{-n\kappa} \leq C' e^{-n\kappa'},$$
    for some $\kappa'>0$,
    where the first sum is over components $W$ of the domain of the domain of $F \circ \vp$ of diameter $e^{-n}$.

    The general case is merely a little more cumbersome to write, using ranges and inequalities. 
\end{proof}

\begin{defi} \label{def:repcon}
Let $0 \in \cE \subset \C$ and let 
    $$\{\vp_\eps\}_{\eps \in \cE} : \bigcup_{j\geq1} D_{j,\eps} \to D_\eps \subset \C$$  be a family of Cantor repellers,  extensible over an open topological disc containing $\overline{D_0}$. 
    We say that $\vp_\eps$ \emph{converges} to $\vp_0$ as $\eps \to 0$, or that $\{\vp_\eps\}_\eps$ is a \emph{convergent family}, if the following statements hold: 
     $D_\eps \to D_0$;
     $$\lim_{j_0\to\infty} \sup_{\eps\in \cE,\, j\geq j_0} |D_{j,\eps}| = 0;$$
      for each $j$,
    $D_{j,\eps} \to D_{j,0}$
    and $\vp_\eps \to \vp_0$ uniformly on $D_{j,\eps} \cap D_{j,0}$ as $\eps \to 0$ in $\cE$. 
\end{defi}

Anticipating \S\ref{secTDF}, truncating to a finite number of branches leads to convergence of pressure functions. 
\begin{lem} \label{lemTrunc}
    Let $\{\vp_\eps\}_\eps$ be a convergent family of Cantor repellers, as above, with range $D_\eps \ni 0$.  If each $\vp_\eps$ has $N$ branch domains, then given $\delta>0$, there exists $\eps_0>0$ such that, 
    for all $|\eps|<\eps_0$,  $n\geq 1$ and $t$ in a fixed neighbourhood of $1$,
    $$
  \left| \frac1n \log \sum_{z \in \vp_\eps^{-n}(0)} |D\vp_\eps^n(z)|^{-t}
  - 
  \frac1n \log \sum_{z \in \vp_0^{-n}(0)} |D\vp_0^n(z)|^{-t} \right| < \delta.$$
\end{lem}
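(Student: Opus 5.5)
The plan is to compare the two sums preimage by preimage through a symbolic coding. Since both $\vp_\eps$ and $\vp_0$ have exactly $N$ branches, each $z\in\vp_\eps^{-n}(0)$ corresponds to a unique word $w=(j_1,\dots,j_n)\in\{1,\dots,N\}^n$. It is $z_w(\eps)=\psi_{j_1,\eps}\circ\dots\circ\psi_{j_n,\eps}(0)$, where $\psi_{j,\eps}:D_\eps\to D_{j,\eps}$ denotes the inverse branch. Both sums are therefore indexed by the same set of $N^n$ words. It will then suffice to show that for every word
$$\left|\log|D\vp_\eps^n(z_w(\eps))|-\log|D\vp_0^n(z_w(0))|\right|\le n\delta',$$
with $\delta'$ independent of $n$ and $w$. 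Raising to the power $-t$ with $t$ in a bounded neighbourhood of $1$ (say $t\le 2$) and summing would give a ratio of sums in $[e^{-2n\delta'},e^{2n\delta'}]$. Taking $\frac1n\log$ and choosing $\delta'=\delta/3$ would then conclude.

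By the chain rule, $\log|D\vp^n(z_w)|=\sum_{k=1}^{n}\log|D\vp(\vp^{k-1}z_w)|$. Here $\vp^{k-1}z_w=z_{\sigma^{k-1}w}$ lies in the branch domain $D_{j_k}$ and is the image of the point $\vp^{k}z_w$ under $\psi_{j_k}$. Equivalently, $\log|D\vp^n(z_w)|=-\sum_{k}\log|D\psi_{j_k}(y_k)|$ with $y_k=\vp^k(z_w)\in D$. It therefore suffices to prove two things.

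First, uniform convergence of the inverse branches and of their log-derivatives. By extensibility, every $\psi_{j,\eps}$ extends univalently to a fixed disc $\hat D$ with $\overline{D_0}\subset\hat D$. Because $D_\eps\to D_0$, for small $\eps$ all $D_\eps$ lie in a fixed compact $K\subset\hat D$. Uniform convergence $\vp_\eps\to\vp_0$ on $D_{j,\eps}\cap D_{j,0}$ gives local uniform convergence $\psi_{j,\eps}\to\psi_{j,0}$. Normality of univalent maps on $\hat D$ with values in a bounded set then upgrades this to uniform convergence on $K$, and Cauchy estimates give convergence of the derivatives. The Koebe distortion theorem bounds $|D\psi_{j,\eps}|$ away from $0$ and $\infty$ on $K$. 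Hence $\log|D\psi_{j,\eps}|\to\log|D\psi_{j,0}|$ uniformly on $K$. Since $j$ ranges over only finitely many values, this holds uniformly in $j$, and $\log|D\psi_{j,0}|$ is uniformly Lipschitz on $K$ with some constant $L$.

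Second, the positions $y_k(\eps)$ and $y_k(0)$ must be close uniformly in $n$ and $k$; this is the main obstacle, since errors could accumulate along the orbit. The remedy is contraction. By Koebe, the compositions $\psi_{j_{k+1}}\circ\dots\circ\psi_{j_n}$ contract by a factor $\lambda^{n-k}$ with $\lambda<1$ uniform. This holds because each $\psi_j$ maps $K$ into $D_j$, which is compactly inside $D$, so each is a strict contraction of the hyperbolic metric of $\hat D$, uniformly for finitely many branches and small $\eps$. Telescoping then gives
$$|y_k(\eps)-y_k(0)|\le\sum_{m\ge0}C\lambda^m\sup_{j}\sup_K|\psi_{j,\eps}-\psi_{j,0}|\le \frac{C}{1-\lambda}\eta(\eps),$$
where $\eta(\eps)\to0$ measures the branch convergence from the first step. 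Combining the two steps, each summand differs by at most $\sup|\log|D\psi_{j,\eps}|-\log|D\psi_{j,0}||+L\frac{C}{1-\lambda}\eta(\eps)$, which is below $\delta'$ for $|\eps|<\eps_0$. This yields the required $n\delta'$ bound and proves the lemma.
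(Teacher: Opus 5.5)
Your proposal is correct and follows the same route as the paper. The paper's proof is a two-line sketch: preimages of $0$ move continuously, uniformly in $n$, because the maps are expanding, and the derivative ratio at corresponding points is close to $1$ at each step. Your symbolic coding, per-step comparison of $\log|D\psi_{j,\eps}|$, and contraction-controlled telescoping of the positions $y_k(\eps)$ supply exactly those details. The only small imprecision is that $\psi_j(K)$ is a neighbourhood of $D_j$ rather than a subset of it. This is harmless if you take $K$ to be a thin closed neighbourhood of $\overline{D_0}$, so that $\psi_{j,\eps}(K)\subset K$ for all $j$ and small $\eps$.
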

\begin{proof}
    The maps are piecewise expanding, so the preimages of 0 move continuously (at $\eps=0$), uniformly in $n$, and the ratio of derivatives at corresponding points is close to 1 for $\eps$ small. 
\end{proof}

%

\section{Binary tree decomposition} \label{sectBinary}
For letters $a,b$, we can consider the set of words of length $n$, $\Sigma_n := \{a,b\}^n$. Overloading the notation, we can view $a,b$ as representing numbers. Then 
$$(a+b)^n = \sum_{\omega \in \Sigma_n} \pi(\omega),$$
where $\pi(\omega)$ is the product of the letters in $\omega$. Each word $\omega$ represents a path in a binary tree. 

Given a generalised Cantor repeller $\vp: \calD \to D \ni 0$, our goal is to estimate the pressure
\begin{equation}\label{eqn:pressdef}
P(t) = \lim_{n\to \infty} \frac1n \log \sum_{z \in \vp^{-n}(0)} |D\vp^n(z)|^{-t}
\end{equation}
for $\vp$, when $t\approx 1$.
Noting that the Chain Rule renders the derivative multiplicative and multiplication is distributive over addition, we shall decompose the sum following the binary tree. 

Given a partition of the domain $\calD$ into two unions of branch domains, $D_a, D_b$, 
 let $\vp_a$ denote the restriction of $\vp$ to $D_a$ and $\vp_b$ the restriction to $D_b$.

 For $\kappa =a,b$, set  $$\overline{Q}_\kappa(j,t) := 
 \sup_{z_0\in D}
 \sum_{z \in \vp_\kappa^{-j}(z_0)} |D\vp^j(z)|^{-t}.$$
We define $\underline{Q}_\kappa(j,t)$ similarly, with the infimum in place of supremum.

\begin{prop}\label{propBTD}
    Suppose there are numbers $\alpha_i, \beta_i>0$, $i=1,2$, and $C\geq 1$,  with
    $$C^{-1} \alpha_1^j \leq \underline{Q}_a(j,t) \leq \overline{Q}_a(j,t) \leq C \alpha_2^j;$$ 
    $$
    \beta_1 \leq 
     \underline{Q}_b(1,t) \leq \overline{Q}_b(1,t) 
     \leq \beta_2,
     $$
     for all $j\geq 1$. 
     Then 
     $$
     \log(\alpha_1 + C^{-1}\beta_1) \leq P(t) \leq \log(\alpha_2 + C\beta_2).$$
\end{prop}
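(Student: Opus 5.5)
We decompose each $n$-fold preimage of $z_0$ under $\vp$ according to its itinerary, following the binary tree of \S\ref{sectBinary}. A point $z\in\vp^{-n}(z_0)$ has an itinerary in $\{a,b\}^n$, recording whether $\vp^k(z)$ lies in $D_a$ or $D_b$. Group consecutive letters into maximal blocks: each itinerary is uniquely a concatenation of blocks of the form $a^{j}$ (with $j\ge 0$) separated by single letters $b$. Concretely, write it as
$$a^{j_0}\, b\, a^{j_1}\, b \cdots b\, a^{j_m},$$
where $m$ is the number of $b$'s and $j_0+\dots+j_m+m=n$.

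By the Chain Rule, $|D\vp^n(z)|^{-t}$ factorises along the orbit into a product of factors, one per block. We sum over preimages from the last block backwards, i.e.\ from $z_0$ outward. At each stage we bound the inner sum by $\overline{Q}$ or $\underline{Q}$, taking the supremum or infimum over the current base point, which ranges over $D$. This gives
$$\sum_{z\in\vp^{-n}(z_0),\ \text{given block pattern}}|D\vp^n(z)|^{-t}\le \prod_{i=0}^m \overline{Q}_a(j_i,t)\cdot \overline{Q}_b(1,t)^m,$$
and the analogous lower bound with $\underline{Q}$. We use the convention $Q_a(0,t)=1$. For $j=0$ the hypothesis $C^{-1}\le 1\le C$ still holds because $C\ge1$.

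Insert the hypotheses. An $a$-block of length $j\ge1$ contributes at most $C\alpha_2^{j}$, and an empty $a$-block contributes $1\le C$. Each $b$ contributes at most $\beta_2$. The upper bound for a fixed pattern is therefore at most $C^{m+1}\alpha_2^{n-m}\beta_2^m$. Summing over patterns, the number of patterns with $m$ letters $b$ is $\binom nm$, since the positions of the $b$'s determine the pattern. Hence
$$\sum_{z\in\vp^{-n}(z_0)}|D\vp^n(z)|^{-t}\le C\sum_m\binom nm \alpha_2^{n-m}(C\beta_2)^m=C(\alpha_2+C\beta_2)^n.$$
Similarly the total sum is at least $C^{-1}(\alpha_1+C^{-1}\beta_1)^n$. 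Taking $\frac1n\log$ and letting $n\to\infty$ in \eqref{eqn:pressdef} with $z_0=0$ gives both inequalities; the constants $C^{\pm1}$ disappear in the limit.

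The only delicate point is that the per-block bounds must hold uniformly in the base point, so that they can be applied iteratively. This is ensured because $\overline{Q}$ and $\underline{Q}$ are defined as the supremum and infimum over $z_0\in D$, and every branch maps onto all of $D$, so each intermediate point $\vp^k(z)$ lies in $D$.
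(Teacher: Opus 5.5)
Your proof is correct and follows essentially the paper's binary-tree decomposition: you sum over itineraries in $\{a,b\}^n$, bound each block by $\underline{Q}$ or $\overline{Q}$ uniformly in the base point, and use the binomial theorem to get the bounds $C^{-1}(\alpha_1+C^{-1}\beta_1)^n$ and $C(\alpha_2+C\beta_2)^n$. The one difference is that you make each $b$ its own block, with possibly empty $a$-blocks in between, whereas the paper groups maximal $b$-runs; this makes explicit two steps the paper leaves implicit, namely $\beta_1^j\le\underline{Q}_b(j,t)\le\overline{Q}_b(j,t)\le\beta_2^j$ and the absorption of one factor $C^{\pm1}$ per letter $b$.
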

\begin{proof}
 If $\omega = a^{j_1}b^{j_2}\ldots a^{j_{2k-1}}b^{j_{2k}} \in \Sigma_n$, $j_i \geq 1$ except possibly $j_1, j_{2k}$ which could be $0$, 
    we can write $$S_\omega := \vp_b^{-j_{2k}}\circ \cdots \vp_a^{-j_1}(0).$$ Then there is a \emph{binary tree decomposition} of preimages, 
 $$
    \vp^{-n}(0) = \bigcup_{\omega \in \Sigma_n}S_\omega.$$
For one branch $\omega$ of this tree, and dropping the dependence on $t$ from notation, we estimate
$$
\underline{Q}_a(j_1)\underline{Q}_b(j_2)\ldots \underline{Q}_b(j_{2k})
\leq
\sum_{z\in S_\omega} |D\vp^n(z)|^{-t} 
\leq
\overline{Q}_a(j_1)\overline{Q}_b(j_2)\ldots \overline{Q}_b(j_{2k}).$$
The lefthand side we estimate from below by
$$
    C^{-1} \alpha_1^{j_1 + \cdots + j_{2k-1}} (C^{-1}\beta_1)^{j_2 + \cdots + j_{2k}}.
    $$ 
Summing over $\omega \in \Sigma_n$, we obtain the lower bound
$$
    C^{-1} (\alpha_1 + C^{-1}\beta_1)^n.$$
    This gives the lower bound for $P(t)$. 
    The upper bound is similar.
\end{proof}

\section{Thermodynamic formalism}\label{secTDF}
While one can frequently bound Hausdorff dimension from above directly, conformal measures and \emph{thermodynamic formalism} have proven useful to obtain lower bounds.
For background on the well-developed theory of thermodynamic formalism (over an infinite alphabet/number of branches), see \cite{SARIG_1999, MUbook, PUbook}. 

If a generalised Cantor repeller $\vp : \cD \to D$ has exponential tails, it admits a
 $(P(t),t)$-conformal probability measure  $m_{\vp,t}$ for $t$ in a neighbourhood of $1$. The measure, by definition, has Jacobian $e^{P(t)}|D\vp|^t$, where the pressure $P(t)$ at $t$ was defined in~\eqref{eqn:pressdef}.
 Given a family of Cantor repellers $\vp$ with uniform exponential tails, $P(t)$ is finite (one can check by hand) $m_{\vp,t}$ exists for $t$ in a fixed neighbourhood of $1$ (applying \cite[Theorem~4]{SARIG_1999}). If there is a $t_0$ with $P(t_0)=0$, then it is unique and the Hausdorff dimension of the 
 set
 $$ 
 \Lambda := \bigcap_{n\geq0} \vp^{-n}(D)$$
 is equal to $t_0$ \cite[Theorem~3.15]{MU_1996}. 

The measures $m_{\vp,t}\circ \vp^{-n}$ converge weakly to the equilibrium measure  $\mu_t$  for $\vp$, the unique invariant probability measure equivalent to $m_{\vp,t}$.  
Suppose that $\vp$ has range contained in $B(0,r)$ and that $\vp$ is extensible over the unit disc. 
    By the standard Folklore Theorem construction, 
    \begin{equation}\label{eqnFolk}
        \Delta_r^{-t} \leq \left|\frac{d\mu_{\vp,t}}{dm_{\vp,t}} \right| \leq \Delta_r^{t}.
    \end{equation}
    Knowing the Jacobian of $m_{\vp,t}$, if $W$ is a branch domain of $\vp$ and $z \in W$ then 
    $$
    \Delta_r^{-t}m_{\vp,t}(W) \leq |D\vp(z)|^{-t}e^{-P(t)} \leq \Delta_r^tm_{\vp,t}(W)$$
    and, for all $n\geq 1$,
    \begin{equation}\label{eqn:pressalpha}
    \Delta_r^{-t} \leq \sum_{z \in \vp^{-n}(0)} |D\vp^n(z)|^{-t}e^{-nP(t)} \leq \Delta_r^t.
    \end{equation}

    Let $0 \in \cE \subset \C$ and let 
    $$\{\vp_\eps\}_{\eps \in \cE} : \bigcup_{j\geq1} D_{j,\eps} \to D_\eps \ni 0$$  be a convergent (see Definition~\ref{def:repcon}) family of generalised Cantor repellers with uniform exponential tails,  extensible over some topological disc $\hat D$ which compactly contains $D_0$. 

    We denote by $P_\eps$ the pressure function of $\vp_\eps$. 

    \begin{lem} \label{lemPconverges}
        As $\eps \to 0$, $P_\eps(t) \to P_0(t)$ uniformly for $t$ in a neighbourhood of $1$. 
    \end{lem}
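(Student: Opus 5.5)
The plan is to reduce to the finite-branch case of Lemma~\ref{lemTrunc}, using uniform exponential tails to show that the discarded branches contribute uniformly little to the pressure. Fix a neighbourhood $I$ of $1$ small enough that $t\kappa' > \kappa'' >0$ is uniform there, in a sense made precise below. For $N\geq1$, let $\vp_\eps^{(N)}$ denote the restriction of $\vp_\eps$ to $D_{1,\eps}\cup\dots\cup D_{N,\eps}$. This is a convergent family of Cantor repellers with $N$ branches, extensible over the same $\hat D$. Its pressure is denoted $P^{(N)}_\eps$. Monotonicity gives $P^{(N)}_\eps(t)\leq P_\eps(t)$ immediately. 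By Lemma~\ref{lemTrunc}, for each fixed $N$ and $\delta>0$ we have $|P^{(N)}_\eps(t)-P^{(N)}_0(t)|<\delta$ for $|\eps|$ small, uniformly for $t\in I$; the Lemma's bound is uniform in $n$, so it passes to the limit defining the pressure. It therefore suffices to prove
$$\sup_{\eps\in\cE,\,t\in I}\big(P_\eps(t)-P^{(N)}_\eps(t)\big)\longrightarrow 0 \quad (N\to\infty),$$
and then combine this with the statement for $\eps=0$, which is the special case of the same display.

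The uniform tail estimate will come from the binary tree decomposition, Proposition~\ref{propBTD}, with $a$ the first $N$ branches and $b$ the rest. The Koebe constant $\Delta=\Delta_{r/\kappa}$ is uniform because of extensibility over the fixed disc $\hat D$ and the convergence $D_\eps\to D_0$. With it we compare sums over preimages to sums of diameters: $|D\vp_\eps(y)|^{-1}\sim \diam(D_{j,\eps})/\diam(D_\eps)$. Hence
$$\overline{Q}_b(1,t)\lesssim \sum_{j>N}\diam(D_{j,\eps})^t.$$
Convergence of the family gives $\sup_{\eps,j>N}\diam(D_{j,\eps})=:e^{-s_N}\to0$. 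Splitting into dyadic shells, uniform exponential tails with constants $C,\kappa$ give
$$\sum_{j>N}\diam(D_{j,\eps})^t\leq \sum_{k\geq0} e^{-(s_N+k)(t-1)}\,Ce^{-(s_N+k)\kappa}.$$
For $t>1-\kappa/2$ this is at most $C' e^{-s_N\kappa/2}=:\beta_2(N)\to0$, uniformly in $\eps$. For the $a$-part, equation~\eqref{eqn:pressalpha} applied to $\vp^{(N)}_\eps$ gives
$$\Delta^{-t}e^{jP^{(N)}_\eps(t)}\leq \underline{Q}_a(j,t)\leq\overline{Q}_a(j,t)\leq \Delta^{t}e^{jP^{(N)}_\eps(t)}.$$
There is a small subtlety: the sup and inf are over all $z_0\in D$ rather than $z_0=0$, and this is handled by the same distortion constant. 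Proposition~\ref{propBTD} then yields
$$P_\eps(t)\leq \log\big(e^{P^{(N)}_\eps(t)}+\Delta^t\beta_2(N)\big).$$
Since $P^{(N)}_\eps(t)\geq P^{(1)}_\eps(t)\geq -t\log\sup|D\vp_\eps|$ on one branch is bounded below uniformly, the right side is $P^{(N)}_\eps(t)+O(\beta_2(N))$, uniformly in $\eps$ and $t$. This proves the displayed convergence. Finally, given $\delta$, choose $N$ with the tail error below $\delta/3$, then $\eps_0$ via Lemma~\ref{lemTrunc} for that $N$, and apply the triangle inequality.

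The main obstacle is ensuring that all constants are uniform in $\eps$. This concerns in particular the single distortion constant and the comparison $|D\vp^j_\eps|^{-1}\sim\diam$ across the whole family. For these I would use extensibility over the common disc $\hat D\Supset D_0$ together with $D_\eps\to D_0$, so that for small $\eps$ the annulus $\hat D\setminus D_\eps$ has modulus bounded below. A secondary point is the uniform lower bound on $e^{P^{(N)}_\eps}$, needed so that the additive error $\beta_2$ translates into a small error in the logarithm. I would obtain this from a single fixed branch, whose derivative converges by the definition of convergent family.
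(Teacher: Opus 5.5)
Your proposal is correct and follows essentially the same route as the paper: truncate to finitely many branches, control the truncated pressures with Lemma~\ref{lemTrunc}, use uniform exponential tails to make the one-step sum $\overline{Q}_b(1,t)$ over the discarded branches uniformly small, and close the gap with Proposition~\ref{propBTD} and \eqref{eqn:pressalpha}. The differences are cosmetic: you truncate by index $N$, using the uniform diameter decay in Definition~\ref{def:repcon}, where the paper uses a diameter threshold $\rho$ on the branches of $\vp_0$; and you get $P^{(N)}_\eps\le P_\eps$ by monotonicity rather than from the lower bound in Proposition~\ref{propBTD}.
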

    \begin{proof}
        Given $\rho>0$, let $D_a$ be the union of branch domains of $\vp_0$ of diameter at least $\rho$. There are only finitely many, so we can define $D_a(\eps)$, the union of the  corresponding branch domains  of $\vp_\eps$. 
        Let $D_b = D_b(\eps)$ denote the union of the remaining domains. For $\eps$ small enough, the diameter of any branch domain contained in $D_b$ will have diameter bounded by $2\rho$. Let 
        $$
        \beta(\eps,t) = \sum_{z \in \vp_\eps^{-1}(0) \cap D_b} |D\vp_\eps(z)|^{-t}.$$
        From uniform exponential tails, 
        for all $\delta>0$ there is a $\rho>0$ for which 
        $$\beta(\eps,t) < \delta,$$ for all $t$ in a neighbourhood of $1$. 

        We define $P_{a,\eps}(t)$ to be the pressure at $t$ of $\vp_\eps$ restricted to $D_a$. 
        By Lemma~\ref{lemTrunc}, for small enough $\eps$, 
        $$
        |P_{a,\eps} - P_{a,0}| < \delta.$$
    
        Using~\eqref{eqn:pressalpha}, 
         the hypotheses of Proposition~\ref{propBTD} then hold for $t$ near $1$ with $\alpha_1 = \alpha_2 = e^{P_{a,\eps}(t)}$, and 
         $\beta_1 =\Delta^{-1} \beta(\eps,t)$,  
         $\beta_2 =\Delta \beta(\eps,t)$, where $\Delta$ is a distortion bound. 
         Applying the proposition gives that $|P_\eps(t) - P_{a,\eps}(t)|$ can be made arbitrarily small, if $\beta$ is. 
         The result follows. 
    \end{proof}

    We have shown convergence of pressure in a neighbourhood of $t=1$;  we need an estimate on the slope of $P_\eps(t)$.

    \begin{lem} \label{lemPslope}
        With $\vp_\eps$ as before, suppose that $D_\eps \subset B(0,r)$, $\Delta_r < 11/10$  and $\hat D = B(0,1)$, and suppose $|D\vp_\eps| >2$.

        Given $\delta >0$, there is a neighbourhood of $t=1$ on which 
        $$\left|\log \frac{P_\eps'(t)}{ P_0'(1)}\right| < \delta + 9\log \Delta_{r},$$
        for all small $\eps$. 
    \end{lem}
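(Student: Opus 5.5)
We prove the estimate by writing the derivative of the pressure as minus a Lyapunov exponent and then controlling how that exponent depends on $t$ and on $\eps$.

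\textbf{Derivative formula.} For each $\eps$ and $t$ near $1$, uniform exponential tails make $P_\eps$ real-analytic (or at least differentiable) near $t=1$. Its derivative is
$$P_\eps'(t) = -\chi_{\eps,t} := -\int \log|D\vp_\eps|\, d\mu_{\eps,t},$$
where $\mu_{\eps,t}$ is the equilibrium measure. The integral is finite because exponential tails give $\sum \diam(W)^{t}\log(1/\diam(W)) < \infty$ uniformly in $\eps$. The task therefore reduces to comparing $\chi_{\eps,t}$ with $\chi_{0,1}$ up to a multiplicative factor $e^{\delta}\Delta_r^{9}$.

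\textbf{Step 1: measures of branches.} Combine the conformal measure estimate preceding \eqref{eqn:pressalpha} with \eqref{eqnFolk}. For every branch domain $W$ and $z\in W$,
$$\mu_{\eps,t}(W) = |D\vp_\eps(z)|^{-t}e^{-P_\eps(t)}\cdot \Delta_r^{\pm 2t}.$$
Since $\Delta_r<11/10$, the distortion of $\log|D\vp_\eps|$ on $W$ is at most $\log\Delta_r$. Because $|D\vp_\eps|>2$ gives $\log|D\vp_\eps|>\log 2$, this additive error becomes a multiplicative one. Hence
$$\chi_{\eps,t} = \Delta_r^{\pm 3}\sum_W |D\vp_\eps(z_W)|^{-t}e^{-P_\eps(t)}\log|D\vp_\eps(z_W)|,$$
with $z_W$ a chosen point of $W$ (say the preimage of $0$).

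\textbf{Step 2: truncation.} Split the branches as in Lemma~\ref{lemPconverges} into finitely many large ones $D_a$ and the tail $D_b$. Uniform exponential tails give
$$\sum_{W\subset D_b}\diam(W)^{t}\log(1/\diam(W))<\delta',$$
uniformly in $\eps$ and in $t$ near $1$. The tail contribution to the sum above is thus small compared with the lower bound $\chi\ge \log 2\cdot\Delta_r^{-3}$. On the finitely many branches in $D_a$, Definition~\ref{def:repcon} gives $\vp_\eps\to\vp_0$ uniformly, so the points $z_W$ and the derivatives converge. Lemma~\ref{lemPconverges} gives $P_\eps(t)\to P_0(t)$ uniformly.

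\textbf{Step 3: dependence on $t$.} The weights $|D\vp|^{-t}$ vary continuously in $t$, uniformly on the finitely many branches of $D_a$, and the tail is again controlled uniformly. This continuity in $t$ is the reason the statement holds on a neighbourhood of $t=1$.

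\textbf{Step 4: comparing $\eps$ with $0$.} Apply Step 1 both at $(\eps,t)$ and at $(0,1)$. This yields
$$\chi_{\eps,t}/\chi_{0,1} = e^{\pm\delta}\Delta_r^{\pm 6}$$
plus the tail error, which is absorbed into $\delta$. The remaining factor $\Delta_r^{3}$ in the stated bound $9\log\Delta_r$ is slack: it covers the distortion in normalising $\sum_W\mu(W)=1$ and in the variation of the base point $0$ across $D_\eps$.

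\textbf{Main obstacle.} The difficult step is justifying the derivative formula $P'=-\chi$ and its uniformity in $\eps$ for infinitely many branches. The first approach I would try avoids differentiating the equilibrium state. Use convexity of $P_\eps$ together with the difference quotients of the finite sums
$$\frac1n\log\sum_{z\in\vp^{-n}(0)} |D\vp^n(z)|^{-t}.$$
By \eqref{eqn:pressalpha} these sums are within $\Delta_r^{\pm t}$ of $e^{nP(t)}$, uniformly in $n$. The slopes of $\frac1n\log(\cdot)$ are weighted averages of $\frac1n\log|D\vp^n|$, which Step 1 controls. Since $P_\eps(t)$ is convex, its one-sided derivatives are bracketed by these chord slopes, and this delivers the estimate without requiring analyticity.
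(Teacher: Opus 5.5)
Your proposal is correct and follows essentially the paper's route: $P_\eps'=-\chi_{t,\eps}$, branch masses from the conformal measure and \eqref{eqnFolk}, $|D\vp_\eps|>2$ to turn the $\log\Delta_r$ variation of $\log|D\vp_\eps|$ into a multiplicative error, truncation by uniform exponential tails, and convergence on the finitely many large branches. One bookkeeping correction: with $\Delta_r<11/10$ and $|D\vp_\eps|>2$, the log-distortion step costs up to about $\Delta_r^{2}$, not $\Delta_r$. Each side therefore costs about $\Delta_r^{2t+2}$ rather than $\Delta_r^{3}$, and the two sides give $\Delta_r^{4t+4}\approx\Delta_r^{8}$ as in the paper. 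So the constant $9$ absorbs this cost; there is no $\Delta_r^{3}$ of genuine slack, though the stated bound still holds.
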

    \begin{proof}
        Writing $\mu_{\eps,t}$ for $\mu_{\vp_\eps,t}$ and similarly $m_{\eps,t}$ for the conformal measure,
        the Lyapunov exponent $\chi_{t,\eps} :=\int \log |D\vp_\eps| \, d\mu_{\eps,t}$
    satisfies
        $$
        P_\eps'(t) = -\chi_{t,\eps}.$$
        By uniform exponential tails, with $\rho$ small enough and $D_a, D_b$ as before, 
        $$
        \int_{D_b} \log |D\vp_\eps| \, dm_{\eps,t}$$ 
        can be made arbitrarily small; by~\eqref{eqnFolk} the same holds for 
        $$
        \int_{D_b} \log |D\vp_\eps| \, d\mu_{\eps,t}.$$ 
        Consequently, we may restrict our attention to $D_a$. 

        On $D_a$, we estimate 
        $$
        \chi_{a,t,\eps} := \int_{D_a} \log |D\vp_\eps| \, d\mu_{\eps,t}$$
        by 
        $$ 
        G(t,\eps) : =
        \sum_{z\in \vp_\eps^{-1}(0) \cap D_a} |D\vp_\eps(z)|^{-1} \log|D\vp_\eps(z)|e^{-P_0(1)}.$$
        Using the conformal instead of the invariant measure introduces a multiplicative error of $\Delta^t_r$; as $P(t)$ is approximately constant, approximating the (conformal measure) mass of a branch domain by $|D\vp_\eps(z)|^{-t}$ gives a multiplicative error of $\Delta^t_r(1+\delta)$;
lastly $\log|D\vp_\eps|$ varies by at most $\log|\Delta_r|$ on each branch domain, crudely estimated by a multiplicative error of $\Delta_r^2$ using $|D\vp|>2$.

        Combining the estimates, 
        $$
        \left| \log \frac{\chi_{t,\eps}}{\chi_{t,0}} \right| 
        \leq
        \left| \log \frac{\chi_{a, t,\eps}}{\chi_{a,t,0}} \right| +\delta
        \leq \left| \log \frac{G(t,\eps)}{G(t,0)} \right| + \log \Delta_r^{4t+4} + 2\delta.$$

        Noting that $G(t,\eps) \to G(t,0)$ as $\eps \to 0$ and replacing $\delta$ by $\delta/3$ completes the proof. 
    \end{proof}

\section{Lyapunov exponent for $f_{-2}$}\label{sec:Lyap}

For the parameter $-2$,  the map $f_{-2}$ is trigonometrically conjugate to the tent map $T(x) = 1-2|x|$ and  has an invariant probability measure $\nu_0$ on $[-2,2]$ with density
 $$
 \frac{1}{\pi\sqrt{4-x^2}}$$ 
 and  Lyapunov exponent
 $$\int \log|Df_{-2}| \, d\nu_0 = \log 2.$$
 Let $q_M  \in \R^+$ be as defined in \S\ref{secumqm}.
 For the first return map\footnote{
     If $f :X \to X$ has an ergodic, invariant, probability measure $\nu$ and $A \subset X$ with $\nu(A) >0$, let $A_n\subset A$ be the subset of points in $A$ with first return time $n$. Then 
     $$
     \nu = \sum_{n\geq1} \sum_{j=0}^{n-1} f^j_* (\nu_{|A_n}).$$}
     $R_{-2}$ to $B(0,q_M)$, applying the Chain Rule,
 $$
 \int_{B(0,q_M)} \log |DR_{-2}| \, d\nu_0 = \int_{[-2,2]} \log|Df_{-2}| \, d\nu_0 =  \log 2 .$$
 Using the density estimate,
 \begin{equation}\label{eqn:LE2}
     \lim_{M \to \infty} \frac{q_M}{\pi} \int \log |DR_{-2}| \, d\mu_0 = \log 2,
 \end{equation}
     where $\mu_0$ is the normalised restriction of $\nu_0$ to $B(0,q_M)$.

 \section{Misiurewicz maps}\label{sec:mis}

 For $\kappa$-Misiurewicz maps $f_c$, $c\in \R$, we have an optimal result. Define $q_M=q_M(c)$ as in~\S\ref{secumqm}. For $M$ large enough, $q_M < \kappa$. 
 Let $\vp$ be the first return map to $U_M=U_M(c)$. By Lemma~\ref{lemUareNice}, $U_M$ is a nice set so $\vp$ is a generalised Cantor repeller. By Lemma~\ref{lemFmTails}, the first entry maps to $U_M$ have uniform exponential tails. Consequently, so do the maps $\vp$  for $c \in B(-2, \eps_1)$, say. Each branch is extensible over $B(0,\kappa)$, when restricted to the $\kappa$-Misiurewicz setting.

 Let $D_a$ denote the union of those branch domains of $\vp$ which intersect the real axis. Let $D_b$ denote the union of those intersecting the imaginary axis. $D_a$ covers  the interval $(-q_M,q_M)$ up to a set of zero measure. 
 As $p(c)$ moves analytically with speed $-1/3$ at $c=-2$, 
\begin{equation}\label{eqn:pest}
    \sqrt{c+p} = \sqrt{2\eps/3 + O(\eps^2)} = \sqrt{2\eps/3} (1+ O(\eps)).
\end{equation}
 Thus the set $iD_b \cap \R$ has one-dimensional Lebesgue measure 
 $2\sqrt{2\eps/3}$, up to a factor of $1+C_0\eps$ for some universal constant $C_0$.
 With $Q_a, Q_b$ as per Section~\ref{sectBinary}, we estimate $Q_a$ using Lemma~\ref{lemQa} and $Q_b$ using the estimate for $iD_b\cap\R$ and bounded distortion.

 The hypotheses of Proposition~\ref{propBTD} hold for $t=1$ with $\alpha_1 = \alpha_2 = 1$, 
 $$\beta_1 = C^{-1}\frac{1}{2q_M}2\sqrt{2\eps/3} (1+C_0\eps)^{-1},$$ 
 $\beta_2 = C^2\beta_1(1+C_0\eps)^2$
 and $C= \Delta_{\diam(U_M)/\kappa}.$ Recall that $q_M(c) \to q_M(-2)$ as $c+2 = \eps \to 0$. 

 Consequently, 
 \begin{equation*} 
     \log\left(1 + \frac{C^{-2}}{q_M}\sqrt{2\eps/3}(1+C_0\eps)^{-1}  \right)
 \leq P(1) \leq 
    \log\left(1 + \frac{C^2}{q_M}\sqrt{2\eps/3}(1+C_0\eps) \right), 
 \end{equation*}
 and, for $\eps$ small enough,
 \begin{equation*}   \frac{C^{-3}}{q_M}\sqrt{2\eps/3}(1+C_0\eps)^{-1}  
 \leq P(1) \leq 
     \frac{C^2}{q_M}\sqrt{2\eps/3}(1+C_0\eps). 
 \end{equation*}

    On a neighbourhood (depending only on $M$) of $t=1$, 
    the slope of the pressure function, $P'(t)$,
    by Lemma~\ref{lemPslope} and \eqref{eqn:LE2}, 
    equals  $-\frac{\pi}{q_M} \log 2$, up to a factor of $\delta_1(M)>1$, with $\delta_1(M) \to 1$ as $M \to \infty$. Recall that, for each $M$, estimates hold for sufficiently small $\eps$. 
    By Lemma~\ref{lemPconverges}, $P(1) \to 0$ as $\eps \to 0$. 

    Let $\Lambda := \bigcap_{n\geq0} \vp^{-n}(U_M(c))$. 
    Recall that $\HD(\Lambda) = P^{-1}(0).$
    Elementary geometry together with the above estimates for $P(1)$ and $P'(t)$ in a neighbourhood of $1$ give, for small enough $\kappa$-Misiurewicz parameters $\eps>0$,
    $$
    C^{-3}
\delta_1^{-1}(M) 
(1+C_0\eps)^{-1} 
<
    \frac{
        \HD(\Lambda) -1}{\Omega \sqrt{\eps}} 
     < 
     C^2 \delta_1(M) 
 (1+C_0\eps)  ,
    $$
    where $\Omega = \sqrt{\frac{2}3} \frac{1}{\pi \log2}$. As $M$ increases, $q_M \to 0$ and $\diam(U_M) \to 0$ so $C \to 1$. 

    Since $\HD(\Lambda) = \HD(\J_c)$, we deduce that, provided that we restrict to real $\kappa$-Misiurewicz parameters, 
    $$
    \lim_{c \to -2}
    \frac{
        \HD(\J_c) -1}{\Omega \sqrt{\eps}} = 1,
        $$
        completing the proof of Theorem~\ref{theo:upper}.

\section{Extensibility, inducing and exponential tails}\label{sec:tails}
    The topological construction of our (generalised) Cantor repeller culminates in Proposition~\ref{prop:CantorGoal}, following several refining inductive ingredients. 

    Proposition~\ref{prop:linkage} says that the branch domains can be separated into two groups,  one collection $D$ going almost horizontally from $-q_M$ to $q_M$ and another collection $D'$ joining the two points $f_c^{-1}(-p)$ almost vertically, each with controlled gaps. 

\subsection{The integer $N = N(c)$}
The integers $N+1, N+2$ will be the return times of branches of the first return map to $U(c)$ which may have large or uncontrolled distortion (for example, one such (\emph{central}) branch domain may contain $0$, the others will be adjacent). 

Recall that $\zeta$ is the inverse branch of $f_c$ fixing $p(c)$.
\begin{lem} \label{lem:zetanspeed}
    Suppose $c \mapsto w(c)$ is holomorphic and $w(c) \in B(1,2.2)$. Let $w_n(c) := \zeta^n(w(c))$. 
    For $\eps$ small and $n \geq 10$, 
    $$
    |w'_n(c) + 1/3| < 1/10 + |w'(c)| |D\zeta^n(w)|.
    $$
\end{lem}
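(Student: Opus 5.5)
We differentiate the identity $f_c(w_n(c)) = w_{n-1}(c)$ with respect to $c$, exactly as in the derivation of \eqref{eqngenspeed}. Since $w_n = \zeta^n(w)$ means $f_c^n(w_n(c)) = w(c)$, formula \eqref{eqngenspeed} with $\rho = w$ gives
$$
w_n'(c) = \frac{w'(c)}{Df_c^n(w_n(c))} - T_n(w_n(c)) = w'(c)\, D\zeta^n(w(c)) - T_n(w_n(c)).
$$
The first term is bounded in modulus by $|w'(c)||D\zeta^n(w)|$, which is the error term in the statement. It therefore suffices to show that $|T_n(w_n(c)) - 1/3| < 1/10$ for $n\geq 10$ and $\eps$ small.

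Write $z = w_n$, so $f^j_c(z) = w_{n-j} = \zeta^{n-j}(w)$. Then
$$
T_n(z) = \sum_{j=1}^n \frac{1}{Df^j_c(z)} = \sum_{j=1}^n D\zeta^j(w_{n-j}) = \sum_{k=0}^{n-1} D\zeta^{n-k}(w_k),
$$
using $Df^j_c(z)^{-1} = D\zeta^j(f^j_c(z))$ along the $\zeta$-chain. We split the sum into the terms where $w_k$ is already near $p$ and the terms where it is not. Since $w \in B(1,2.2) \subset B(2,3.2)$, the chain $\zeta^k(w)$ converges to $p$, and by \eqref{eqnzetadist} we have $|w_k - p| \lesssim (3/5)^k$. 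For $k\geq k_0$ (with $k_0$ a fixed integer chosen so that $w_k\in B(p,\delta')$), each factor $D\zeta(w_i) = 1/(2w_i)$ is within a small relative error $\eta$ of $1/(2p)\approx 1/4$. Hence
$$
D\zeta^{n-k}(w_k) = (2p)^{-(n-k)}(1+O(\eta))^{n-k},
$$
and, because $w_i\to p$ geometrically, the accumulated error $\prod_{i\ge k}(1+O(|w_i-p|))$ is uniformly close to $1$. Summing this geometric series gives $\sum_{m\geq1}4^{-m} = 1/3$ up to an error tending to $0$ as $k_0$ grows and $\eps\to0$. This is the mechanism behind Lemma~\ref{lem:thirdquarter}. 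The finitely many indices with $k<k_0$ carry the factor $|D\zeta^{n-k}(w_k)|\le (3/5)^{n-k}$. Their total contribution is therefore at most $\sum_{m\ge n-k_0+1}(3/5)^m$.

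The main obstacle is making these two error budgets fit under $1/10$ uniformly for all $n\ge 10$. The early terms are only controlled by the crude contraction $3/5$, and their contribution $\tfrac{3}{2}(3/5)^{n-k_0}$ is small only if $n-k_0$ is large. So $k_0$ cannot be too big, yet it must be big enough that the late terms see a derivative close to $1/4$. To reconcile this we would use the sharper bound $|D\zeta(u)| = 1/(2|u|)$ directly. Points of $B(1,2.2)$ have $|u|$ bounded below only weakly, but after one application of $\zeta$ the image lies in the right half-plane near $[\sqrt{2},2]$ (roughly $|w_1|\ge 1.2$), giving $|D\zeta(w_k)|\le 0.42$ for $k\ge1$. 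Moreover $|D\zeta(w_k)|\le 0.27$ once $k\ge 3$ or so. With these explicit bounds the early terms contribute at most about $(0.42)^{n-3}\cdot(\text{small constant})$, which is below $1/50$ for $n\ge10$. The late terms are within $1/50$ of $1/3$ by the same computation as in Lemma~\ref{lem:thirdquarter}. The final step is a careful numerical check of these constants, possibly enlarging the lower threshold on $n$ implicit in ``$n\ge10$'' only through the choice of $\eps$ small.
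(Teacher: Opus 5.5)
Your proof matches the paper's (one-line) argument: apply \eqref{eqngenspeed} with $\rho=w$ to get $w_n'=w'\,D\zeta^n(w)-T_n(w_n)$, then show $|T_n(w_n)-1/3|<1/10$ by comparing $T_n(w_n)=\sum_k D\zeta^{n-k}(w_k)$ with $\sum_{j\ge1}4^{-j}=1/3$. Two numerical claims in your last paragraph are wrong: $D\zeta(u)=1/(2\zeta(u))$, not $1/(2u)$, and $|w_1|$ can be as small as about $\sqrt{0.8}$ when $w$ is near $-1.2$. Nothing depends on them, though, because $D\zeta^{n-k}(w_k)=D\zeta^{k_0-k}(w_k)\,D\zeta^{n-k_0}(w_{k_0})$ shows the early terms are already $O((1/4+\eta)^{n-k_0})$; so the obstacle you describe does not arise, and a fixed small $k_0$ works uniformly for all $n\ge 10$.
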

\begin{proof}
Apply \eqref{eqngenspeed} and the derivative estimate $Df_{-2}(p(-2)) = 4$, noting that $
\sum_{j\geq1} 4^{-j} = \frac13.$
\end{proof}

For $\eps$ small, the sets $\zeta^n(B(0,1.2))$ form a chain of topological discs leading towards $p$. 
Indeed, $\sqrt{2} \approx \zeta(0) \notin B(0,1.2)$ and $0 \notin \zeta(B(0,1.2))$, from which it follows that if  
$0\leq m < n$  then 
$$\zeta^n(B(0,1.2)) \cap \zeta^m(B(0,1.2)) \ne \emptyset$$ if and only if $m+1=n$. 
If $\eps$ is real, then the topological discs are symmetric about the real axis. 

For $\eps = c+2 >0$ real, we let $N(c)$ be the minimal integer  
     with $c \in -\zeta^N(B(0,1.2))$. If $m \notin \{N, N+1\}$ then $c \notin -\zeta^m(B(0,1.2))$. This defines a function $N : (-2, -1) \to \R$. We extend it to complex values by $N(c) = N(\Re(c))$. 

     Henceforth we drop the dependence of $c$ from notation and just write $N$. 

\begin{lem} \label{lem:cleanbranches}
    There exist $\delta >0$ such that, if $c = -2 + x + iy$ with $0< x < \delta$ and $|y| < \delta x$,  if  $n \notin \{N, N+1\}$ then $\dist{c, -\zeta^n(B(0,1.1))} > \delta x$. 
     \end{lem}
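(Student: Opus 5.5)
Unwinding the definition of $N$ shows the lemma is a quantitative separation statement. For real $c=-2+x$ the discs $-\zeta^m(B(0,1.2))$ form a chain toward $-p$. Only consecutive ones intersect, and $c$ lies in the $N$th or $(N+1)$th disc. The plan is to show that the slightly smaller discs $-\zeta^n(B(0,1.1))$, for $n\notin\{N,N+1\}$, stay a distance $\gtrsim x$ from $c$. Then a perturbation argument in $y$ handles complex $c$.

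\emph{Geometry of the chain.} By \eqref{eqn:zetadist}, $\zeta^n$ has uniformly bounded distortion on $B(2,3.2)$, and $|D\zeta^n|\sim|p-\zeta^n(z)|\sim|2p|^{-n}$. Since $B(0,1.1)$ is compactly contained in $B(0,1.2)$ with definite modulus, $\zeta^n(B(0,1.1))$ is compactly contained in $\zeta^n(B(0,1.2))$. In fact it sits at distance $\gtrsim 4^{-n}$ from its complement, and its distance from $p$ and its diameter are both $\sim 4^{-n}$.

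\emph{Scale of $N$.} Since $c+p\approx \tfrac{2}{3}x$ (by \eqref{eqn:pest} or the speed of $p$), the point $-c$ lies at distance $\sim x$ from $p$. By \eqref{eqn:epslog4}, $4^{-N}\sim x$.

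\emph{Separation for real $c$.} For $n\ge N+3$, the disc $-\zeta^n(B(0,1.2))$ is disjoint from $-\zeta^N$ and $-\zeta^{N+1}$ (of the $1.2$-chain). Hence $-\zeta^n(B(0,1.1))$ is at distance $\gtrsim 4^{-N}\sim x$ from $c$, using the modulus gap in the $(n-1)$th disc: one passes through the $(n-1)$th link. Similarly, for $n\le N-1$, the disc $-\zeta^n(B(0,1.2))$ does not contain $c$. Therefore $c$ is separated from $-\zeta^n(B(0,1.1))$ by the annular collar $-\zeta^n(B(0,1.2)\setminus B(0,1.1))$, of width $\gtrsim 4^{-n}\ge 4^{-N+1}\gtrsim x$. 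For $n=N+2$, note that $c\in-\zeta^N(\cdot)\cup-\zeta^{N+1}(\cdot)$ and $c\notin -\zeta^{N+2}(B(0,1.2))$, so the same collar argument applies with width $\sim 4^{-N-2}\sim x$. This yields a constant $2\delta_0$ with $\dist{c,-\zeta^n(B(0,1.1))}>2\delta_0 x$ for real $c$.

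\emph{Complex perturbation.} Now take $c=-2+x+iy$ with $|y|<\delta x$. We compare with $c_0=-2+x$, for which $N(c)=N(c_0)$ by definition. The point $c$ moves by $|y|$. By Lemma~\ref{lem:zetanspeed}, applied to boundary points $w(c)\in\partial B(0,1.1)$ (constant, so $w'=0$), the discs $-\zeta^n(B(0,1.1))$ move with speed $<1/3+1/10$ when $n\ge10$. For $n<10$ they move by $O(|y|)$ anyway and are at distance $\sim1$ from $c$. So each distance changes by at most $2|y|<2\delta x$. Choosing $\delta<\delta_0/2$ gives $\dist{c,-\zeta^n(B(0,1.1))}>\delta_0x>\delta x$.

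\emph{Main obstacle.} The delicate point is the borderline $n=N+2$ and $n=N-1$ cases: one needs uniformity of the collar width relative to $x$ independent of where $c$ falls in the $N$/$N+1$ links. This should follow from bounded distortion plus $4^{-N}\sim x$, but constants must be tracked carefully.
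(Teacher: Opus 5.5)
Your overall route is the paper's. You establish the bound for the real comparison parameter $c_0=-2+x$, which has the same $N$, and then pass to $c=c_0+iy$ via Lemma~\ref{lem:zetanspeed}, since $c$ moves at speed $1$ and the discs at speed less than $1/3+1/10$. The paper does exactly this, using $B(0,1.15)$ as the intermediate disc and simply asserting the real estimate. Several of your steps are fine: the perturbation step, the treatment of $n<10$, and the cases $n\le N-1$ and $n=N+2$. In those cases $c_0\notin-\zeta^n(B(0,1.2))$, so Koebe gives a collar of width $\gtrsim|2p|^{-n}$, and this is $\gtrsim x$ as long as $n\le N+O(1)$.

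The step that fails as written is the case $n\ge N+3$.
\begin{itemize}
\item Disjointness of $-\zeta^n(B(0,1.2))$ from the $N$th and $(N+1)$st discs carries no quantitative information.
\item The collar of the $(n-1)$th (or $n$th) link has width $\sim|2p|^{-n}$, which is $o(x)$ as $n-N\to\infty$.
\item No fixed link of the chain separates $c_0$ from the far discs.
\end{itemize}
So ``one passes through the $(n-1)$th link'' yields only a bound $\gtrsim|2p|^{-n}$, not one $\gtrsim x$ uniformly in $n$. This, not $n=N-1$ or $n=N+2$, is the real obstacle.

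The repair uses facts you already listed.
\begin{itemize}
\item By \eqref{eqn:zetadist}, $\zeta^n(B(0,1.1))\subset B(p,C_1|2p|^{-n})$.
\item $|p+c_0|\geq x/2$.
\item From $-c_0\in\zeta^N(B(0,1.2))$ and \eqref{eqn:zetadist}, $|2p|^{-N}\le C_2x$.
\end{itemize}
Fix $K$ with $|2p|^K>4C_1C_2$. For $n\ge N+K$ the disc lies in $B(p,x/4)$, hence at distance at least $x/4$ from $-c_0$. The finitely many $n$ with $N+2\le n<N+K$ are handled by your collar argument.

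Alternatively, write $-c_0=\zeta^N(z_0)$ with $z_0\in(-1.2,1.2)$. Then apply Koebe's $1/4$ theorem to $\zeta^N$ on a disc of definite radius about $z_0$ that misses $\zeta^k(B(0,1.1))$ for all $k\ge 2$. This gives distance $\gtrsim|D\zeta^N(z_0)|\sim x$ for all $n\ge N+2$ at once.

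Finally, write $|2p|^{-n}$ rather than $4^{-n}$ in your chain geometry. Since $|2p|<4$, the two are not uniformly comparable once $n\gg 1/x$.
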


\begin{proof} 
    Let $c_0 = -2 + x$.
    For $n \notin \{N, N+1\}$, 
    $$
    \dist{c_0, -\zeta_{c_0}^n(B(0, 1.15))} ~\grtsim~ x.$$
    Hence, by Lemma~\ref{lem:zetanspeed}, for all $c' \in B(c_0, \delta x)$,
    $$\dist{c', -\zeta_{c'}^n(B(0, 1.15))} ~\grtsim~ x.$$
\end{proof}
\begin{lem} \label{lem:vwdist}
    There exist $\delta >0$ such that, if $c = -2 + x + iy$ with $0< x < \delta$ and $|y| < \delta x$,  if  $n \geq N$ then $-\zeta^n(B(0,1.1)) \subset B(c, 20\eps)$. 
     \end{lem}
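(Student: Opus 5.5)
The plan is to first handle the real parameter $c_0=-2+x$ and then move to complex $c$ with the speed estimate of Lemma~\ref{lem:zetanspeed}, exactly as in the proof of Lemma~\ref{lem:cleanbranches}.

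For $c_0$ real, the set $-\zeta^n(B(0,1.1))$ has diameter comparable to $|D\zeta^n|\sim |Df_{c_0}(p)|^{-n}$ by \eqref{eqn:zetadist}, and it lies within distance comparable to $|D\zeta^n|$ of $-p$. By definition of $N$, $c_0\in-\zeta^N(B(0,1.2))$. The distortion bound \eqref{eqn:zetadist} on $B(2,3.2)$ then gives $|c_0+p| \sim |2p|^{-N}$. Combining this with $|c_0+p| = \tfrac{2x}{3}+O(x^2)$ yields $|D\zeta^N|\lesssim x$.

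We must also control the constant, since the claim is the explicit radius $20\eps$. Here I would use that $\zeta$ is nearly linear near $p$ with multiplier close to $1/4$. Then $\zeta^n(B(0,1.1))$ sits inside a disc about $p$ of radius roughly $4^{-(n-N)}$ times the radius of $\zeta^N(B(0,1.1))$ about $p$. Since $c_0$ already lies in $-\zeta^N(B(0,1.2))$, this latter radius is at most a bounded multiple of $|c_0+p|\approx 2x/3$. More precisely, $\zeta^N(B(0,1.2))$ is a distorted copy of the disc under a map with bounded distortion, of size comparable to its distance to $p$, and the ratio of its size to its distance to $p$ is bounded by a universal constant. Hence for every $n\ge N$, every point of $-\zeta^n(B(0,1.1))$ is within $Kx$ of $-p$, with $K$ universal. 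The triangle inequality then gives distance at most $(K+1)x$ to $c_0$. I would check numerically, via the Koebe-type bounds and the explicit geometry of $\zeta^k(B(0,1.2))$ for $c=-2$ (where $\zeta$ is conjugate to halving angles of $2\cos$), that $K+1<19$. If needed, this is where the constant $20$ has slack.

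For complex $c=-2+x+iy$ with $|y|<\delta x$, so that $|c-c_0|<\delta x$, I would integrate Lemma~\ref{lem:zetanspeed} along the segment from $c_0$ to $c$, with $w(c)$ ranging over points of $B(0,1.1)$ held fixed. This shows that each point of $\zeta^n_{c}(B(0,1.1))$ moves by at most $(1/3+1/10+O(1))\delta x$. Since $p$ moves with speed about $-1/3$, distances to $-p$ and to $c$ change by $O(\delta x)$. Also $|\eps|\ge x$. So for $\delta$ small, the inclusion $-\zeta_c^n(B(0,1.1))\subset B(c,20|\eps|)$ follows from the real case with margin.

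The main obstacle is the quantitative constant in the real case, namely bounding the size of $\zeta^N(B(0,1.1))$ relative to $|c_0+p|$ by an explicit number rather than $\lesssim$. I would handle this by comparing with the exact Chebyshev conjugacy $z=w+w^{-1}$ near $p$, where the pullbacks are explicit.
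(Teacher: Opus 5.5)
Your proposal is correct in outline, and its second half, passing from $c_0=-2+x$ to $c=-2+x+iy$ by integrating Lemma~\ref{lem:zetanspeed} with $w$ held fixed, is exactly the paper's last step. Where you differ is the real-parameter step.

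\textbf{The paper's route.} The paper never examines the two-dimensional shape of $\zeta^n_{c_0}(B(0,1.1))$. It takes the real traces $V_n=\zeta^n_{c_0}(B(0,1.1))\cap\R$ and uses $p(c_0)+c_0=\frac23x+O(x^2)$ to get $V_{N+2}\subset B(p,x)$, hence $V_n\subset B(p,16x)$ for $n\ge N$. It then invokes Poincar\'e discs: $\zeta_{c_0}$ is real and sends non-real points to non-real points, so Schwarz--Pick on slit planes gives $\zeta^n_{c_0}(B(0,1.1))\subset D_n$, the disc with diameter $V_n$. This produces an explicit constant from interval arithmetic alone, but it relies essentially on the real symmetry of $f_{c_0}$.

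\textbf{What your route buys.} Your distortion argument is intrinsically complex. It gives the $O(x)$ bound at once, which is all the later uses actually need (for instance $|v_*|,|w_*|=O(|\eps|^{1/2})$). The price is that the explicit $20$ becomes the real work, as you say.

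\textbf{How the constant must be obtained.} Koebe alone will not give it. With $\zeta^N_{c_0}$ univalent on roughly $B(p,4)$ and $B(0,1.2)$ lying in the annulus $0.8<|z-p|<3.2$, the growth theorem bounds the ratio $\sup|z-p|/\inf|z-p|$ over $\zeta^N(B(0,1.2))$ only by about $144$.

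Nor can the Chebyshev comparison be made directly between the sets $\zeta^n_{c_0}(B(0,1.1))$ and $\zeta^n_{-2}(B(0,1.1))$ near $2$. At the working scale $x$ the fixed point has moved by $x/3$, so what transfers is only the shape relative to $p(c)$. Your scale-invariant ratio argument is the right frame for this.

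Concretely, write $\zeta^n_c=\phi_c^{-1}\circ\lambda_c^n\circ\phi_c$, where $\phi_c$ is the Koenigs linearizer at $p(c)$ normalized by $\phi_c'(p)=1$. It is univalent on a fixed disc about $p$ containing $\overline{B(0,1.2)}$, depends continuously on $c$, and $\phi_{-2}(z)=-(\arccos(z/2))^2$. Then $\zeta^N_{c_0}(z)-p=\lambda_{c_0}^N\phi_{-2}(z)(1+o(1))$ uniformly for $z\in B(0,1.2)$ as $x\to0$.

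On $B(0,1.2)$, $|\phi_{-2}|$ ranges between $\arccos(0.6)^2\approx0.86$ and $\arccos(-0.6)^2\approx4.90$, so the ratio is about $5.7$. Since $-c_0\in\zeta^N_{c_0}(B(0,1.2))$ and $|p+c_0|\approx\frac23x$, you get $\zeta^N_{c_0}(B(0,1.2))\subset B(p,3.8x)$. Hence $-\zeta^n_{c_0}(B(0,1.1))\subset B(c_0,4.5x)$ for all $n\ge N$ and small $x$. The check you propose therefore succeeds with ample room, but this linearizer computation is the argument to write down, not ``Koebe-type bounds''.
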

\begin{proof} 
    For $c_0 = -2+x$, let $V_n = \zeta_{c_0}^n(B(0,1.1)) \cap \R$. 
    Recall that $p(c_0) + c = \frac23 x + O(x^2)$.
    Thus $|V_{N+2}| \subset B(p(c_0), x)$ so $V_n \subset B(p(c_0), 16x)$ for $n\geq N$. 
    By \emph{Poincar\'e discs} \cite[Fact 2.1.2]{Graczyk_1998}, if $D_n$ is the disc with diameter $V_n$, 
    $$
    \zeta^n_{c_0}(B(0,1.1)) \subset D_n \subset B(p(c_0), 16x).$$
    Then use Lemma~\ref{lem:zetanspeed} again. 
\end{proof}

\subsection{Initial inducing} \label{sec:initin}
Recall that $U(c)$ is the fundamental domain defined in~\S\ref{sec:funddom}. The following lemma gives definite Koebe space between $U(c)$ and the boundary of a fixed larger domain $\hat U$, for all $c$ near $-2$. 
\begin{lem} 
    There exists a fixed topological disc $\hat U$ with $\overline U(-2) \subset \hat U \subset B(0,1.1)$  with 
    $$f_c(U) \supset -\zeta_c^n(\hat U)$$
    for all $n>1$ and $c=-2+\eps$, $\eps$ sufficiently small. 
    Moreover, $\hat U$ can be chosen so that $f_c^{-1}(-\zeta_c(\hat U)) \subset \hat U$. 
\end{lem}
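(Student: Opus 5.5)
The plan is to build $\hat U$ as a slight thickening of $\overline{U(-2)}$ and to obtain both inclusions first at $c=-2$ with definite room to spare. Compactness and continuity in $c$ then carry them over to all small $\eps$, using the uniform contraction $|D\zeta_c|<3/5$ to control the infinitely many $n$ uniformly.

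Step 1 (the case $c=-2$). At $c=-2$, $U(-2)$ is bounded by arcs of $\pm\Gamma_0$ and of $\partial\cE_a$, and it contains $0$. The rays $\pm\Gamma_0$ pass through $\pm q(-2)=\mp1$. The map $f_{-2}$ sends $U(-2)$ with degree two onto a region bounded by $\Gamma_0$ (which passes through $q=-1$) and by $\partial\cE_{a'}$, where $a'=(a+4)^2/2-8>a$ by \eqref{eqn:greens}. This image region contains the real segment $(-2,-1)$ together with a neighbourhood of it inside $\cE_{a'}$, except near the endpoint $q$.

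On the other side, $-\zeta_{-2}^n(B(0,1.1))$ for $n\geq 2$ is a chain of discs accumulating at $-2$. The introduction notes that $f_{-2}(B(0,1.1))=B(-2,1.21)$ compactly contains these sets. More precisely, $\zeta^2_{-2}(B(0,1.1))$ lies near $\sqrt{2+\sqrt2}\approx 1.85$, well to the right of $1$, and inside $\cE_{a'}$ with definite margin, because $\zeta$ contracts Green's ellipses: $\zeta(\cE_b)=\cE_{b''}\cap\{\Re>0\}$ with $b''<b$. So for $n\geq2$ the closure of $-\zeta_{-2}^n(B(0,1.1))$ sits compactly inside $f_{-2}(U(-2))$, at a uniform positive distance $d_0$ from its boundary. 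Here we use that the union over $n\geq2$ is contained in a fixed compact set $\{\Re\le -1.8\}\cap\overline{\cE_{a''}}$ with $a''<a'$. The same then holds for any $\hat U$ with $U(-2)\subset\hat U\subset B(0,1.1)$.

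For the last claim, $f_{-2}^{-1}(-\zeta_{-2}(\hat U))=\zeta_{-2}^{-1}$-type preimage: $-\zeta_{-2}(\hat U)$ is a disc near $-\sqrt2$, and its preimage under $z\mapsto z^2-2$ is $\{\pm\sqrt{2-w}\}$. For $w$ near $-\sqrt2$ this is near $\pm\sqrt{2+\sqrt2}\cdot$, which is not small. So I would instead read the statement as saying the two preimages of $-\zeta(\hat U)$ are mapped into $\hat U$. Concretely, $f^{-1}(-\zeta(z))$ equals $\pm\sqrt{-\zeta(z)-c}$, which has modulus $\approx\sqrt{2-\sqrt2}\approx0.77$ when $z\approx0$. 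I will choose $\hat U$ to be a neighbourhood of $\overline{U(-2)}$ that is invariant under the two contracting maps $z\mapsto \pm\sqrt{-\zeta_{-2}(z)+2}$. This is possible because these maps send $B(0,1.1)$ strictly inside itself. Their derivative is bounded by $|D\zeta|/(2|\sqrt{\cdot}|)<1$ on the relevant region, so they strictly contract some hyperbolic metric. One can therefore take $\hat U$ to be a small hyperbolic neighbourhood of $\overline{U(-2)}$ that also contains the images under these maps. Indeed $\overline U(-2)$ already contains $\pm\sqrt{2-\zeta(U)}$ up to the boundary arcs on $\pm\Gamma_0$, since $U$ is a fundamental domain. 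So an $r$-neighbourhood, in a metric contracted by both maps, works, with $r$ small enough that it stays in $B(0,1.1)$.

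Step 2 (perturbation). For $\eps$ small, $U(c)$ converges to $U(-2)$ in the Hausdorff sense, because $\Gamma_\eps\subset B(\Gamma_0,2|\eps|)$. Likewise $f_c(U(c))$ moves by $O(|\eps|)$ near the relevant compact set, and $\zeta_c\to\zeta_{-2}$ uniformly on $B(0,1.2)$. Lemma~\ref{lem:zetanspeed} gives $|\partial_c\zeta_c^n(w)|\le 1/3+1/10+o(1)$ uniformly in $n$. So every set $-\zeta_c^n(\hat U)$ lies within $O(|\eps|)<d_0/2$ of $-\zeta_{-2}^n(\hat U)$, uniformly in $n$. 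Hence the inclusion persists. The second inclusion persists because its $c=-2$ version is compact with margin, and the maps $\pm\sqrt{-\zeta_c-c}$ depend continuously on $c$.

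The main obstacle is the uniformity in $n$ near the accumulation point $-p$, where the chain approaches $\partial f_c(U)$ only through the cusp at $-2$. This is exactly where the speed estimate of Lemma~\ref{lem:zetanspeed} is needed, since the speed $\approx -1/3$ matches the motion of $p$, and the ellipse $\partial\cE_{a'}$ stays at distance $\sim a'$ from $-2$.
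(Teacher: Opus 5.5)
\textbf{Gap in Step 1.} Your overall scheme is the paper's: prove both inclusions at $c=-2$ with a margin, then perturb, with uniformity in $n$ coming from the contraction of $\zeta_c$. The problem is the shortcut you use for the first inclusion, which is false. You claim that $-\zeta_{-2}^n(B(0,1.1))$, $n\ge 2$, lies compactly in $f_{-2}(U(-2))$, so that any $\hat U\subset B(0,1.1)$ works.

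\begin{itemize}
\item $f_{-2}(U(-2))\subset\cE_{a'}$ with $a'=4a+a^2/2$. For the small fixed $a$ of the paper this is a thin ellipse: its height above $\Re z\approx -1.83$ is about $1.1\sqrt a$, which is below $0.04$ when $a=10^{-3}$.
\item $B(0,1.1)$ is not contained in $\cE_a$ at all, only in $\cE_{0.57}$. So your ellipse-contraction remark only gives $\zeta_{-2}^2(B(0,1.1))\subset\cE_{0.035}$ or so.
\item Concretely, $-\zeta_{-2}^2(B(0,1.1))$ contains points near $-1.83\pm 0.1i$, which lie outside $\cE_{a'}$.
\end{itemize}

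So no margin $d_0$ exists for the set you chose. The lemma genuinely needs $\hat U$ to be a thin neighbourhood of $\overline{U(-2)}$.

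\textbf{Repair.} Run Step 1 with $\hat U$ itself, as the paper does.

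\begin{itemize}
\item If $\hat U$ lies within $r$ of $\overline{U(-2)}$, then $\hat U\subset\cE_{a+2r}$.
\item Since $\zeta_{-2}(\cE_b)\subset\cE_{b''}$ with $b''<b/4$, every $\zeta_{-2}^n(\hat U)$ lies in $\cE_{(a+2r)/4}$. For small $r$ this is a definite distance inside $\cE_{a'}$.
\item For $n\ge 2$, $\zeta_{-2}^n(\hat U)$ stays close to $\zeta_{-2}^n(U(-2))$. That set lies to the right of the curve $\zeta_{-2}(-\Gamma_0)$ through $\sqrt3$, so $-\zeta_{-2}^n(\hat U)$ is a definite distance from $\Gamma_0$.
\item The limit point $-2=f_{-2}(0)$ is interior to $f_{-2}(U(-2))$, at distance $a'/2$ from $\partial\cE_{a'}$. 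So the closure of the whole chain is a compact subset of $f_{-2}(U(-2))$, and the ``cusp'' you single out as the main obstacle does not exist.
\end{itemize}

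\textbf{The second inclusion.} Your argument is correct once the arithmetic slip is removed. Since $f_{-2}^{-1}(w)=\pm\sqrt{w+2}$, the set $f_{-2}^{-1}(-\zeta_{-2}(\hat U))$ lies near $\pm\sqrt{2-\sqrt2}$. The statement therefore literally asks for $g_\pm(\hat U)\subset\hat U$ with $g_\pm=\pm\sqrt{-\zeta_c-c}$, and no reinterpretation is needed. The rest of your reasoning holds:
\begin{itemize}
\item $g_\pm$ maps $B(0,1.1)$ compactly into itself, with $|g_\pm'|<0.3$ there.
\item $g_\pm(\overline{U(-2)})\subset\overline{U(-2)}$.
\item Hence an $r$-neighbourhood of $\overline{U(-2)}$ is mapped into its $\lambda r$-neighbourhood for some $\lambda<1$, and this margin survives perturbation.
\end{itemize}
This is a clean version of the paper's remark that $f_{-2}$ expands near the rays. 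Taking $r$ small serves both claims at once, and your Step 2 then goes through.
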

\begin{proof}
    The boundary of $U(-2)$ is made of two dynamical rays and two elliptical arcs. Recalling the first part of Lemma~\ref{lemJellipse}, the ellipse gets mapped by $f_{-2}$ outside itself, and preimages are inside. 
    The two dynamical rays through $q, -q$ are mapped by $f_{-2}$ to the dynamical ray through $q$. 
    The set $\zeta_{-2}(U(-2))$ shares a common boundary with $U(-2)$ along the dynamical ray through $-q$. The sets $\zeta_{-2}^n(U(-2))$, $n>1$ are at a definite distance from the two dynamical rays and from the boundary of the ellipse. This remains true replacing $U(-2)$ by a neighbourhood $\hat U$ compactly containing $U$ and $-2$ by $-2+\eps$, for $\eps$ small. 

    For the case $n=1$, use that, in a neighbourhood of the two dynamical rays, the derivative of $f_{-2}$ is bigger than $1$. 
\end{proof}

    We denote by $\phi_0$ the first return map $\phi_0$ for $f_c$ to $U(c)$.  
    Let $V_1^+$ denote the connected component of $f_c^{-1}(-\zeta_c(U(c)))$ containing $q$ in its boundary, and $V_1^-$ the other, symmetric, connected component. These domains are the two boundary-adjacent domains.
            Denote by $V_N^\pm, V_{N+1}^\pm$, the (three or four) connected components of $f^{-1}_c(-\zeta_c^k(U(c)))$, $k \in \{N, N+1\}$. 
\begin{coro} \label{cor:extens}
    In $U(c)$, the branch domains of  $\phi_0$ are all mapped
    \begin{itemize}
        \item
            univalently onto $U(c)$ by $\phi_0$ and are
        \item
            extensible over $\hat U$ with extension domain contained in $U$,
    \end{itemize}
    except possibly 
    \begin{itemize}
        \item
            $V_1^{\pm}$, whose extension domains are not contained in $U$ but are in $\hat U$;
        \item
            $V_N^\pm, V_{N+1}^\pm$, whose extension domains may contain $0$. 
    \end{itemize}
    For $k \in \N\setminus \{1, N+1, N+2\}$, there are precisely two branch domains on which $\phi_0 = f_c^k$ (so the \emph{return time} is $k$). 

    Two extension domains (with return time not in $\{N+1, N+2\}$) overlap only if the corresponding branches are adjacent. 
\end{coro}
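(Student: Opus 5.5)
The plan is to describe every branch of $\phi_0$ through its first two iterates. Let $W$ be a branch domain of $\phi_0$ with return time $k\geq 2$. By the remark after \eqref{eqn:epslog4}, $f_c^2(W)\subset \zeta_c^{k-2}(B(0,1.1))$. Niceness (Lemma~\ref{lemUareNice}) then identifies the branch exactly: $f_c(W)$ is a component of $-\zeta_c^{k-1}(U(c))$, and $f_c^{k-1}$ maps it onto $U(c)$ along the chain $\zeta_c^{j}$, so $f_c^{k-1}|_{f_c(W)}=\zeta_c^{-(k-1)}\circ(-\,\cdot)$ is univalent. So $\phi_0|_W = \zeta_c^{-(k-1)}\circ(-f_c)$ on $W$, and $W$ is a component of $f_c^{-1}(-\zeta_c^{k-1}(U))$. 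The branch is univalent exactly when $0\notin W$, i.e. when $c\notin -\zeta_c^{k-1}(U)$. When this holds, $c$ is not in the closed set either, so $f_c^{-1}(-\zeta_c^{k-1}(U))$ has precisely two components. This gives the count of two branch domains for each return time $k$, away from the exceptional times. The return time $k=1$ cannot occur, because $f_c(U)$ lies near $-2$, outside $U$.

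For extensibility, I would pull back $\hat U$ instead of $U$. By the preceding lemma, $-\zeta_c^{k-1}(\hat U)\subset f_c(U)$ for $k-1>1$. So, whenever $c\notin -\zeta_c^{k-1}(\hat U)$, the set $-\zeta_c^{k-1}(\hat U)$ is a simply connected domain avoiding the critical value. Its preimage under $f_c$ then splits into two univalent components, each contained in $U$. The component containing $W$ is the extension domain $\hat W$, and $\phi_0$ maps it univalently onto $\hat U$. By Lemma~\ref{lem:cleanbranches} (or directly from the definition of $N$), for $k-1\notin\{N,N+1\}$ we have $\dist{c,-\zeta_c^{k-1}(B(0,1.1))}\grtsim x$. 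Since $\hat U\subset B(0,1.1)$, the critical value is avoided. This yields the first bullet list, with the exceptions $k\in\{N+1,N+2\}$; these are the domains $V_N^\pm,V_{N+1}^\pm$, whose extensions may reach $0$.

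For $k=2$ the preceding lemma gives only $f_c^{-1}(-\zeta_c(\hat U))\subset \hat U$, not $\subset U$. So the domains $V_1^\pm$ have extension domains in $\hat U$ but possibly crossing the rays $\pm\Gamma_\eps$. This is the second exception.

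Overlaps come from the chain structure. The discs $\zeta^n(B(0,1.2))$ meet only for consecutive indices, so $-\zeta_c^{k-1}(\hat U)$ and $-\zeta_c^{k'-1}(\hat U)$ are disjoint unless $|k-k'|\leq 1$. Hence two extension domains can meet only if they have the same or consecutive return times. Within one return time, the two preimage components are $\pm\hat W$, and they are disjoint because $0\notin\hat W$. For consecutive return times, taking preimages under $f_c$ on the correct side shows that only the adjacent pair can meet.

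The main obstacle is this last step: checking that a non-adjacent pair, e.g. $\hat W_k$ and $-\hat W_{k+1}$, does not overlap. I would handle it with the real symmetry at $c=-2$, where everything is explicit via the Joukowski coordinates, and then use continuity in $\eps$ for the finitely many small return times. For large return times, the uniform Koebe space provided by $\hat U$ should suffice.
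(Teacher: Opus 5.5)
Up to the overlap claim your argument is the route the paper intends. The paper gives no separate proof: the corollary is read off from the preceding lemma on $\hat U$, the chain property of the discs $\zeta^n(B(0,1.2))$, and the definition of $N$ together with Lemma~\ref{lem:cleanbranches}. Two small repairs are needed there. First, the remark after \eqref{eqn:epslog4} concerns first returns to $B(0,1.1)$, which may precede the first return to $U$. So $f_c(W)=-\zeta^{k-1}(U)$ should instead be derived from niceness of $U$ and the chain $\zeta^m(U)$, each piece of which is mapped by $f_c$ onto the previous one. Second, your claim that $c$ then avoids the closure of $-\zeta^{k-1}(U)$ is neither justified nor needed, since an unbranched double cover of a simply connected domain is trivial.

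The genuine gap is the last assertion. The chain property correctly reduces it to consecutive return times $k,k+1\notin\{N+1,N+2\}$. But the remaining step, that $\hat W_k$ meets only one of $\pm\hat W_{k+1}$, is asserted and then deferred to a plan that does not deliver it. Continuity from $c=-2$ is uniform only over finitely many return times, whereas the relevant ones run up to and beyond $N(c)\to\infty$. Koebe space controls the distortion of each branch on its own extension domain, but says nothing about whether two different extension domains intersect.

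What is missing is a uniform lifting argument using $f_c(-z)=f_c(z)$. Put
$$O:=f_c(\hat W_k)\cap f_c(\hat W_{k+1})=-\zeta^{k-1}\big(\hat U\cap\zeta(\hat U)\big).$$
Take $\hat U$ a thin enough neighbourhood of $\overline U$, which the proof of the preceding lemma permits. Then $\hat U\cap\zeta(\hat U)$ is a connected neighbourhood of the common boundary arc of $U$ and $\zeta(U)$ on $-\Gamma_\eps$, so $O$ is connected and $c\notin O$.

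Hence $f_c^{-1}(O)$ is an unbranched double cover of $O$, invariant under $z\mapsto -z$. It cannot be a single component, since that component would lie in both of the disjoint sets $\hat W_k$ and $-\hat W_k$. So $f_c^{-1}(O)=O_1\sqcup(-O_1)$. Being connected, $O_1$ lies in exactly one of $\pm\hat W_k$, say $\hat W_k$, and in exactly one of $\pm\hat W_{k+1}$, call it $\hat W_{k+1}$.

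Now $\hat W_k\cap(-\hat W_{k+1})\subset f_c^{-1}(O)=O_1\cup(-O_1)$. But $O_1\subset\hat W_{k+1}$ misses $-\hat W_{k+1}$, and $-O_1\subset -\hat W_k$ misses $\hat W_k$, so this intersection is empty. Moreover $O_1$ contains a lift of the common boundary arc of $-\zeta^{k-1}(U)$ and $-\zeta^{k}(U)$, so $\overline{W_k}\cap\overline{W_{k+1}}\neq\emptyset$. Thus the only pair that can overlap is the adjacent one. This works simultaneously for all $k$ and all small $\eps$, with no appeal to continuity or distortion.
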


Denote by $\phi_1$ the restriction of $\phi_0$ to domains with return time bounded by $N$. 
Note that, by Lemma~\ref{lem:expandingreturns}, $|D\phi_1| > 2$ and $|Df_c^2| > 2$ on the extension domains of $V_1^\pm$. 

On the boundary branch domain $V_1^+$ say, containing $q$ in its boundary, we do some inducing to obtain extension domains inside $U = U(c)$. Let $\psi_1$ denote the first entry map from $V_1^+$ to $U\setminus V_1^+$. 
We define $\phi_2$ on $V_1^+$ to be $\phi_1 \circ \psi_1$. Define it on $V_1^-$ symmetrically, $\phi_2(z) = \phi_2(-z)$, and let $\phi_2 = \phi_1$ elsewhere (that is, on the branch domains of $\phi_1$ with return time between $3$ and $N$ inclusive). 
\begin{lem} \label{lem:phi2}
    The branch domains of $\phi_2$ are mapped by $\phi_2$ onto $U(c)$, extensibly over $\hat U$, with extension domains contained in $U(c)$. 
\end{lem}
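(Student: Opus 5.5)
The plan is to treat separately the branches of $\phi_2$ that coincide with $\phi_1$ and the new branches created on $V_1^\pm$; by the symmetry $z\mapsto -z$ it suffices to work on $V_1^+$.

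\textbf{Branches with return time between $3$ and $N$.} Here $\phi_2=\phi_1=\phi_0$. Corollary~\ref{cor:extens} already says that every such branch maps univalently onto $U(c)$, is extensible over $\hat U$, and has extension domain contained in $U(c)$. The only exceptions listed there are $V_1^\pm$ and $V_N^\pm,V_{N+1}^\pm$, and the latter are excluded from $\phi_1$, which keeps only return times at most $N$. I will double-check that the return-time-$N$ domains excluded from $\phi_1$ are exactly the problematic ones; this is only a matter of matching the indexing conventions in the Corollary.

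\textbf{Branches on $V_1^+$.} A branch domain of $\phi_2$ in $V_1^+$ has the form $W=V_1^+\cap f_c^{-k}(W')$. Here $f_c^k|_W=\psi_1$ is a first entry of $V_1^+$ into $U\setminus V_1^+$, landing in a branch domain $W'$ of $\phi_1$ with $W'\neq V_1^\pm$. Since $W'$ lies outside $V_1^\pm$, we have $\phi_2|_W=\phi_1\circ f_c^k$ mapping $W$ onto $U$.

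For extensibility, take the extension domain $\hat W'\subset U$ of $W'$, given by the previous paragraph, with $\phi_1:\hat W'\to\hat U$ univalent. It then suffices to pull $\hat W'$ back univalently along the orbit $z,f_c(z),\dots,f_c^k(z)$ and to check that the resulting domain $\hat W$ is contained in $U$; in fact we will get $\hat W\subset V_1^+\cup\{\text{nearby}\}\subset U$.

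The first-entry orbit from $V_1^+$ is concrete. Points of $V_1^+$ near $q$ are mapped by $f_c$ near $q$ again, via the local dynamics at the repelling fixed point $q$, since $\phi_0=f_c^2$ on $V_1^+$ and $f_c^2$ fixes $q$. So iterating $\phi_0|_{V_1^+}$ means repeatedly applying the extended branch $V_1^+\to U$, whose extension lies in $\hat U$. The entry time $k$ is thus a multiple of $2$, with $\psi_1=(\phi_0|_{V_1^+})^{j}$, and
\[
\hat W=(\phi_0|_{V_1^+})^{-j}(\hat W').
\]
Because $\hat W'\subset U$, each pullback by the inverse branch of $\phi_0|_{V_1^+}$, which is defined on $\hat U\supset U$, lands in $V_1^+\subset U$. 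Univalence is automatic since these inverse branches are univalent on $\hat U$. Consequently $\hat W\subset V_1^+\subset U$, and $\phi_2:\hat W\to\hat U$ is univalent.

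\textbf{Main obstacle.} The delicate point is the claim that $\psi_1$ is an iterate of the single branch $\phi_0|_{V_1^+}$. Concretely, I must check that a point of $V_1^+$ which does not return to $U\setminus V_1^+$ at time $2$ returns to $V_1^+$ itself and not to $V_1^-$. If it can land in $V_1^-$, I would also allow the branch $\phi_0|_{V_1^-}$ in the composition; this is harmless, because both inverse branches map $\hat U$ into $V_1^\pm\subset U$, and the same pullback argument applies.

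I expect this to follow from the orientation of $f_c$ near $q$: $Df_c(q)\approx-2$, so $f_c$ swaps the two sides of the ray through $q$. That is why the return time on $V_1^\pm$ is $2$ rather than $1$. It also shows that $f_c^2$ maps $V_1^+$ to a neighbourhood of $q$ on the same side, which stays near $q$ and away from $-q$.
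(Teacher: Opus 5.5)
Your treatment of the return-time $3,\dots,N$ branches is correct; the indexing is as you guessed, since $V_k^\pm$ has return time $k+1$. The pullback step is also correct whenever the first entry lands in a domain $W'$ whose extension domain satisfies $\hat W'\subset U$. Writing $g$ for the branch of $f_c^{-2}$ fixing $q$ (the inverse of $\phi_0|_{V_1^+}$), one has $g^j(\hat W')\subset g^j(U)\subset V_1^+$.

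The gap is the branches that enter through $V_1^-$. Since $f_c^2$ maps $V_1^+$ onto all of $U$, the sets $g^j(V_1^-)\subset V_1^+$, $j\geq 1$, are nonempty branch domains of $\phi_2$ whose first entry into $U\setminus V_1^+$ lands in $V_1^-$. So your claim $W'\neq V_1^\pm$ is false. The heuristic that $f_c^2$ keeps $V_1^+$ near $q$ and away from $-q$ is false for the same reason. Your fallback, that both inverse branches map $\hat U$ into $V_1^\pm\subset U$, contradicts Corollary~\ref{cor:extens}. They map $\hat U$ onto the extension domains $\hat V_1^\pm$ of $V_1^\pm$, which are \emph{not} contained in $U$; if they were, $\phi_2$ would be unnecessary. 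So for $W'=V_1^-$ the hypothesis $\hat W'\subset U$ of your pullback argument fails, and nothing in the proposal shows $g^j(\hat V_1^-)\subset U$.

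The missing ingredient is exactly the observation the paper's proof rests on: $g(\hat V_1^-)\subset U$. The reason is geometric. The domain $V_1^+=g(U)$ meets $\partial U$ only along an arc of the ray through $q$. Hence $g(\hat U)=\hat V_1^+$ sticks out of $U$ only near that arc, i.e.\ only at images of points of $\hat U\setminus U$ lying near the ray through $q$. By contrast, $\hat V_1^-$ sticks out of $U$ only across the ray through $-q$. That region is sent by $g$ close to the curve $g(-\Gamma_\eps\cap\partial U)$, which passes through $g(-q)$ and lies in the interior of $U$; for $c=-2$, $g(-q)=-\sqrt{2-\sqrt3}$.

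Granting this, $g^j(\hat V_1^-)\subset g^{j-1}(U)\subset U$. After one inducing step, only the new boundary-adjacent branch $g(V_1^+)$ has an extension domain leaving $U$, and one keeps inducing on it, which is what your iteration does.

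Incidentally, the step you flag as the main obstacle is not one. Since $f_c(V_1^+)=-\zeta_c(U)$ is disjoint from $U$ and $f_c^2(V_1^+)=U$, $\psi_1$ is automatically an iterate of $\phi_0|_{V_1^+}$. Landing in $V_1^-$ is then simply a first entry.
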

\begin{proof}
    This follows from the corollary, noting that the pullback of the extension domain of $V_1^-$ by the branch of $f_c^{-2}$ which fixes $q$ lies in $U$, so only the boundary-adjacent branch of $\phi_0\circ f_c^2$ restricted to $V_1^+$ has an extension domain not in $U$. Then continue inducing on the new boundary-adjacent branch.
\end{proof}

Since $f_c^{2}$ is expanding on $V_1^+$, we control how many branch domains we create at each scale. 
\begin{lem}
    There is a constant $C>1$ such that, for all $n$, there are at most $C$ branch domains of $\phi_2$ with diameter in the range $[e^{-n-1}, e^{-n}]$. 
\end{lem}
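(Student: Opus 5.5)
The strategy is to split the branch domains of $\phi_2$ into two types. The first type consists of the branch domains of $\phi_1$ with return time between $3$ and $N$, on which $\phi_2=\phi_1$. The second type consists of the new domains created inside $V_1^\pm$ by the inducing $\phi_1\circ\psi_1$. For each type I will show that the number of domains with diameter in $[e^{-n-1},e^{-n}]$ is bounded by a constant, and I will use symmetry ($z\mapsto -z$) to restrict attention to $V_1^+$.

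\emph{Domains with return time $k\le N$.} By Corollary~\ref{cor:extens}, for each $k\in\{3,\dots,N\}$ there are precisely two branch domains with return time $k$. Each is mapped by $f_c^k$ univalently onto $U(c)$, extensibly over $\hat U$, so Koebe distortion gives
$$\diam \sim |Df_c^k|^{-1}$$
at any point of the domain. The first iterate goes from near $0$ to a neighbourhood of $-2$, and the remaining orbit follows the chain $-\zeta^{k-2}(U)$ towards $p$. By \eqref{eqn:zetadist}, $|Df_c^{k-1}|\sim|2p|^{k}\sim 4^k$ along this orbit. The first factor $|Df_c(z)|=2|z|$ is comparable to $|f_c(z)+p|^{1/2}$, and $|f_c(z)+p|\sim 4^{-k}$; this uses $k\le N$, so that $-\zeta^{k-2}(U)$ stays at distance $\grtsim x$ from $c$ (Lemma~\ref{lem:cleanbranches}), which keeps the square-root comparison valid. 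Altogether
$$\diam \sim 4^{-k}\cdot 4^{k/2}=2^{-k},$$
uniformly in $c$. Only boundedly many $k$ therefore give diameters in a given range $[e^{-n-1},e^{-n}]$, and each such $k$ contributes two domains.

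\emph{Domains created in $V_1^+$.} The return time $2$ to $V_1^+$ is realised by the branch of $f_c^{-2}$ fixing $q$. This branch contracts by the factor $|Df_c^2(q)|^{-1}\approx 1/4$, uniformly in $c$, and in any case by less than $1/2$ by the expansion noted after Corollary~\ref{cor:extens}. The entry map $\psi_1$ therefore decomposes $V_1^+$ into levels $j\ge 0$. On level $j$ the orbit stays $j$ times in $V_1^+$ under $f_c^2$ and then lands in $U\setminus V_1^+$, where a branch of $\phi_1$ is applied. The level-$j$ domains are images of the domains of $\phi_1$ (together with the remaining part of $U\setminus V_1^+$) under the inverse branch $g_j$ of $f_c^{2j}\circ f_c^2$. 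By Lemma~\ref{lem:phi2} these pullbacks have bounded distortion, so
$$\diam(g_j(W))\sim \lambda^{j}\,\diam(W),\qquad \lambda=|Df_c^2(q)|^{-1}\approx 1/4,$$
for each $\phi_1$-domain $W$. The number of $\phi_2$-domains in $V_1^+$ at scale $e^{-n}$ is then at most
$$\sum_{j}\#\{\phi_1\text{-domains at scale } e^{-n}\lambda^{-j}\}\lesssim \sum_{j\,:\,\lambda^j\grtsim e^{-n}} C.$$
This sum grows linearly in $n$, which only bounds the count by $O(n)$ rather than by a constant.

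\emph{Main obstacle.} The difficulty is to remove this factor $n$. A fixed $\phi_1$-domain $W$ at scale $e^{-m}$ has pullbacks at scales $e^{-m}\lambda^{j}$ for all $j$, so a target scale $e^{-n}$ receives a contribution from every $m\le n$ (with $j\approx (n-m)/\log 4$). Summing the per-scale bound $C$ over these $m$ again gives $O(n)$. To fix this, I would go back to the explicit geometry of the $\phi_1$-domains inside $U\setminus V_1^+$ rather than using only their per-scale count. The first place to look is the domain with return time $3$ adjacent to $V_1^+$, together with the domains accumulating near $q$. I would check whether the first entry map $\psi_1$ actually lands in a bounded region of $U\setminus V_1^+$, namely the level of $\phi_1$-domains adjacent to $V_1^+$. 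If it does, each level $j$ contributes only boundedly many domains at each scale, and the bounded-distortion estimate above then yields the uniform constant $C$ as claimed. I expect this combinatorial finiteness of the landing region to be the main step requiring care.
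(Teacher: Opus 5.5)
\textbf{The step meant to remove the factor $n$ cannot be carried out.} Your decomposition and both estimates are correct. The $\phi_1$-domains $W_k^\pm$ with return time $3\le k\le N$ have diameter $\sim 2^{-k}$ uniformly in $c$, and inside $V_1^+$ you correctly reach a bound of order $n$.

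Your proposed fix fails because $\psi_1$ does not land in a bounded part of $U\setminus V_1^+$. The map $f_c^2$ sends $V_1^+$ univalently onto all of $U(c)$. So if $g$ is the inverse branch of $f_c^2$ fixing $q$, then for every $j\ge 1$ the branch domains of $\phi_2$ in $V_1^+$ include $g^j(W)$ for every $\phi_1$-domain $W\neq V_1^+$. In particular they include $g^j(W_k^\pm)$ for all $3\le k\le N$.

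Since $g(\hat U)\subset \hat U$, the maps $g^j$ have uniformly bounded distortion on $U$, so $\diam(g^j(W_k^\pm))\sim \lambda^j 2^{-k}$ with your $\lambda\approx 1/4$. Fix $m\le N(c)$. For each of the roughly $m/2$ levels $j$ with $\lambda^j\ge 2^{3-m}$, some $k\in[3,N]$ gives $\diam(g^j(W_k^+))$ within a fixed factor of $2^{-m}$. By pigeonhole, some range $[e^{-n-1},e^{-n}]$ with $n\approx m\log 2$ then contains at least $m/C'$ branch domains of $\phi_2$, where $C'$ is independent of $c$ and $m$. At $c=-2$ this is exact: in the coordinate $x=2\cos\theta$, the domain $g^j(W_k^\pm)$ has $\theta$-length $\frac{\pi}{3}2^{-(2j+k)}$.

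Because $N(c)\to\infty$ as $c\to -2$, no constant $C$ independent of $c$ can satisfy the statement. Your $O(n)$ bound is sharp: the true count at the relevant scales is of order $\min(n,N(c))$. The paper gives no proof beyond the remark that $f_c^2$ is expanding on $V_1^+$. Expansion only bounds how many levels $j$ reach a given scale, and that is exactly where your factor $n$ comes from.

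\textbf{What to state instead.} Stop at what your argument actually proves. For each $n$ there are at most $C(n+1)$ branch domains of $\phi_2$ with diameter in $[e^{-n-1},e^{-n}]$, with $C$ uniform in $c$. If $C$ is allowed to depend on $c$, your argument gives $C\sim N(c)$, but that is useless for uniform tails.

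This corrected statement is enough for the rest of the paper:
\begin{itemize}
\item In Lemma~\ref{lem:startails}, it adds one power of $n$ to the per-scale count for $\phi_\infty$, and hence to the count for $\phi_*$. A polynomial loss is still absorbed by $e^{-n/2}$, so uniform exponential tails survive.
\item The per-scale bound in Proposition~\ref{prop:linkage} counts gaps, not branch domains. There is only one gap per level $j$, namely the pullback by $g^j$ of the central gap of diameter of order $|\eps|^{1/2}$, so that bound is unaffected.
\end{itemize}
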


\subsection{Postcritical filling}\label{sec:pcf}
Let $D_k$ denote the collection of branch domains of $\phi_k$, for $k=1,2$.  
    If $W$ is mapped biholomorphically by the $n_W$th iterate of $f_{c}$ to $U = U(c)$, we can define $W(D_k)$ to be the collection of branch domains of $\phi_k \circ f^{n_W}$ contained in $W$.

    We proceed with an iterative construction of $Z_{j+1}$, starting from $j=1$, where we set $Z_1 = D_1$, $Z_0 = \emptyset$.
    There is at most one element (branch domain)  $W \in Z_j$ which contains $\phi_0(0)$. 
     If it exists, it is mapped biholomorphically by $\phi_1^j$ to $U$.
    In $Z_j$, replace $W$ with the elements of $W(D_1)$. 
    There are at most two domains in $Z_j$ not containing $\phi_0(0)$ but   whose extension domains contain $\phi_0(0)$.   For each such domain $W'$, 
    in $Z_j$, replace $W'$ with the elements of $W'(D_2)$. 

    By construction of $\phi_2$, the extension domains of elements of $W'(D_2)$ are subsets of $W'$ and hence do not contain $\phi_0(0)$. There are at most two elements of $W(D_1)$ whose extension domains contain $\phi_0(0)$. 
Note that the diameters of these two elements are $\lesssim~2^{-j}$ as $|D\phi_1| > 2$.

    In the limit, we obtain $Z_\infty$, a collection of domains each mapped by an iterate of $\phi_1$ biholomorphically to $U(c)$ and extensible over $\hat U$, with extension domain not containing $\phi_0(0)$. 
    Denote the corresponding map by $\phi_\infty$. 

    Let 
    \begin{equation}\label{eqn:Vstar}
        V_* = V_N^+\cup V_N^- 
        \cup
         V_{N+1}^+\cup V_{N+1}^-
    \end{equation}
and
    define $\phi_*$ on $V_*$ by 
    $$
    \phi_* = \phi_\infty \circ \phi_0.$$

    \begin{lem} \label{lem:startails}
        The map $\phi_*$ has the following properties:
        \begin{itemize}
            \item
                Each branch domain is mapped onto $U(c)$ and is extensible over $\hat U$;
            \item
                $\phi_*$ has uniform exponential tails. 
        \end{itemize}
    \end{lem}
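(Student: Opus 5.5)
The first property follows from the construction. A branch domain $W$ of $\phi_*$ is a component of $V_*$ intersected with $\phi_0^{-1}(W')$, where $W'\in Z_\infty$. The map $\phi_0$ sends each component of $V_*$ onto $U(c)$, with degree two on the central one if it contains $0$. The map $\phi_\infty$ sends $W'$ biholomorphically onto $U(c)$, and its inverse branch extends univalently over $\hat U$ with image, the extension domain $\hat W'$, not containing $\phi_0(0)$. Since $\hat W'\subset U$ avoids the critical value $\phi_0(0)$ of $\phi_0|_{V_*}$, and $\hat W'$ is simply connected, every inverse branch of $\phi_0$ restricted to $\hat W'$ is univalent. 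So $W$ is mapped biholomorphically onto $U(c)$. The composition of the two inverse branches is univalent on $\hat U$, and its image is contained in $\phi_0^{-1}(\hat W')\subset V_*\subset U\subset\hat U$. This gives extensibility over $\hat U$.

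For uniform exponential tails I would factor $\phi_*$ and apply Lemma~\ref{lemTailsComp} twice. First I would show that $\phi_1$ and $\phi_2$ have uniform exponential tails. Their branch domains are branch domains of the first return map to $U$ with return time at most $N$, plus the pieces created on $V_1^\pm$. By Lemma~\ref{lem:expandingreturns}, a branch with return time $k$ has derivative at least $2^{k/2}$. Since there are two branches for each return time, and at most $C$ branches of $\phi_2$ per dyadic scale, the sum of diameters below $e^{-s}$ is $\lesssim e^{-s}$, uniformly in $c$.

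Next I would show that $\phi_\infty$ has uniform exponential tails. The postcritical filling replaces, at step $j$, at most one domain $W$ by $W(D_1)$ and at most two domains by $W'(D_2)$. All of these have diameter $\lesssim 2^{-j}$, since $|D\phi_1|>2$ and they are $j$-fold pullbacks. Each replacement is a rescaled copy of a family with exponential tails, rescaled by bounded distortion coming from Koebe space over $\hat U$. Summing these copies over $j$, in the manner of the proof of Lemma~\ref{lemTailsComp}, gives exponential tails for $Z_\infty$ with constants independent of $c$.

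Finally, I would write $\phi_*=\phi_\infty\circ\phi_0|_{V_*}$ and treat each component of $V_*$ separately. The non-central components are univalent with bounded distortion, so Lemma~\ref{lemTailsComp} applies directly. The central, degree-two branch needs a separate estimate, and I expect it to be the main obstacle. Near $0$, a preimage of a set of diameter $d$ at distance $r$ from the critical value $\phi_0(0)$ has diameter roughly $d/\sqrt{r}$ times the scale factor of the central branch, and this could destroy summability if tiny domains of $Z_\infty$ accumulated at $\phi_0(0)$. The filling construction is designed to prevent this: every element of $Z_\infty$ has an extension domain omitting $\phi_0(0)$, with definite Koebe space, so its distance to $\phi_0(0)$ is $\gtrsim$ its diameter. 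A domain of diameter $d$ therefore pulls back to a domain of diameter $\lesssim\sqrt{d}$ times the scale of $V_*$. I would then bound the number of elements of $Z_\infty$ at each scale near $\phi_0(0)$ by a bounded count per generation $j$; they arise only from the at most three replaced domains per step. This gives a tail sum like $\sum_j 2^{-j/2}$ after the square-root loss, which is still exponential, uniformly in $c$. The square-root loss only halves the tail exponent, which is why the pullbacks near the critical value must be counted generation by generation rather than bounded merely by total area.
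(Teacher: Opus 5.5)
You handle the first bullet and the tails of $\phi_1$, $\phi_2$, $\phi_\infty$ in the same way as the paper. The final step through $\phi_0|_{V_*}$, however, has a genuine gap.

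First, the non-central components of $V_*$ are not mapped with uniformly bounded distortion, so Lemma~\ref{lemTailsComp} cannot be applied to them. Corollary~\ref{cor:extens} says the extension domains of all of $V_N^\pm, V_{N+1}^\pm$ may contain $0$. For instance, suppose $c$ is real, lies in $-\zeta^N(U)$, and is very close to $-\zeta^{N+1}(U)$. Then the two components of $f_c^{-1}(-\zeta^{N+1}(U))$ have size of order $|\eps|^{1/2}$ but reach arbitrarily close to $0$, where $|Df_c(z)|=2|z|$ degenerates. These branches are as near-critical as the central one.

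Second, the count you propose for the central branch does not work as stated. A generation does not contribute a bounded number of elements. It contributes a rescaled copy of $D_1$, which has about $2N\sim\log(1/|\eps|)$ elements (unbounded in $c$), and up to two rescaled copies of $D_2$, which are infinite. Only the number per generation \emph{per dyadic scale} is bounded. A sum such as $\sum_j 2^{-j/2}$ bounds the total mass from late generations. It does not control the small pullbacks produced in early generations, and that is what a tail estimate requires. Nor is it justified that a square-root loss merely halves a tail exponent. Your bound $\diam\le C\sqrt{d|\eps|}$ for the pullback of an element of diameter $d$ is only useful once you control how many elements of $Z_\infty$ exist at each scale. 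Exponential tails alone allow up to a constant times $e^{(1-\kappa)n}$ of them at scale $e^{-n}$.

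The missing ingredient, which is how the paper argues, is a polynomial \emph{count*} combined with your square-root inequality applied on every component of $V_*$. Elements created at step $j$ of the filling have diameter at most $C2^{-j}$, and each step adds a bounded number of elements per dyadic scale. So $Z_\infty$ has at most $Cn$ elements with diameter in $[e^{-n-1},e^{-n}]$, and at most $Cn^2$ with diameter at least $e^{-n}$.

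On any component $V$ of $V_*$, $\phi_0=f_c^{k-1}\circ f_c$ with $k\in\{N+1,N+2\}$. Here $f_c^{k-1}$ has bounded distortion and derivative of modulus comparable to $|\eps|^{-1}$ on $f_c(V)$. Also, $z\mapsto z^2$ maps any connected set of diameter $\delta$ onto one of diameter at least $\delta^2/16$. Hence a branch domain of $\phi_*$ of diameter about $e^{-m}$ is mapped onto an element of $Z_\infty$ of diameter at least a constant times $e^{-2m}/|\eps|>e^{-2m}$. There are at most $Cm^2$ such elements, each with at most $8$ preimages in $V_*$. So the branch domains at scale $e^{-m}$ have total diameter at most $Cm^2e^{-m}<e^{-m/2}$ for large $m$, uniformly in $c$.

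No Koebe space around $\phi_0(0)$ is needed for this; it only enters the univalence in the first bullet. There, incidentally, $\hat W'\subset U$ need not hold. The domains $V_1^\pm$ can survive into $Z_\infty$ with extension domains leaving $U$, so you have to pull back through the extensions of the branches of $\phi_0$ over $\hat U$. The paper simply says this holds by construction.
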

    \begin{proof}
        Extensibility is by construction. 

        Iterates of $\phi_1$ have uniformly bounded distortion, by extensibility. At the $j$th step in the inductive construction of $Z_\infty$, the elements whose extension domains contained $\phi_0(0)$ had diameter $\lesssim~2^{-j}$. Consequently, there are no new elements of scale in the range $[e^{-n-1}, e^{-n}]$ after $\sim n$ induction steps. 
        Hence there are at most $\sim~ n$ domains in that range. Therefore, as triangular numbers grow quadratically, there are at most $\sim n^2$ elements of diameter greater than $e^{-n}$.  

        On a connected component $W$ of  $V_*$, $\phi_0 = f_c^k$ for $k = N+1$ or $N+2$. 
        On $f_c(W)$, $f^{k-1}_c$ has bounded distortion and derivative in absolute value comparable with $|1/\eps|$. 
        A branch domain of $\phi_*$ of diameter close to $e^{-m}$ is mapped by $\phi_0$ to a domain of $\phi_\infty$ of diameter at least $\sim ~ e^{-2m}/|\eps| > e^{-2m}$. There are at most $\sim ~ m^2$ of those. 
        
        As $m^2e^{-m} < e^{-m/2}$ for large $m$, this shows exponential tails. 
    \end{proof}

    \subsection{The final induction step} \label{sec:definevp}
    Recall $F_M$ is the first entry map to $U_M = U_M(c)$, with $U_M$, $q_M$ defined in \S\ref{secumqm}. 
    We define $\vp = \vp_\eps$ by
    $$
    \vp = F_M \circ \phi_* \text{ on $V_*$}
    $$
    and 
    $$
    \vp = F_M \circ \phi_0 \text{ elsewhere}.
    $$

    \begin{prop} \label{prop:CantorGoal}
The family of maps $\vp$ has the following properties:
         \begin{itemize}
            \item
                Each branch domain is mapped onto $U_M(c)$ and is extensible over $U(c)$;
            \item
                the family of maps $\vp$ has uniform exponential tails. 
        \end{itemize}
    \end{prop}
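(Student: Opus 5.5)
We prove the two properties separately, treating $\vp$ as a composition of an inner map ($\phi_*$ on $V_*$, $\phi_0$ elsewhere) followed by the first entry map $F_M$.

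\emph{Mapping onto $U_M(c)$ and extensibility.} Each branch of $F_M$ maps a component $W'$ of its domain biholomorphically onto $U_M(c)$. Since $U_M(c)$ is nice (Lemma~\ref{lemUareNice}) and $f_c^j(\partial U_M(c))\cap U(c)=\emptyset$ for $j\ge1$ (\S\ref{secumqm}), the corresponding inverse branch extends univalently over $U(c)$. Its range is the component $\hat W'$ of the preimage of $U(c)$ containing $W'$, and $\hat W'\subset U(c)$ by niceness of $U(c)$.

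Now take an inner branch domain $W$ mapped onto $U(c)$. On $V_*$ this is a branch of $\phi_*$, extensible over $\hat U$ by Lemma~\ref{lem:startails}. Away from $V_*$ it is a branch of $\phi_0$, extensible over $\hat U$ by Corollary~\ref{cor:extens}. The exceptional domains $V_N^\pm,V_{N+1}^\pm$ are exactly what $V_*$ removes. The boundary-adjacent $V_1^\pm$ need care: their $\phi_0$-extension domains leave $U$ but stay in $\hat U$, and this suffices here, because we only need extension over $U(c)\subset\hat U$.

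Composing, the inverse branch of $\vp$ is the inner inverse branch applied to the $F_M$ inverse branch. The latter sends $U(c)$ univalently into $U(c)$, and the inner inverse branch is univalent on $\hat U\supset U(c)$. So every branch of $\vp$ maps onto $U_M(c)$ and is extensible over $U(c)$. One check remains: the range of the composite extension lies in $U(c)$. For $V_1^\pm$ this holds because we only pull back $U(c)$, not $\hat U$, and the pullback of $U(c)$ is the branch domain of $\phi_0$ itself, which lies in $U(c)$.

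\emph{Uniform exponential tails.} We apply Lemma~\ref{lemTailsComp} with outer map $F_\eps=F_M$, whose uniform exponential tails come from Lemma~\ref{lemFmTails}. The inner map is $\vp_\eps=\phi_*$ on $V_*$ together with $\phi_0$ on the remaining domains. Its hypotheses hold as follows.
\begin{itemize}
\item All inner branches map onto the common disc $V_\eps=U(c)$, which contains the domain of $F_M$ restricted to $U(c)$.
\item Inner distortion is uniformly bounded, by Koebe on $\hat U\setminus\overline{U(c)}$, whose modulus is uniform in $c$.
\item The inner map has uniform exponential tails. On $V_*$ this is Lemma~\ref{lem:startails}. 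Off $V_*$, for return time $k\notin\{1,N+1,N+2\}$ there are exactly two branch domains (Corollary~\ref{cor:extens}), and by Lemma~\ref{lem:expandingreturns} with bounded distortion their diameters are $\lesssim 2^{-k/2}$. So the number of such domains of diameter about $e^{-s}$ is $O(1)$, and their diameter sum is $\lesssim e^{-s}$. The two domains $V_1^\pm$ have diameter bounded below and contribute only at bounded scales.
\end{itemize}
Lemma~\ref{lemTailsComp} then gives uniform exponential tails for $\vp$.

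\emph{Main obstacle.} The delicate step is uniformity in $\eps$ of the tail constants for the $\phi_0$ part. The domains with return time $k<N$ must obey the bound $2^{-k/2}$ uniformly even as $N\to\infty$. Lemma~\ref{lem:expandingreturns} applies only to points outside $B(0,\eps^{2/3})$. The only branches meeting that ball are those with return time $N+1$ or $N+2$, namely $V_*$, and these were removed into $\phi_*$, whose tails Lemma~\ref{lem:startails} already controls uniformly. So the plan is to verify that every non-$V_*$ branch domain avoids $B(0,|\eps|^{2/3})$, using Lemmas~\ref{lem:cleanbranches} and~\ref{lem:vwdist}, and then invoke the derivative bound.
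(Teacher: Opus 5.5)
Your proposal is correct and follows the paper's own argument: extensibility over $U(c)$ comes from composing the inverse branches of $F_M$, which send $U(c)$ into $U(c)$, with the inner inverse branches defined on $\hat U$, and uniform exponential tails come from Lemma~\ref{lemTailsComp} applied with Lemmas~\ref{lemFmTails} and~\ref{lem:startails}. You are more explicit than the paper about the $\phi_0$ branches off $V_*$, and there the upper bound $\diam(W) \lesssim 2^{-k/2}$ by itself gives only $O(s)$ domains at scale $e^{-s}$, not $O(1)$; the resulting tail sum $\lesssim (s+1)e^{-s}$ is still exponentially small, so your conclusion stands.
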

    \begin{proof}
        Domains of $F_M$ are mapped univalently onto $U_M$ and are extensible over $U$ 
        with extension domains contained in $U$. 
        Domains of $\phi_*$ and domains of $\phi_0$ disjoint from the domain of $\phi_*$ are mapped univalently onto $U$, extensibly over $\hat U$ and hence with uniformly bounded distortion. 
        The topological statement follows. 

        As these (families of) maps all have uniform tails (Lemma~\ref{lem:startails} and  Lemma~\ref{lemFmTails}), so does their composition, by Lemma~\ref{lemTailsComp}. 
    \end{proof}

    \subsection{Some geometric estimates}

    If $W$ is a branch domain of $F_M\circ \phi_*$ mapped by $f_c^{n_W}$ to $U_M$, there are  unique points $v_W^\pm \in \partial W$ mapped by $f_c^{n_W}$ to $\pm q_M(c) \in \partial U_M$. 

    \begin{lem}\label{lem:branchboundary}
        Let $\theta \in (0, 1/12)$. 
        If $c+2 = \eps = x+iy$ with $0 \leq |y| < x^{1+5\theta}$,
        $$\dist{f_c(v_W^+), \R}+ \dist{f_c(v_W^-), \R} \leq  x^{1+4\theta}.$$ 
    \end{lem}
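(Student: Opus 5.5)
Compare $c$ with the real parameter $c_0=-2+x$. At $c_0$ every branch domain is symmetric about $\R$, and $f_{c_0}(v_W^\pm)$ is real. The plan is to follow the points $v_W^\pm$ along a holomorphic motion from $c_0$ to $c$ and bound how far they can leave the real axis. The time needed is $|c-c_0|=|y|<x^{1+5\theta}$, so it suffices to show that the motion speed of $f_c(v_W^\pm)$ is at most of order $x^{-\theta}$. That speed, multiplied by $x^{1+5\theta}$, then gives a displacement of at most $x^{1+4\theta}$.

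\textbf{Step 1: identify the orbits.} Write $f_c^{n_W}(v_W^\pm)=\pm q_M(c)$. The orbit of $v_W$ consists of three pieces. First there are the iterates of $\phi_*$, that is, the branch of $\phi_0$ with return time $N+1$ or $N+2$ followed by the $\phi_\infty$ pieces. Then comes the first entry $F_M$. Finally the orbit lands on $q_M$ and stays on the eventually fixed orbit of $q$. On the pieces coming from $\phi_\infty$ and from $F_M$, the orbit avoids $B(0,|\eps|^{1/2+\theta'})$ for a suitable $\theta'$. For $\phi_\infty$ this holds because its extension domains avoid $\phi_0(0)$, hence stay a definite distance from the critical value after pulling back. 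The only close approach to $0$ is the first point $v_W$ itself, which lies in $V_*$, at distance $\sim\sqrt{|\eps|}$ from $0$ by Lemma~\ref{lem:vwdist}.

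\textbf{Step 2: apply Corollary~\ref{cor:holospeed}.} Apply Corollary~\ref{cor:holospeed} with $\theta$ replaced by a smaller exponent $\theta_1$, chosen so that $r=x^{1+4\theta_1}$ exceeds $|y|$ and the derivative bound is $x^{-1/2-2\theta_1}$. This gives a holomorphic motion of the forward orbit of $f_{c_0}(v_W)$; that orbit stays outside $B(0,x^{1/2+\theta_1})$, and the motion is defined on a ball containing $c$. The statement is about $f_c(v_W)$ rather than $v_W$ itself. That is the gain: the point $f_c(v_W)$ sits near $-p$, close to the critical value, and its subsequent orbit passes through $p$ before returning.

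Rather than use the crude bound $x^{-1/2-2\theta_1}$, estimate $T_\infty(f_c(v_W))$ directly via the decomposition
\[
T_\infty = T_n + (Df^n)^{-1}T_\infty\circ f^n .
\]
Here $T_n$ is the contribution of the passage from near $-p$ to the first return to $B(0,1.1)$. By Lemma~\ref{lem:thirdquarter} it is close to $1/3$ in size, and by \eqref{eqngenspeed} it is combined with the $-1/3$ motion of $p$. The remaining term carries a factor $1/|Df^n|\sim|\eps|$ from passing near $p$. It multiplies $T_\infty$ at later points, which by Lemma~\ref{lemSplitreturns} is at most $x^{-1/2-\theta_1}$. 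The tail term is therefore $O(x^{1/2-\theta_1})$, so the speed of $f_c(v_W)$ is $O(1)$.

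\textbf{Step 3: integrate the speed.} Integrating the $O(1)$ speed from $c_0$ to $c$ gives
\[
\dist{f_c(v_W^\pm),\R}\lesssim |y|<x^{1+5\theta},
\]
which is smaller than $x^{1+4\theta}/2$ for small $x$. The points $v_W^\pm$ are defined through the preimage of $q_M(c)$, whose motion is bounded by Lemma~\ref{lemKMspeed}, so this is consistent with Step 2.

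\textbf{Main obstacle.} The hardest part is justifying that the orbit after $f_c(v_W)$ stays outside the disc $B(0,x^{1/2+\theta_1})$ required by Corollary~\ref{cor:holospeed}, uniformly over all branches. This rests on the postcritical filling construction. I would argue it by noting that $\phi_\infty$ pieces return to $U$ at points whose distance to $0$ is bounded below by the square root of the distance of $\phi_0(0)$ to their image. That distance is $\gtrsim x^{1+2\theta_1}$ by Lemma~\ref{lem:cleanbranches}-type separation, which gives the required lower bound.
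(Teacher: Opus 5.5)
Your scheme is essentially the paper's. You put the forward orbit of $f_c(v_W)$ into the set $K$ of Corollary~\ref{cor:holospeed}. You sharpen the speed of $f_c(v_W)$ itself to at most $1/2$ through \eqref{eqngenspeed}: the passage near $p$ gives $|Df_c^N|\sim x^{-1}$, and $|T_N|\approx 1/3$ by Lemma~\ref{lem:thirdquarter}. Then you integrate across the distance $|y|$ separating $c$ from $c_0=-2+x$. Two steps, however, do not hold up as written.

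\textbf{Avoidance of $0$.} The reason you give in Step~1, that extension domains of $\phi_\infty$ avoid $\phi_0(0)$, does not keep the orbit away from $0$. That property serves only to make the pullback of $\phi_\infty$ by the degree-two branch $\phi_0$ on $V_*$ univalent, i.e.\ extensibility of $\phi_*$. Your final paragraph also confuses $\phi_0(0)$ with the critical value $c=f_c(0)$.

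The actual reason is combinatorial, and it is what the paper uses. By construction $f_c^j(W)\cap V_*=\emptyset$ for $1\le j<n_W$. This is because $\phi_\infty$ is made of iterates of $\phi_1$ (return times at most $N$), and $F_M$ is a first entry into $U_M\supset V_*$. A point $z\in U$ returning with time $r\notin\{N+1,N+2\}$ has $f_c(z)\in-\zeta^{r-1}(U)$ with $r-1\notin\{N,N+1\}$. Lemma~\ref{lem:cleanbranches} then gives $|z|^2=|f_c(z)-c|>\delta x$. So the orbit of $f_c(v_W)$ avoids $B(0,\sqrt{\delta x})$, and no auxiliary exponent $\theta_1$ is needed.

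\textbf{Matching $W$ across parameters.} You take a branch domain at the real parameter $c_0$, where $f_{c_0}(v_W)$ is real, and move it to $c$. But $\vp$ at $c$ is not the continuation of $\vp$ at $c_0$. The postcritical filling of \S\ref{sec:pcf} refines domains according to the position of $\phi_0(0)$, which moves with the parameter at speed of order $x^{-1}$. So the collections $Z_\infty$ differ at deep levels, and a given $W$ at $c$ need not be the continuation of any branch domain at $c_0$.

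The paper runs the motion in the other direction. It centres the holomorphic motion at $c$, notes that $c_0\in B(c,|\eps|^{1+4\theta})$, and checks directly that $h(c_0,f_c(v_W))$ is real. This holds because each piece of the itinerary of $f_c(v_W)$ continues at $c_0$ to a branch on a domain symmetric about $\R$ that commutes with complex conjugation. The pieces are the branch $f_c^{k}:-\zeta^{k}(U)\to U$ with $k\in\{N,N+1\}$, followed by branches of $\phi_1$ and of $F_M$, whose returns to $U$ all have return time at most $N$. Pulling back the real points $\pm q_M(c_0)$ along these branches therefore yields a real point.

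Avoiding $B(0,\sqrt{\delta x})$ alone would not give this: branches with return time above $N+2$ also avoid that disc but lie along the imaginary axis at $c_0$. For the same reason, not every branch domain at $c_0$ is symmetric about $\R$, although $f_{c_0}(v_W^\pm)$ is indeed real.

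With these repairs your argument is the paper's. Integrating the speed bound $1/2$ over the segment of length $|y|$ then gives $\dist{f_c(v_W^\pm),\R}\le|y|/2$, and hence the stated bound with $x^{1+4\theta}$ directly.
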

    \begin{proof}
        Let $v_W$ denote one of $v_W^\pm$. 
        By construction, $f_c^j(W) \cap V_* = \emptyset$ for $j=1, \ldots, n_W-1$ so the orbit of $f_c(v_W)$ avoids, by Lemma~\ref{lem:cleanbranches},  $B(0, \sqrt{\delta x})$ for  some independent $\delta>0$.
        
        The orbit of $f_c(v_W)$ is a finite repeller, contained in the set $K$ of 
         Corollary~\ref{cor:holospeed}, with a corresponding holomorphic motion $h$ satisfying 
         $$
         |\partial_1 h| \leq |\eps|^{-1/2 -2\theta}$$
            on $B(c, |\eps|^{1+4\theta})$. 
         This give us an estimate for $\rho'$ where $\rho(c) = f_c^{N+1}(v_W)$. 
         We have that $|Df_c^N(f_c(v_W))| ~\sim~ x^{-1}$. 
         By Lemma~\ref{lem:thirdquarter} and~\eqref{eqngenspeed}, we obtain 
         $$
         |\partial_1 h(c', f_c(v_W))| \leq 1/2.$$
          Therefore, for $c' \in B(c, |\eps|^{1+4\theta})$, 
         $$
         \dist{h(c',f_c(v_W)), f_c(v_W)} \leq |\eps|^{1+4\theta}/2.$$

         It remains to check that if $c' = -2+x$, $h(c',f_c(v_W)) \in \R$. This follows from the construction of $\phi_\infty$, which uses iterates of $\phi_1$. 
    \end{proof}
When $\eps$ is real, the branch domains of $\vp$ lie along the real and imaginary axes. We need the following lemma, for the complex case, to show that there are branch domains close to $0$. 
    \begin{coro}\label{cor:bothsides}
        If $f_c(v_W^\pm) -c$ are on alternate sides of the imaginary axis, then 
        $$v_W^\pm \in B(0, 2 x^{1/2+2\theta}).$$ 
    \end{coro}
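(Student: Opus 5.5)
Since $f_c(v_W^\pm) = (v_W^\pm)^2 + c$, we have $(v_W^\pm)^2 = f_c(v_W^\pm) - c$. So the plan is to show that each number $w^\pm := f_c(v_W^\pm) - c$ is small. The hypothesis forces $\Re(w^+)$ and $\Re(w^-)$ to have opposite signs (or one to vanish), so the aim is to bound $|w^\pm|$ by roughly $4x^{1+4\theta}$. Taking square roots then gives $|v_W^\pm| \le 2x^{1/2+2\theta}$.

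\textbf{Step 1 (small imaginary parts).} By Lemma~\ref{lem:branchboundary}, which applies under the standing cusp hypothesis $|y|<x^{1+5\theta}$, both $f_c(v_W^\pm)$ lie within $x^{1+4\theta}$ of $\R$. Since $|\Im c| = |y| < x^{1+5\theta}\ll x^{1+4\theta}$, this gives $|\Im w^\pm| \le x^{1+4\theta}(1+o(1))$.

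\textbf{Step 2 (small real parts).} The points $v_W^+$ and $v_W^-$ lie on the boundary of the single connected branch domain $W$. They are joined by an arc of $\partial W$, or by a curve through $W$, whose image under $f_c$ lies in $f_c(W)$. I would argue that this image is a thin set near the real axis, connecting $f_c(v_W^+)$ to $f_c(v_W^-)$ near $c$. On the side of the imaginary axis where $\Re w>0$, the real-parameter picture together with the holomorphic-motion estimate of Lemma~\ref{lem:branchboundary} shows that the relevant preimages of $\pm q_M$ sit within $O(x^{1+4\theta})$ of the real continuation points.

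The key point is an ordering argument in the real case $c'=-2+x$. The corresponding points $f_{c'}(v_W^\pm)$ are real. Because $W$ is a domain of $F_M\circ\phi_*$ avoiding the near-critical pieces (Lemma~\ref{lem:cleanbranches}), both points lie on the same side of $c'$: either both are to the right of $c'$ (so $W$ is horizontal) or both coincide with the fold (so $W$ is vertical). Equivalently, the real parts of $f_{c'}(v_W^\pm)-c'$ share a sign, unless both are $O(x^{1+4\theta})$-close to $0$. Transporting to $c$ via the motion, with displacement at most $|\eps|^{1+4\theta}/2$ as in the proof of Lemma~\ref{lem:branchboundary}, the sign of $\Re w^\pm$ can only differ if $|\Re w^\pm|\lesssim x^{1+4\theta}$. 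Under the hypothesis of alternate sides, we therefore get $|\Re w^\pm| \le 2x^{1+4\theta}$, say.

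\textbf{Step 3 (square root).} We now have $|w^\pm|\le 4x^{1+4\theta}$, hence $|v_W^\pm| = |w^\pm|^{1/2} \le 2x^{1/2+2\theta}$.

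The main obstacle is Step 2. It requires justifying that, in the real picture, $f_{c'}(v_W^+)-c'$ and $f_{c'}(v_W^-)-c'$ cannot have real parts of opposite sign and definite size. It also requires checking that the motion $h$ from Corollary~\ref{cor:holospeed} moves these specific points continuously from $c'$ to $c$ within the ball $B(c,|\eps|^{1+4\theta})$. The latter holds since $|c-c'|=|y|<x^{1+5\theta}$. I would handle the first point by the symmetry $z\mapsto -z$ of $f_{c'}$: for a real parameter, $v_W^+$ and $v_W^-$ are either both real or are complex conjugates, so $f_{c'}(v_W^\pm)$ coincide or are real, and the sign claim follows.
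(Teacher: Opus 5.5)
Your Steps~1 and~3 match the paper, but Step~2 has a genuine gap: the transport argument controls only the point that changes side. Write $w^\pm=f_c(v_W^\pm)-c$ and $a^\pm=h(c',f_c(v_W^\pm))-c'\in\R$. Then $|\Re w^\pm-a^\pm|\le|\eps|^{1+4\theta}/2$, and, granting your real-parameter claim, $a^+$ and $a^-$ have a common sign, say positive. If $\Re w^+\le0<\Re w^-$, you learn that $a^+\le|\eps|^{1+4\theta}/2$ and $|\Re w^+|\le|\eps|^{1+4\theta}/2$. But $\Re w^-$ is only known to lie within $|\eps|^{1+4\theta}/2$ of $a^-$, and you have no information about $a^-$. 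No ordering or symmetry argument rules out $a^+\approx0$ with $a^-$ of order one. Your sentence ``share a sign, unless both are $O(x^{1+4\theta})$-close to $0$'' is exactly the missing statement, and it is asserted rather than proved.

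The symmetry remark you give for the real case is also inaccurate. For real $c'$ the two marked points are both real, or both purely imaginary on one half-axis; they are never conjugates. Their images can never coincide, since $f_{c'}^{n_W}$ sends them to $\pm q_M$. The same-side property actually comes from $f_{c'}(W')\cap\R$ being an interval with endpoints $f_{c'}(v_{W'}^\pm)$ that omits $c'$, because $0\notin\hat W'$.

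The missing idea is Koebe space, Lemma~\ref{lem:UMext}. This is what the paper uses, and it applies directly at the parameter $c$. The map $f_c^{n_W-1}$ sends $f_c(\hat W)$ biholomorphically onto $U(c)$ and $f_c(W)$ onto $U_M(c)$. Hence $B(f_c(W),2\diam f_c(W))\subset f_c(\hat W)$, and $c\notin f_c(\hat W)$ because $0\notin\hat W$. The points $f_c(v_W^\pm)\in\overline{f_c(W)}$ lie on opposite sides of the vertical line through $c$, so $\diam f_c(W)\ge|\Re w^+|+|\Re w^-|$. Therefore $|w^\pm|\ge\mathrm{dist}(c,f_c(W))\ge 2\diam f_c(W)\ge 2|\Re w^\pm|$, that is, $|\Im w^\pm|\ge\sqrt3\,|\Re w^\pm|$, for both points at once. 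Combined with your Step~1 bound $|\Im w^\pm|<2x^{1+4\theta}$, this gives $|w^\pm|<\frac{4}{\sqrt3}x^{1+4\theta}<4x^{1+4\theta}$, and Step~3 finishes.

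This route needs no comparison with the real parameter $c'$. It also spares you from justifying that the moved points $h(c',f_c(v_W^\pm))$ are the marked points of a single real branch domain $W'$. If you prefer to keep your route, the same Koebe estimate applied at $c'$ would close it, since it forces $\max a^\pm\le\frac32\min a^\pm$. At that point, however, the holomorphic motion is doing no real work.
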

    \begin{proof}
        Let $s = |\Re(f_c(v_W)-c)|$. 
        We have $\diam(f_c(W)) \geq s$. Applying Lemma~\ref{lem:UMext}, 
        $$
        B(f_c(W), 2s) \subset f_c(\hat W)$$
        and so does not contain $c$, so $\dist{f_c(v_W), c} \geq 2s$. 
        Hence the imaginary part of $(f_c(v_W) - c)$ is at least $\sqrt{3}$ times the real part. 
        As 
        $$|\Im(f_c(v_W)) - \Im(c)| \leq x^{1+4\theta} + x^{1+5\theta}< 2x^{1+4\theta},$$ 
        we obtain that  $\dist{f_c(v_W), c}< 4x^{1+4\theta}.$
    \end{proof}

    \subsection{Linkage}
    As the boundary of $U_M(c)$ spirals in to $q_M$ when $c \notin \R$, we cannot just consider diameters of sets; the notion of \emph{linkage} becomes important. 
    In the inductive construction of $\vp$ we discarded some branches (using the restriction $\phi_1$), creating \emph{gaps}. 

    We say that a set $X$ of subsets of $\C$ $\kappa$-links points 
 $w, w'$ if there is a sequence $S_1,\ldots, S_k$ of elements of $X$ with
$$
\dist{w,S_1}+ \sum_{i=1}^{k-1} \dist{S_i, S_{i+1}})  + \dist{S_k, w'} < \kappa .$$
If it $\kappa$-links for all $\kappa>0$, then we say it $0$-links. 

If $G$ is a countable set of subsets of $\C$ and $X \cup G$ $0$-links $w,w'$ then we say that $X$ links $w,w'$ with \emph{gaps} $G$. 
We say the gaps $G$ sum to 
 $$\sum_{S\in G} \diam(S).$$
 If the gaps sum to less than $\kappa$ then $X$ $\kappa$-links $w,w'$.

 If $W$ is mapped by $f_c^{n_W}$ biholomorphically to a set $V$, we call $W$ a \emph{biholomorphic pullback} of $V$. If $V = U(c)$, there are two marked points in $\partial W$ mapped by $f^n_c$ to $\pm q$. If $V = U_M(c)$, there are two marked points mapped to $\pm q_M$. In either case, denote the set of these two marked points by $b(W)$. 

 If $Z$ is a set of biholomorphic pullbacks of $U(c)$ or of $U_M$, we define
 $$
 \hat b(Z) = \{b(W) : W \in Z\}. $$

 As a simple example worth considering, if $X$ is the set of branch domains of the first entry map $F_M$ to $U_M$, then $\hat b(X)$ 0-links $\pm q$.

 \begin{prop}\label{prop:linkage}
        Let $\theta \in (0, 1/12)$. 
     There is a constant $C>1$ such that,
        if $c+2 = \eps = x+iy$ with $0 \leq |y| < x^{1+5\theta}$,
     the branch domains of $\vp$ split into two sets $D, D'$ of branch domains where
     \begin{itemize}
         \item
             $\hat b(D)$ $C |\eps|^{1/2+2\theta}$-links $\pm q_M$;
         \item
             $\hat b(D')$ $C |\eps|^{1/2+2\theta}$-links the two points $f_c^{-1}(-p)$.
     \end{itemize}
 \end{prop}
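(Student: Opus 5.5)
The split is by the geometry of the last step of the induction. $D$ consists of the branch domains of $\vp=F_M\circ\phi_0$ lying over non-central branches, i.e. those whose first return to $U(c)$ does not pass through $V_*$. $D'$ consists of the branch domains contained in $V_*$, where $\vp=F_M\circ\phi_*$. The plan is to pull back the trivial linkage from the example preceding the proposition: the set $\hat b$ of branch domains of $F_M$ $0$-links $\pm q$ inside $U(c)$. We then track how much linkage is lost, that is, how large the gaps are.

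\textbf{The set $D$.} The branch domains of $\phi_0$ with return time different from $N+1,N+2$ are pulled back from $U(c)$ univalently, with extensions over $\hat U$ (Corollary~\ref{cor:extens}). Their marked points chain together from $-q$ towards $q$: adjacent return domains share boundary points, along the real direction for $\eps$ real, and in general through a holomorphic motion. Composing with $F_M$ inside each such domain, the $0$-linkage of $\pm q$ by the entry domains of $F_M$ pulls back to a $0$-linkage of the two marked points of the domain. Concatenating these gives a linkage of $\pm q_M$ by $\hat b(D)$. Strictly, we restrict to the part of $U$ between $\pm q_M$ and use the first entry to $U_M$ as the outermost layer.

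The gaps are the components of $V_*$ (with their deeper preimages) and the boundary-adjacent pieces already handled via $\phi_2$ in Lemma~\ref{lem:phi2}. By Lemma~\ref{lem:vwdist}, $f_c(V_*)$ lies in $-\zeta^n(B(0,1.1))\subset B(c,20\eps)$ for $n\ge N$. Taking square roots through the critical point, $V_*\subset B(0,C|\eps|^{1/2})$. To obtain the sharper bound $|\eps|^{1/2+2\theta}$, we must show that $V_*$ is not itself a gap: it is filled by $D'$ together with the $D$-domains accumulating on it. So $D$ links across the critical region with total gap controlled by Corollary~\ref{cor:bothsides}. Branches whose marked points $f_c(v_W^\pm)-c$ lie on opposite sides of the imaginary axis have $v_W^\pm\in B(0,2x^{1/2+2\theta})$. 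Hence a chain passing from left to right through the critical region loses at most $C|\eps|^{1/2+2\theta}$.

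\textbf{The set $D'$.} The images $f_c(V_*)$ are the pullbacks $-\zeta^k(U)$, $k\in\{N,N+1\}$, and they cover a neighbourhood of $c$ up to $-p$. The map $\phi_\infty$ was built by postcritical filling (\S\ref{sec:pcf}), replacing domains whose extension contains $\phi_0(0)$ by finer ones. Its domains, pushed by $\phi_0$, therefore $0$-link the endpoints except for the nested sequence of discarded domains around $\phi_0(0)$, and these have diameters $\lesssim 2^{-j}$ whose sum tends to zero. Pulling back by $f_c$, which is a square root near $c$, turns a chain in $B(c,20|\eps|)$ from $c$ to $-p$ into two symmetric chains from $0$ to the two points of $f_c^{-1}(-p)$. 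These join at $0$ with a gap at most $C\sqrt{\text{gap near }c}$.

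\textbf{Main obstacle.} The hardest step is the complex case, where the gap near $c$ must be shown to be $O(|\eps|^{1+4\theta})$ rather than $O(|\eps|)$. For this I would use Lemma~\ref{lem:branchboundary}: the marked points $f_c(v_W^\pm)$ lie within $x^{1+4\theta}$ of $\R$. Combined with the real picture, where these points link $c$ to $-p$ along the real axis exactly, this shows that the complex configuration deviates by at most $x^{1+4\theta}$. Taking square roots then yields gaps of size $C|\eps|^{1/2+2\theta}$ for both $D$ and $D'$.
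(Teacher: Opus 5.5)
\textbf{Verdict: genuine gap.} The partition you propose is the wrong one, and with it both bullets of the proposition fail.

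The domains of $\vp$ outside $V_*$ are not all horizontal. A branch $W$ of $\phi_0$ with return time $k+1\ge N+3$ has $f_c(W)=-\zeta^k(U)$ with $k\ge N+2$. That set lies between $-p$ and $c$, so $W$ sits along the imaginary axis. These vertical domains are exactly what joins $\pm w_*$, where $w_*=f_c^{-1}(-\zeta^{N+2}(q))$, to the two points $f_c^{-1}(-p)$. Conversely, $f_c(V_*)=-\zeta^N(U)\cup-\zeta^{N+1}(U)$ only stretches from $f_c(w_*)$ to $f_c(v_*)=-\zeta^N(q)$; it does not cover a neighbourhood of $c$ up to $-p$. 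The quantities $|v_*|$, $|w_*|$ and the distance from $\pm w_*$ to $f_c^{-1}(-p)$ are all comparable to $|\eps|^{1/2}$. So with your split, $\hat b(D)$ is left with the gap $\{\pm v_*\}$ of order $|\eps|^{1/2}$. This is the same order as the main term $2\sqrt{2/3}\,|\eps|^{1/2}$ that the proposition is meant to isolate. Your $\hat b(D')$ in turn misses the pieces between $\pm w_*$ and $f_c^{-1}(-p)$, again of order $|\eps|^{1/2}$. Saying that $V_*$ is ``filled by $D'$'' does not rescue $D$, since linkage by $\hat b(D)$ may only use elements of $D$.

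The missing idea is that the split must cut through $V_*$ at a pivot near $0$.
\begin{itemize}
\item First show that $X_2=\{f_c(b(W))\}$, over branch domains $W$ of $F_M\circ\phi_*$, links $f_c(v_*)$ to $f_c(w_*)$ with gaps of size $O(|\eps|^{3/2})$, at most $Cn$ of them at scale $e^{-n}$.
\item These two endpoints lie on opposite sides of the vertical line through $c$. Either a single $W$ straddles that line (Corollary~\ref{cor:bothsides}), or two elements on opposite sides are $O(|\eps|^{3/2})$ apart with imaginary parts within $2x^{1+4\theta}$ of $\Im c$ (Lemma~\ref{lem:branchboundary} together with $|y|<x^{1+5\theta}$). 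Either way one obtains a marked point $z_*$ with $|z_*|\le C|\eps|^{1/2+2\theta}$.
\item Let $D$ be the horizontal domains outside $V_*$ (return time at most $N$), together with the domains in $V_*$ whose $f_c$-marked points lie on the sub-chain from $f_c(v_*)$ to $f_c(z_*)$.
\item Let $D'$ be the vertical domains outside $V_*$, together with those on the sub-chain from $f_c(z_*)$ to $f_c(w_*)$.
\item The two $f_c$-preimages of each sub-chain join $v_*$ to $-v_*$ (resp.\ $w_*$ to $-w_*$), with the single extra gap $\{\pm z_*\}$.
\end{itemize}

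Your gap accounting also needs this structure. The relevant gaps are not just the nested domains around $\phi_0(0)$, whose diameters do not ``sum to zero''. They are all the branches discarded at every refinement step of \S\ref{sec:pcf}. After the square-root pullback these gaps have size $O(|\eps|^{3/4})$, with at most $Cn^2$ per scale $e^{-n}$. Their total is therefore $O(|\eps|^{2/3})=o(|\eps|^{1/2+2\theta})$, which is where $\theta<1/12$ enters.

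Similarly, a pointwise bound of $x^{1+4\theta}$ on how far the complex configuration deviates from the real one cannot by itself control a chain with unboundedly many links. Linkage has to come from pulling back chains. The imaginary-part estimate is used only once, to locate $z_*$.
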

 
 \begin{proof}
Recall that $D_1$ is the set of branch domains of $\phi_1$, the first return map to $U(c)$ restricted to domains with return time bounded by $N$. 
Let 
$$\pm v_* = f_c^{-1}(-\zeta^N(q)) \text{ and } \pm w_* = 
f_c^{-1}(-\zeta^{N+2}(q)).$$
     By pulling back the estimates of Lemma~\ref{lem:vwdist},  $|v_*|, |w_*| \leq 10 |\eps|^{1/2}$. 
Then $\hat b(D_1)$ links $\pm q$ with a gap set $G = \{ \pm v_*\}$. 

As an aside, if $D_0$ is the collection of branch domains of $\vp$ outside $V_*$, then
     $\hat b(D_0) \cup \{\{\pm v_*\}\}$ $0$-links $\pm q_M$, while $\hat b(D_0) \cup \{\{\pm w_*\}\}$  $0$-links the two points $f^{-1}_c(-p)$. We need to study the domains inside $V_*$.

The set $D_2$
of branch domains of $\phi_2$, 
defined in~\S\ref{sec:initin},
is a little more complicated. From the construction, discussed in the proof of Lemma~\ref{lem:phi2}, we control how many gaps we create at each scale. The set $\hat b(D_2)$ links $\pm q$ with a gap set $G$ for which 
$$
\sup_{S \in G} \diam(S) < 10|\eps|^{1/2}$$
     and  
$$\# \{ S \in G : \diam(S) \in [e^{-n-1}, e^{-n}] \} \leq 4.$$

Now we estimate the gaps created in~\S\ref{sec:pcf}.
One action was, in $Z_j$, to replace $W$ with $W(D_1)$. By bounded distortion, $\hat b(W(D_1))$ links $b(W)$ with a gap set $G$ consisting of one element of diameter bounded by $C \diam(W) |\eps|^{1/2}.$ 
Another was to replace $W'$ with $W'(D_2)$. Similarly, 
$\hat b(W'(D_2))$ links $b(W')$ with a gap set $G$ for which
$$
\sup_{S \in G} \diam(S) < C\diam(W')|\eps|^{1/2}$$
and
$$\# \{ S \in G : \diam(S) \in [e^{-n-1}, e^{-n}] \} \leq C.$$

We obtained the map $\phi_\infty$ with branch domains $Z_\infty$; we estimate $\hat b(Z_\infty)$ links $\pm q$ with a gap set $G$ for which
$$
\sup_{S \in G} \diam(S) < 10|\eps|^{1/2}$$
and 
$$\# \{ S \in G : \diam(S) \in [e^{-n-1}, e^{-n}] \} \leq Cn.$$

If $X_1$ is the set of branch domains of $F_M \circ \phi_\infty$, then $\hat b(X_1)$ links $\pm q$ with the same gap set $G$, since $\hat b(X)$ $0$-linked $\pm q$, our first example. 

Let $X_2 = \{f_c(b(W)) : W \text{ is a branch domain of } F_M\circ \phi_*\}.$
Then $X_2$ links $f_c(v_*), f_c(w_*)$ with a gap set $G$ for which
$$
\sup_{S \in G} \diam(S) ~\lesssim~ |\eps|^{3/2}$$
and
$$\# \{ S \in G : \diam(S) \in [e^{-n-1}, e^{-n}] \} \leq Cn.$$
We claim that there is an element  $S \in X_2$ with $\dist{S,c} < C x^{1+4\theta}$. Indeed, if $f_c(b(W)) -c$ is on alternate sides of the imaginary axis for some branch domain $W$ of $F_M \circ \phi_*$, then apply Corollary~\ref{cor:bothsides}. 
Otherwise, let $W,W'$ be domains with $b(f_c(W))$, $b(f_c(W'))$ on opposite sides of the vertical line passing through $c$, which minimise 
$$\dist{b(f_c(W)), b(f_c(W'))} ~\lesssim~ |\eps|^{3/2}.$$
   By Lemma~\ref{lem:branchboundary}, the imaginary parts of $b(f_c(W)), b(f_c(W'))$ are small. 
   Hence 
   $$
   \dist{b(f_c(W)), c)} ~\lesssim~ |\eps|^{1+4\theta}.$$
   This shows the claim.  

   In particular, there is a point  $z_* \in b(W)$, $W$ a domain of $\vp$, with $|z_*| ~\lesssim~|\eps|^{1/2+2\theta}.$

   We divide $X_2$ and $G$ into two sets each, $X_3$ linking $f_c(v_*), f_c(z_*)$ and $X_4$ linking $f_c(z_*), f_c(w_*)$ with corresponding gap sets. 

   Let $Y_k = \{b(W) : b(f_c(W)) \in X_k\}$, $k = 3,4$. Then $Y_3$ links $\pm v_*$ with a gap set $G'$ we estimate as follows, one element being $\{\pm z_*\} \in G'$. 
$$
\sup_{S \in G'} \diam(S) ~\lesssim |\eps|^{3/4}$$
and (similarly to the proof of Lemma~\ref{lem:startails})
$$\# \{ S \in G' : \diam(S) \in [e^{-n-1}, e^{-n}] \} \leq Cn^2.$$
Hence
the gaps $G'$ sum to 
$$~\lesssim~|\eps|^{2/3} + 2|z_*|  ~\lesssim~ |\eps|^{1/2+2\theta},$$ so 
$Y_3$ $C|\eps|^{1/2 +2\theta}$-links $\pm v_*$. 

Similarly, $Y_4$ 
 $C|\eps|^{1/2 +2\theta}$-links $\pm w_*$. 
 \end{proof}
 
 Note that if $f^{n_W}_c$ maps $W$ to $U_M$ with distortion bounded by $\Delta>1$, then $\hat b(W(D))$ 
 $$\Delta C |\eps|^{1/2+2\theta}\frac{|r-s|}{2|q_M|}\text{-links}$$ the two points $r,s \in b(W)$.
 Therefore, 
 \begin{equation}\label{eqn:DaW}
 \sum_{S \in \hat b(W(D))} \diam(S) \geq |r-s|\left(1 - 
 \Delta C |\eps|^{1/2+2\theta}\frac{1}{2|q_M|}
 \right). 
 \end{equation}
 Meanwhile
 \begin{equation}\label{eqn:DbW}
 \sum_{S \in \hat b(W(D'))} \diam(S) \geq \Delta^{-1}\frac{|r-s|}{2|q_M|} \sum_{S \in \hat b(D')} \diam(S). 
 \end{equation}

\section{Pressure and dimension for parameters in the cusp}\label{sec:pres}

Given $M\geq 1$, let $\vp = \vp_\eps$ be the map defined (for small $\eps$) in \S\ref{sec:definevp}, $c+2 = \eps = x+iy$, with $x,y \in \R$. 
As $U_M = U_M(c)$ is compactly contained in $U(c)$ and $\vp$, with range $U_M$, is extensible over $U(c)$, there is a uniform distortion bound $\Delta = \Delta(M)>1$ for iterates of $\vp$. 
When $M$ is large, $U_M$ is small, and we can take $\Delta$ close to $1$. 

Each map $\vp$ has a corresponding pressure function $P(\cdot) = P_\eps(\cdot)$. 
In this section we complete the proof of Theorem~\ref{theo:optimal}. Our next step is to prove the following. 
\begin{prop} \label{prop:pressest}
    Let $\hat \theta>0$.
    There is a function $\sigma : \N \times \R^+ \to \R$ with 
    $$
    \lim_{M\to \infty}\lim_{x\to 0} \sigma(M,x) = 0$$
    for which the pressure function satisfies
    $$
    P_\eps(1) \geq \sqrt{\frac23}\frac{1}{|q_M(c)|} \sqrt{|\eps|} (1 + \sigma(M, |\eps|)),$$
    provided $|y| \leq x^{1+\hat \theta}$. 
\end{prop}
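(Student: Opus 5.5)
We bound $P_\eps(1)$ from below with the binary tree decomposition of Proposition~\ref{propBTD} at $t=1$. The branch domains of $\vp$ split as in Proposition~\ref{prop:linkage} into the \emph{horizontal} collection $D$, which $C|\eps|^{1/2+2\theta}$-links $\pm q_M$, and the \emph{transverse} collection $D'$, which $C|\eps|^{1/2+2\theta}$-links the two points $f_c^{-1}(-p)$. We fix $\theta\in(0,1/12)$ with $5\theta\le\hat\theta$, so that $|y|\le x^{1+\hat\theta}\le x^{1+5\theta}$ and Proposition~\ref{prop:linkage} applies. We set $D_a$ to be the union of the domains in $D$ and $D_b$ the union of those in $D'$, and we aim for $\alpha_1 \geq 1 - o(1)\cdot|\eps|^{1/2}$ and $\beta_1 \geq \frac{1}{2|q_M|}2\sqrt{2|\eps|/3}\,(1+o(1))$, with constant $C$ close to $1$. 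The lower bound of Proposition~\ref{propBTD} then gives $P_\eps(1)\ge \log(\alpha_1+C^{-1}\beta_1)$, which is what we want once $\log(1+u)\approx u$.

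\emph{The estimate for $Q_b$.} For $z_0\in U_M$ and a branch $W\in D'$, bounded distortion $\Delta=\Delta(M)$ gives $|D\vp(z)|^{-1}\ge \Delta^{-1}\diam(b(W))/(2|q_M|)$ for the preimage $z\in W$, since $\vp$ maps $b(W)$ to $\pm q_M$. Linkage lets us compare $\sum_{W\in D'}\diam(b(W))$ with the distance between the two points of $f_c^{-1}(-p)$, which by \eqref{eqn:pest} is $2\sqrt{2|\eps|/3}(1+O(\eps))$, up to an additive error $C|\eps|^{1/2+2\theta}=o(|\eps|^{1/2})$. The triangle inequality along the linking chain gives $\sum\diam(b(W))\ge |w-w'|-\kappa$. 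Hence
$$\underline Q_b(1,1)\ge \Delta^{-1}\frac{1}{|q_M|}\sqrt{2|\eps|/3}\,\bigl(1-C|\eps|^{2\theta}\bigr).$$

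\emph{The estimate for $Q_a$.} Here we cannot use Lemma~\ref{lemQa} directly, since $\vp$ is not real. Instead we iterate \eqref{eqn:DaW}. For each $j$-fold composition branch $W$ of $\vp_a$ with marked points $r,s$, the $D$-children satisfy $\sum \diam(b(\cdot))\ge |r-s|(1-\Delta C|\eps|^{1/2+2\theta}/(2|q_M|))$. Inducting in $j$ from $|q_M-(-q_M)|=2|q_M|$ gives
$$\sum_{W}\diam(b(W))\ge 2|q_M|\bigl(1-\Delta C|\eps|^{1/2+2\theta}/(2|q_M|)\bigr)^j .$$
Dividing by $2|q_M|$ and using distortion once more (the constant $\Delta$ enters only once, because distortion of $\vp^j$ is uniformly bounded by extensibility over $U(c)$) yields
$$\underline Q_a(j,1)\ge \Delta^{-1}\alpha_1^j,\qquad \alpha_1=1-\Delta C|\eps|^{1/2+2\theta}/(2|q_M|).$$
This is the place where care is needed, and it is the main obstacle. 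The marked points of children must genuinely realise the linking of $r,s$, which requires pulling back the linkage statement through the univalent branch with distortion $\Delta$. The relevant notion is diameters of the pairs $b(\cdot)$, not of the domains. This is exactly what the remark preceding \eqref{eqn:DaW} records, so we rely on it and on the fact that $\diam(b(W))$ and $|D\vp|^{-1}$ at any point of $W$ are comparable up to $\Delta$.

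\emph{Conclusion.} Proposition~\ref{propBTD} with $C=\Delta$ gives
$$P_\eps(1)\ge\log\Bigl(1-\tfrac{\Delta C}{2|q_M|}|\eps|^{1/2+2\theta}+\Delta^{-2}\tfrac{1}{|q_M|}\sqrt{2|\eps|/3}\,\bigl(1-C|\eps|^{2\theta}\bigr)\Bigr).$$
The negative term is $o(|\eps|^{1/2})$ for fixed $M$, and $\log(1+u)=u(1+O(u))$. So we can take
$$\sigma(M,|\eps|)=\Delta(M)^{-2}-1+O_M\bigl(|\eps|^{2\theta}\bigr).$$
As $|\eps|\to0$ the error term disappears, and $\Delta(M)\to1$ as $M\to\infty$ because $U_M$ shrinks inside the fixed extension domain $U(c)$. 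This gives $\lim_M\lim_x\sigma=0$. The hypothesis is stated with $x\to0$ rather than $|\eps|\to0$, but these are equivalent in the cusp, where $|\eps|\sim x$.
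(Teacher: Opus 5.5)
Your proof is correct and follows essentially the paper's route: you split the branches via Proposition~\ref{prop:linkage}, get the transverse mass $2\sqrt{2/3}\,|\eps|^{1/2}-C|\eps|^{1/2+2\theta}$ from linkage, and iterate \eqref{eqn:DaW} and \eqref{eqn:DbW} with the uniform distortion bound $\Delta(M)$; the paper expands the sum over branches of $\vp^n$ directly instead of invoking Proposition~\ref{propBTD}, but this is the same binary-tree computation. Two small fixes: take $5\theta<\hat\theta$ strictly, to match the strict hypothesis $|y|<x^{1+5\theta}$ of Proposition~\ref{prop:linkage}, and note that the lower bound of Proposition~\ref{propBTD} uses only the lower-bound half of its hypotheses, so you need not establish the upper bounds.
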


\begin{proof}
    Recall that $p(c) + c = \frac23\eps + O(\eps^2)$, so 
    $$|f^{-1}_c(-p)| = \sqrt{\frac23} |\eps|^{1/2} + O(\eps).$$
    Split the branch domains of $\vp$ into $D, D'$ as per Proposition~\ref{prop:linkage}.
    $$
  \sum_{S \in \hat b(D')} \diam(S) \geq 
    2 \sqrt{\frac23} |\eps|^{1/2} - C|\eps|^{1/2+2\theta}.$$
   The constant $C$ depends on $M$, but this term has higher order in $|\eps|$ and will become negligible. 
    Applying~\eqref{eqn:DaW} and~\eqref{eqn:DbW} inductively, and replacing $C$ by a larger constant if necessary,
    if $D_n$ is the collection of branch domains of $\vp^n$, 
    $$
    \sum_{S\in \hat b(D_n)} \diam(S) \geq 
    2|q_M| 
 \left(1 - 
  C |\eps|^{1/2+2\theta} + 
 \Delta^{-1}\frac{1}{2|q_M|} 
    2 \sqrt{\frac23} |\eps|^{1/2} 
    \right)^n,$$
    where $\Delta$ is the distortion bound.
    Hence
    $$
    \lim_{n\to\infty} 
    \frac1n \log \sum_{W\in D_n} \diam(W) \geq \Delta^{-2}\frac{1}{|q_M|}\sqrt{\frac23}|\eps|^{1/2}$$
    for small $|\eps|$.
\end{proof}

The map $\vp$ coincides with the first return map to $U_M$ outside of $V_* \subset B(0, 10\sqrt{|\eps|})$
, so the generalised Cantor repeller converges to the first return map for $f_{-2}$ to $U_M(-2)$ as $\eps \to 0$. 

Choose $0 < R_M < R < 1$  so that $B(0,R) \subset U$ and $B(0, R_M) \supset U_M$, $R_M \to 0$ as $M \to \infty$. 
Apply Lemma~\ref{lemPslope} with $r = R_M/R$ to obtain a neighbourhood of $t=1$ on which
        $$\left|\log \frac{P_\eps'(t)}{ P_0'(1)}\right| < 10\log \Delta_{r},$$
        for all small $\eps$. 
        By Lemma~\ref{lemPconverges}, $P_\eps(1)$ converges to $P_0(1)=0$ (this, together with the slope estimate,  guarantees that $P_\eps^{-1}(0)$ lies within our neighbourhood of $t=1$, for all small $\eps$). 

        Hence, 
        $$
        P_\eps^{-1}(0) - 1 \geq  - \frac{P_\eps(1)}{P_0'(1)}(1 -11\log\Delta_r).$$
        We estimate $P_0'(1)$ by~\eqref{eqn:LE2} and $P_\eps(1)$ by
         Proposition~\ref{prop:pressest}, obtaining
        \begin{eqnarray*}
        P_\eps^{-1}(0) - 1 
            & \geq &
     \sqrt{\frac23}\frac{1}{|q_M(c)|} \sqrt{|\eps|} (1 + \sigma(M, |\eps|))\frac{q_M(-2)}{\pi \log 2}(1-12\log\Delta_r) \\
            &
            = & \Omega \sqrt{|\eps|} \frac{q_M(-2)}{|q_M(c)|} (1 + \sigma(M, |\eps|))(1-12\log\Delta_r) 
        \end{eqnarray*}
     for all small $\eps$, with $c$ within the cusp,  
    where $\Omega = \sqrt{\frac{2}3} \frac{1}{\pi \log2}$. We have that 
    $$
    \lim_{M\to \infty}\lim_{\eps\to 0}
             \frac{q_M(-2)}{|q_M(c)|} (1 + \sigma(M, |\eps|))(1-12\log\Delta_r) = 1.$$

    Let $\Lambda := \bigcap_{n\geq0} \vp^{-n}(U_M(c))$. 
    Recall that $\HD(\Lambda) = P_\eps^{-1}(0)$ and $\Lambda$ is a subset of the Julia set $\J_c$. 

    We obtain, for $\theta>0$ and parameters $c = -2+x+iy$, $0 \leq |y| < x^{1+\theta}$, 
    $$
    \liminf_{c \to -2}
    \frac{
        \HD(\J_c) -1}{\Omega \sqrt{\eps}} \geq 1.
        $$
        Together with the upper bound Theorem~\ref{theo:upper}, this completes the proof of Theorem~\ref{theo:optimal}.

\bibliography{references}
\bibliographystyle{plain}

\end{document}

%% file: largescale.pdf_tex
\begingroup%
  \makeatletter%
  \providecommand\color[2][]{%
    \errmessage{(Inkscape) Color is used for the text in Inkscape, but the package 'color.sty' is not loaded}%
    \renewcommand\color[2][]{}%
  }%
  \providecommand\transparent[1]{%
    \errmessage{(Inkscape) Transparency is used (non-zero) for the text in Inkscape, but the package 'transparent.sty' is not loaded}%
    \renewcommand\transparent[1]{}%
  }%
  \providecommand\rotatebox[2]{#2}%
  \newcommand*\fsize{\dimexpr\f@size pt\relax}%
  \newcommand*\lineheight[1]{\fontsize{\fsize}{#1\fsize}\selectfont}%
  \ifx\svgwidth\undefined%
    \setlength{\unitlength}{564.84921433bp}%
    \ifx\svgscale\undefined%
      \relax%
    \else%
      \setlength{\unitlength}{\unitlength * \real{\svgscale}}%
    \fi%
  \else%
    \setlength{\unitlength}{\svgwidth}%
  \fi%
  \global\let\svgwidth\undefined%
  \global\let\svgscale\undefined%
  \makeatother%
  \begin{picture}(1,0.33682429)%
    \lineheight{1}%
    \setlength\tabcolsep{0pt}%
    \put(0,0){\includegraphics[width=\unitlength,page=1]{largescale.pdf}}%
    \put(0.00059072,0.11159591){\color[rgb]{0.10196078,0.10196078,0.10196078}\makebox(0,0)[lt]{\lineheight{1.25}\smash{\begin{tabular}[t]{l}$-2$\end{tabular}}}}%
    \put(0.06033249,0.1102913){\color[rgb]{0.10196078,0.10196078,0.10196078}\makebox(0,0)[lt]{\lineheight{1.25}\smash{\begin{tabular}[t]{l}$-p$\end{tabular}}}}%
    \put(0.13210956,0.11159629){\color[rgb]{0.10196078,0.10196078,0.10196078}\makebox(0,0)[lt]{\lineheight{1.25}\smash{\begin{tabular}[t]{l}$c$\end{tabular}}}}%
    \put(0.26667279,0.31909687){\color[rgb]{0.10196078,0.10196078,0.10196078}\makebox(0,0)[lt]{\lineheight{1.25}\smash{\begin{tabular}[t]{l}$U$\end{tabular}}}}%
    \put(0.39181207,0.21817444){\color[rgb]{0.10196078,0.10196078,0.10196078}\makebox(0,0)[lt]{\lineheight{1.25}\smash{\begin{tabular}[t]{l}$U_M$\end{tabular}}}}%
    \put(0.46445904,0.12276145){\color[rgb]{0.10196078,0.10196078,0.10196078}\makebox(0,0)[lt]{\lineheight{1.25}\smash{\begin{tabular}[t]{l}$0$\end{tabular}}}}%
    \put(0.50912008,0.12769187){\color[rgb]{0.10196078,0.10196078,0.10196078}\makebox(0,0)[lt]{\lineheight{1.25}\smash{\begin{tabular}[t]{l}$q_M$\end{tabular}}}}%
    \put(0.64136401,0.12493679){\color[rgb]{0.10196078,0.10196078,0.10196078}\makebox(0,0)[lt]{\lineheight{1.25}\smash{\begin{tabular}[t]{l}$-q$\end{tabular}}}}%
    \put(0.75983278,0.24514518){\color[rgb]{0.10196078,0.10196078,0.10196078}\makebox(0,0)[lt]{\lineheight{1.25}\smash{\begin{tabular}[t]{l}$\zeta(U)$\end{tabular}}}}%
    \put(0.87859115,0.18163336){\color[rgb]{0.10196078,0.10196078,0.10196078}\makebox(0,0)[lt]{\lineheight{1.25}\smash{\begin{tabular}[t]{l}$\zeta^n(U)$\end{tabular}}}}%
    \put(0.92774763,0.11986156){\color[rgb]{0.10196078,0.10196078,0.10196078}\makebox(0,0)[lt]{\lineheight{1.25}\smash{\begin{tabular}[t]{l}$p$\end{tabular}}}}%
    \put(0.26477381,0.12493679){\color[rgb]{0.10196078,0.10196078,0.10196078}\makebox(0,0)[lt]{\lineheight{1.25}\smash{\begin{tabular}[t]{l}$q$\end{tabular}}}}%
  \end{picture}%
\endgroup%

%% file: UMphi.pdf_tex
\begingroup%
  \makeatletter%
  \providecommand\color[2][]{%
    \errmessage{(Inkscape) Color is used for the text in Inkscape, but the package 'color.sty' is not loaded}%
    \renewcommand\color[2][]{}%
  }%
  \providecommand\transparent[1]{%
    \errmessage{(Inkscape) Transparency is used (non-zero) for the text in Inkscape, but the package 'transparent.sty' is not loaded}%
    \renewcommand\transparent[1]{}%
  }%
  \providecommand\rotatebox[2]{#2}%
  \newcommand*\fsize{\dimexpr\f@size pt\relax}%
  \newcommand*\lineheight[1]{\fontsize{\fsize}{#1\fsize}\selectfont}%
  \ifx\svgwidth\undefined%
    \setlength{\unitlength}{503.11037559bp}%
    \ifx\svgscale\undefined%
      \relax%
    \else%
      \setlength{\unitlength}{\unitlength * \real{\svgscale}}%
    \fi%
  \else%
    \setlength{\unitlength}{\svgwidth}%
  \fi%
  \global\let\svgwidth\undefined%
  \global\let\svgscale\undefined%
  \makeatother%
  \begin{picture}(1,0.55088485)%
    \lineheight{1}%
    \setlength\tabcolsep{0pt}%
    \put(0,0){\includegraphics[width=\unitlength,page=1]{UMphi.pdf}}%
    \put(0.1575816,0.50898958){\color[rgb]{0.10196078,0.10196078,0.10196078}\makebox(0,0)[lt]{\lineheight{1.25}\smash{\begin{tabular}[t]{l}$U_M$\end{tabular}}}}%
    \put(0,0){\includegraphics[width=\unitlength,page=2]{UMphi.pdf}}%
    \put(0.52936081,0.43945912){\color[rgb]{0.10196078,0.10196078,0.10196078}\makebox(0,0)[lt]{\lineheight{1.25}\smash{\begin{tabular}[t]{l}$f_c^{-1}(-p)$\\\end{tabular}}}}%
    \put(0.49459058,0.08040325){\color[rgb]{0.10196078,0.10196078,0.10196078}\makebox(0,0)[lt]{\lineheight{1.25}\smash{\begin{tabular}[t]{l}$f_c^{-1}(-p)$\\\end{tabular}}}}%
    \put(0,0){\includegraphics[width=\unitlength,page=3]{UMphi.pdf}}%
    \put(0.47472187,0.23225694){\color[rgb]{0.10196078,0.10196078,0.10196078}\makebox(0,0)[lt]{\lineheight{1.25}\smash{\begin{tabular}[t]{l}0\end{tabular}}}}%
    \put(0,0){\includegraphics[width=\unitlength,page=4]{UMphi.pdf}}%
    \put(0.82090907,0.53098199){\color[rgb]{0.10196078,0.10196078,0.10196078}\makebox(0,0)[lt]{\lineheight{1.25}\smash{\begin{tabular}[t]{l}$\vp$\end{tabular}}}}%
    \put(0.00066319,0.2445735){\color[rgb]{0.10196078,0.10196078,0.10196078}\makebox(0,0)[lt]{\lineheight{1.25}\smash{\begin{tabular}[t]{l}$-q_M$\end{tabular}}}}%
    \put(0.92657079,0.25746036){\color[rgb]{0.10196078,0.10196078,0.10196078}\makebox(0,0)[lt]{\lineheight{1.25}\smash{\begin{tabular}[t]{l}$q_M$\end{tabular}}}}%
  \end{picture}%
\endgroup%